\newtheorem{theorem}{Theorem}
\newtheorem{proposition}{Proposition}
\newtheorem{lemma}{Lemma}
\newtheorem{corollary}{Corollary}
\newtheorem{conjecture}{Conjecture}
\newtheorem{proof}{Proof}
\newtheorem{definition}{Definition}
\newtheorem{remark}{Remark}
\newtheorem{example}{Example}
\newtheorem{problem}{}
\numberwithin{equation}{section}
\newcounter{aa}
\let\mathcal=\mathscr
\begin{document}

\author{Anatoly~M.~Vershik\footnote{St.~Petersburg Department
of Steklov Mathematical Institute;
St.~Petersburg State University
Institute for Information Transmission Problems.
E-mail: avershik@gmail.com. Partially supported by the Russian Science Foundation
 (grant No.~14-11-00581).}}

%\udk{517.518}
\date{24.01.2017}

%\doi{10.4213/rm9763}

\title{The theory of filtrations of subalgebras, standardness and independence}

\maketitle

\begin{flushright}{In memory of my friend Kolya K{\rm.} {\rm (1933--2014)}\\[2mm]
``\textit{Independence is the best quality,\\
the best word in all languages}.''\\[0.3mm]
{\fontsize{8}{9pt}\selectfont {\rm J.~Brodsky. From a~personal letter.}}}
\end{flushright}

\begin{abstract}
The survey is devoted to the combinatorial and metric theory of filtrations, i.\,e., decreasing sequences of $\sigma$-algebras in measure spaces or  decreasing sequences of subalgebras of certain algebras. One of the key notions, that of standardness, plays the role of a~generalization of the notion of the independence of a~sequence of random variables. We discuss the possibility of obtaining a~classification of filtrations, their invariants, and various links to problems in algebra, dynamics, and combinatorics.

Bibliography: 101 titles.
\end{abstract}

%\begin{keywords}
%filtrations, $\sigma$-algebras, independence, standardness, graded graphs, central measures.
%\end{keywords}

\setcounter{tocdepth}{2} \tableofcontents
%---------------------------------------------------

\section{Introduction}
\label{sec1}

This survey deals with an area in which measure theory is intertwined with combinatorics and asymptotic analysis; its applications to dynamics, algebra, and other problems will be partly touched upon in this paper, as well as in subsequent publications. The survey contains a~lot of new problems from the dynamic theory of graphs and representation theory of groups and algebras.

\subsection{A~simple example and a~difficult question}

\label{ssec1.1} We begin with an elementary example that illustrates the notion of filtration and  problems of combinatorial measure theory.

Consider the space of all one-sided sequences of zeros and ones:
$$
X=\bigl\{\{x_n\}_{n=1}^{\infty}\bigr\},\qquad
x_n=0 \vee 1,\quad
n=1,2,\dots,
$$
i.\,e., the infinite product of the two-point space. We will regard~$X$ as a~dyadic (Cantor-like) compactum in the weak topology, and also as a~Borel space. In~$X$ we introduce the ``tail''  equivalence relation and the tail filtration of $\sigma$-subalgebras of sets. Namely, two sequences~$\{x_k\}$,~$\{x'_k\}$ are

-- $n$-equivalent if $x_{k+n}=x'_{k+n}$ for all $k\geqslant 0$; and

-- equivalent with respect to the tail equivalence relation if they are $n$-equivalent for some~$n$.

Denote by~${\mathfrak A}_n$, $n=0,1,2,\dots$,  the $\sigma$-algebra of Borel subsets in~$X$ that along with every point~$x$ contain all points $n$-equivalent to~$x$. In other words, ${\mathfrak A}_n$,
$n=0,1,2,\dots$, is the $\sigma$-subalgebra of Borel sets determined by conditions on the coordinates with indices~$\ge n$. The decreasing sequence
$$
{\mathfrak A}_0 \supset {\mathfrak A}_1 \supset {\mathfrak A}_2\supset \cdots
$$
is called the \textit{tail Borel filtration of the space~$X$
regarded as an infinite direct product},
$$
X=\prod_{n=1}^{\infty}[0;1].
$$

If we endow the space~$X$ with an arbitrary Borel measure~$\mu$, then the same sequence of $\sigma$-subalgebras, which are now understood as $\sigma$-subalgebras of classes of sets coinciding  $\operatorname{mod} 0$ (with respect to the given measure~$\mu$), provides an example of a~filtration in a~standard measure space.

For instance, take~$\mu$ to be the Bernoulli measure equal to the infinite product of the measures
 $\theta=(1/2,1/2)$; the resulting filtration is called the
 \textit{dyadic Bernoulli filtration}. By the famous Kolmogorov's zero--one (``all-or-none'') law, the intersection $\bigcap\limits_n{\mathfrak A}_n$
is the trivial $\sigma$-algebra~$\mathfrak N$, which contains only two classes of sets, of measure either zero or one.

For every positive integer~$n$, the space $(X,\mu)$ can be represented as a~direct product of measure spaces:
$$
(X,\mu)=\biggl(\,\prod_1^n\{(0;1),\theta\}\biggr) \times (X_n,\mu_n),
$$
where $(X_n,\mu_n)=\displaystyle\prod_{n+1}^{\infty}\{(0;1),\theta\}$.

Now we are going to state the main problem. Assume that we have a~dyadic compactum~$X'$ with a~Borel probability measure~$\mu'$ satisfying Kolmogorov's zero--one law, and for every positive integer~$n$ there is an isomorphism of measure spaces
$$
(X',\mu') \simeq \biggl(\,\prod_1^n\{(0;1),\theta\}\biggr) \times
(X'_n,\mu'_n),
$$
where $(X'_n,\mu'_n)$ are some measure spaces.

\textit{Can we claim that there exists an isomorphism~$T$ of measure spaces,
$T(X',\mu')=(X,\mu)$, for which $T(X'_n,\mu')=(X_n,\mu)$ for all~$n$?}

In the language to be described below, this is the question of whether a~finitely Bernoulli dyadic ergodic filtration is unique up to isomorphism.

It is similar to the following question from the theory of infinite tensor products of $C^*$-algebras and perhaps looks even more natural.

Consider a~$C^*$-algebra~$\mathscr A$ and assume that there is a~decreasing sequence
${\mathscr A}_n \subset \mathscr A$ of $C^*$-subalgebras of~$\mathscr A$ whose intersection is the space of constants,
$\bigcap\limits_n {\mathscr A}_n=\{\operatorname{Const}\}$, such that for all~$n$
$$
{\mathscr A}\simeq M_2(\mathbb C)^{{\otimes}n}\otimes{\mathscr A}_n
$$
(here $\simeq$ means an isomorphism of $C^*$-algebras).

\textit{Is it true that there exists an isomorphism}
$$
{\mathscr A}\simeq\prod_{1}^{\infty}
{\vphantom{\prod}}^{\!\otimes} M_2(\mathbb C)
$$
\textit{that sends~${\mathscr
A}_n$ to the subalgebra
$\displaystyle\prod_{n+1}^{\infty}{\vphantom{\prod}}^{\!\otimes}
M_2(\mathbb C)\subset {\mathscr A}$ for all~$n$?}

These problems were raised by the author about fifty years ago. In the original terms, the first of them looked as follows: is it true that every  filtration such that its finite segments are isomorphic to finite segments of a~Bernoulli filtration and the intersection of all its $\sigma$-algebras is trivial is isomorphic to a~Bernoulli filtration? The problem originated from ergodic theory, as we will discuss  below. In~\cite{59}, \textit{both questions were answered in the negative}; moreover, it turned out that there is a~continuum of measure spaces (respectively, $C^*$-algebras) with  dyadic structures that are finitely isomorphic but pairwise nonisomorphic globally. This discovery launched an important field, which, however, still has not gained sufficient attention: the asymptotic theory of filtrations. For this survey, we have selected the most important facts, both known for a~long time and new, and, most importantly, new problems in this area. The theory of filtrations has applications to measure theory, random processes, and dynamical systems, as well as to representation theory of groups and algebras. The main problem concerns the complexity of the asymptotic behavior at infinity of monotone sequences of algebras and a~possible classification of such sequences. The second question, about tensor products of
$C^*$-algebras, will be considered in another paper; here we have mentioned it to emphasize the parallelism between problems from very different areas of mathematics. Both these problems belong to asymptotic algebraic analysis.

Conceptually new ideas, as compared with the previous research on filtrations, appeared in connection with the theory of graded graphs (Bratteli diagrams); this is one of the central topics of this survey. Apparently, this connection has not been noticed earlier, though  the tail filtration is a~most important object in the theory of  $\operatorname{AF}$-algebras. From the standpoint of the theory of graded graphs, filtrations were considered in the author's recent paper~\cite{82}; here, on the contrary, we focus on the measure-theoretic
aspect of the problem and use notions and techniques  related to graphs.

\subsection{Three languages of measure theory}
\label{ssec1.2}
Let us return to the measure-the\-o\-re\-tic statement of the first problem.
If we consider the algebra~$L^{\infty}(X,\mu)$ of all classes of measurable bounded functions on the space
$(X,\mu)$ coinciding $\operatorname{mod} 0$, we obtain a~filtration of subalgebras:
$$
L^{\infty}(X,\mu) \equiv{\mathscr A}_0\supset {\mathscr A}_1 \supset
{\mathscr A}_2 \supset \cdots,
$$
where~${\mathscr A}_n$ is the subalgebra in~$L^{\infty}(X,\mu)$ consisting of all functions that depend on coordinates with indices~$\ge n$ ($n=0,1,2,\dots$).

Now let us describe this example in the language of partitions. Let~$\xi_n$ be the partition of
$(X,\mu)$ into the classes of sequences in which the coordinates with indices greater than~$n$ coincide; then ${\mathfrak A}_n$ is the $\sigma$-algebra of sets measurable with respect to the partition~$\xi_n$ (i.\,e., sets composed of elements of this partition). And ${\mathscr A}_n$ is the space of functions measurable with respect to~$\xi_n$. The fact that the sequence of partitions is decreasing means that almost every element of~$\xi_n$ is a~union of some elements of~$\xi_{n-1}$, $n=1,2,\dots$\,. (The partial order in the space of measurable partitions is discussed in Sec.~\ref{ssec2.2}.)

In these terms, ${\mathscr A}_n$ is the algebra~$L^{\infty}(X/{\xi_n},\mu_n)$, where $X/{\xi_n}$ is the quotient of~$(X,\mu)$ by~$\xi_n$ and
$\mu_{\xi_n}$ is the quotient measure on~ $X/{\xi_n}$.

Thus, there is a~functorial equivalence of the three languages of the theory of filtrations described above: the language of filtrations of $\sigma$-subalgebras of a~measure space, the language of filtrations of subspaces of measurable functions, and the language of filtrations of measurable partitions (precise definitions will be given below).

By filtrations we will mean either infinite decreasing sequences of subalgebras of a~commutative algebra with involution  (for instance, $L^{\infty}(X,\mu)$), or (equivalently) infinite decreasing sequences of $\sigma$-subalgebras of sets in a~standard measure space~$(X,\mu)$, or, finally (in yet another equivalent geometric language which we will use most frequently), infinite decreasing sequences of measurable partitions.

All  three languages are equivalent; the difference between them is terminological. The first context (filtrations of subalgebras of an algebra with involution) is the most general one, it admits an important generalization achieved by abandoning the commutativity of the ambient algebra and passing to $\operatorname{AF}$-algebras; as mentioned above, this generalization is not touched upon here, though it is closely related to the content of the paper.

We will also consider decreasing sequences of $\sigma$-subalgebras of sets in a~standard Borel space, i.\,e., Borel filtrations. In this case, no measure on the space is given, and the problem arises of describing all measures that agree with the given filtration. This statement  covers almost all problems related to invariant measures in different contexts.

{

\makeatletter

\def\@ssect#1#2#3#4#5{\@tempskipa #3\relax
   \ifdim \@tempskipa>\z@
     \begingroup #4\@hangfrom{\hskip #1}{\interlinepenalty \@M #5\par}%
     \endgroup
   \else \def\@svsechd{#4\hskip #1\relax #5.}\fi
    \@xsect{#3}}

\newbox\@subsection@box@

\def\@sect#1#2#3#4#5#6[#7]#8{%
\setbox\@subsection@box@=\hbox{#8}%
     \ifnum #2>\c@secnumdepth
     \let\@svsec\@empty\else
     \refstepcounter{#1}\protected@edef\@svsec{\csname the#1\endcsname.}%
     \fi
     \@tempskipa #5\relax
      \ifdim \@tempskipa>\z@
        \begingroup #6\relax
          \@hangfrom{\hskip #3\relax\textup{\@svsec}}{\interlinepenalty \@M #8\par}%
        \endgroup
       \csname #1mark\endcsname{#7}\addcontentsline
         {toc}{#1}{\ifnum #2>\c@secnumdepth \else
                      \protect\numberline{\csname the#1\endcsname}\fi
                    #7}\else
        \def\@svsechd{#6\hskip #3\relax  %% \relax added 2 May 90
                   \textup{\@svsec}%
                   \ifdim\wd\@subsection@box@>0pt\enspace #8\fi
                   \csname #1mark\endcsname
                      {#7}\addcontentsline
                           {toc}{#1}{\ifnum #2>\c@secnumdepth \else
                             \protect\numberline{\csname the#1\endcsname}\fi
                       #7}}\fi
     \@xsect{#5}}

\makeatother

\subsection{Where do filtrations appear?}
\label{ssec1.3}
Filtrations appear in the theory of random processes, potential theory and the theory of Markov processes, ergodic theory, topological and metric dynamics.

The most interesting applications belong to the theory of graded graphs (Bratteli diagrams), the theory of
 $\operatorname{AF}$-algebras, and asymptotic combinatorics. Here are several general examples.

}

\smallskip
\textbf{A.~The ``past'' of random processes.}
%\label{ssecA}
Consider an arbitrary random process, for instance, a~process~$\{x_m\}_m$ with real values and discrete time,
$m \leqslant 0$ (it is convenient to index the random variables by the negative integers).

The $\sigma$-algebra~${\mathfrak A}_n$ consists of all measurable sets that can be described by the ``past'' of the process, i.\,e., by the random variables with indices
 $m <-n$. In this context, $\{{\mathfrak A}_n\}_n$ is the
\textit{past filtration of the random process}. In a~similar way, we can define the ``future'' filtration  of a~random process. A~case of importance in ergodic theory is that of a~stationary filtration:
$$
\{\xi_n\}_{n=0}^{\infty}\simeq  \{\xi_n/{\xi_1}\}_{n=1}^{\infty},
$$
where  $\simeq$ stands for metric isomorphism, i.\,e., means the existence of a~measure-pre\-serv\-ing automorphism that sends the first filtration to the second one. In this case, the process~$\{x_m\}_m$ described above is stationary, i.\,e., the corresponding probability measure is invariant under the left shift ${T\{x_m\}}_m=\{x_{m-1}\}_m$,
$m<0$. In other words, the shift executes the passage to the quotient by the first partition, and the quotient filtration is isomorphic to the original one. There are several papers on the theory of stationary filtrations; for applications of the standardness criterion, see Sec.~\ref{sec5}. However, one should keep in mind that by no means all invariants of a~stationary filtration are endomorphism invariants. A~remarkable example of such an invariant is Ornstein's ``very weak Bernoulli'' (VWB) condition.

\smallskip
\textbf{B.~Filtrations in approximation theory.}
%\label{ssecB}
Consider an ergodic automorhism of a~measure space~$(X,\mu)$:
$T\colon X \rightarrow X$, $T\mu=\mu$. By the classical Rokhlin's theorem, it can be approximated in the uniform topology by periodic automorphisms. Periodic approximations can be amended in such a~way that the partitions into their trajectories decrease, i.\,e., form a~\textit{filtration of partitions into the trajectories of periodic approximations}.

It is this construction that leads to the so-called adic realization of an automorphism~\cite{67},~\cite{68}. Many important metric invariants of the automorphism can be extracted from the properties of this filtration; this will also be discussed below.

This example can be extended to amenable groups and amenable actions of arbitrary groups (see~\cite{37}). Consider a~dynamical system in a~space~$X$ with an invariant measure~$\mu$ in which ``time'' is a~countable amenable group~$G$. In other words, the group~$G$ is represented by automorphisms of the space $(X,\mu)$, i.\,e., measurable measure-preserving transformations:
\begin{align*}
T\colon G &\to \operatorname{Aut}(X,\mu),
\\
g &\mapsto T_g\colon X\to X,
\\
&\qquad\quad\;\;\mu\, \mapsto\, \mu.
\end{align*}
A~fundamental theorem of ergodic theory says that if the group~$G$ is amenable, i.\,e., has an invariant mean, then the orbit partition of~$G$ is tame (hyperfinite), i.\,e., there exists a~(non-unique) filtration that determines a~decreasing sequence of partitions which tends to the partition into the trajectories of the group action. There is a~particularly natural link between the theory of filtrations and the theory of actions of locally finite groups: in this case,
$G=\bigcup\limits_n G_n$ is an inductive limit of finite groups, and for every action of~$G$ there is a~canonically defined sequence of partitions into the trajectories of the finite subgroups~$G_n$.

Some invariants of filtrations constructed in this way, e.\,g., entropy  (see below), are also invariants of the action.

The adic realization of a~group action is a~kind of alternative to the generally accepted symbolic realization of this action by a~group of shifts in a~space of functions on the group. It can be regarded as a~generalization of the odometer to an arbitrary graded graph. This realization determines an approximation of the group action by finite groups, and thus it is a~far-reaching generalization of Rokhlin's lemma on periodic approximation.

\smallskip
\textbf{C.~Invariant and central measures.} \label{ssecC} Assume that we have an $\mathbb N$-graded
locally finite graph
$\Gamma=\displaystyle\coprod_n \Gamma_n$,
$\Gamma_0=\{\varnothing\}$, and let~$T(\Gamma)$ be the space of infinite paths in~$\Gamma$ starting from the vertex~$\varnothing$. The space~$T(\Gamma)$, being an inverse spectrum of finite spaces (of finite paths), is a~Cantor-like topological space, and hence is endowed with a~Borel structure. In~$T(\Gamma)$,  the
 \textit{tail filtration} is defined: the $n$th $\sigma$-algebra consists of all sets that do not change if we take an arbitrary path from such a~set and modify its initial segment up to level~$n$. If one endows  the space of paths with a~Borel measure, then the tail filtration turns into the filtration from Example~A above. But  here a~new problem arises, related to
\textit{measures with given cotransition probabilities}; namely, the problem of describing the central measures (see
Sec.~\ref{sec7}). This problem appeared in the theory of
$C^*$-algebras ($\operatorname{AF}$-algebras)
and locally finite groups (the list of traces and characters), in the theory of Markov processes and invariant Markov measures (``entrance and exit boundaries'' in the sense of Dynkin), in the theory of invariant measures for dynamical systems. This circle of problems has equivalent combinatorial and geometric statements interesting in their own right. Note that this class of examples of filtrations has important additional specific features, which we will use in what follows.

One of the results of this paper is as follows (Sec.~\ref{ssec4.1}).

\begin{theorem}\label{th1}
Every ergodic locally finite filtration is isomorphic to a~Markov filtration.
\end{theorem}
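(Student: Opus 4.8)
The plan is to realize the given filtration as the tail filtration of a Markov chain (equivalently, of a path space of a graded graph carrying a Markov measure), from which the asserted isomorphism can be read off directly. I work in the language of partitions and write $\mathfrak{A}_n$ for the $\sigma$-algebra of $\xi_n$-measurable sets, so that $\mathfrak{A}_0\supset\mathfrak{A}_1\supset\cdots$, and by ergodicity $\bigcap_n\mathfrak{A}_n=\mathfrak{N}$ is trivial.

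First I would use local finiteness to coordinatize the filtration. Since almost every class of $\xi_n$ is a union of only finitely many classes of $\xi_{n-1}$, one can select, measurably and simultaneously in all classes, a finite-valued function $a_n$ labelling the $\xi_{n-1}$-subclass of a point within its $\xi_n$-class; thus $\mathfrak{A}_{n-1}=\sigma(a_n)\vee\mathfrak{A}_n$ for every $n$. Telescoping these identities and using ergodicity to discard the trivial tail, one obtains $\mathfrak{A}_n=\sigma(a_{n+1},a_{n+2},\dots)$ for all $n$, so that $x\mapsto(a_1(x),a_2(x),\dots)$ is an isomorphism $\bmod 0$ onto a shift space whose tail filtration is precisely $\{\xi_n\}$. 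The raw increments $a_n$ need not form a Markov chain, and repairing this defect is the real content of the theorem.

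The key step is to replace the increments by coordinates that \emph{do} form a Markov chain, without disturbing the tail filtration. For this I would take, at level $n$, the retrodiction state $v_n$ equal to the conditional distribution of the already forgotten word $(a_1,\dots,a_n)$ given the whole future $\mathfrak{A}_n$. By its very definition $v_n$ is $\mathfrak{A}_n$-measurable and satisfies $\mathbb{E}[f(a_1,\dots,a_n)\mid\mathfrak{A}_n]=\mathbb{E}[f(a_1,\dots,a_n)\mid v_n]$ for every bounded $f$; that is, conditioning on $v_n$ already renders the past independent of the entire future $\mathfrak{A}_n$, and in particular of $v_{n+1},v_{n+2},\dots$. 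This sufficiency is the substance of the Markov property, and the standard principle that the prediction-from-the-future process of any coordinate sequence is Markov then shows $(v_n)_n$ to be a Markov chain whose cotransition probabilities depend only on two consecutive states. Enlarging $v_n$ to the pair $(v_n,a_{n+1})$ where needed, one keeps the chain Markov while also securing $\sigma(v_n,v_{n+1},\dots)=\mathfrak{A}_n$, so that the tail filtration of the Markov chain $(v_n)$ is exactly $\{\xi_n\}$, and the identity on coordinates is the desired isomorphism onto a Markov filtration.

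The step I expect to be the main obstacle is reconciling the two competing demands on the states $v_n$: to be Markov the state must be a \emph{sufficient} statistic for the past, hence as coarse as possible, whereas to preserve the filtration the states must \emph{generate} $\mathfrak{A}_n$, hence be fine enough to recover the coordinates. Showing that a single choice — the retrodiction process, augmented by the current increment — is simultaneously Markov and generating is the crux, and this is exactly where local finiteness (to keep the combinatorial branching, and hence the state spaces, discrete at each level) and ergodicity (to guarantee that no information hides in the tail and is lost under the passage to states) must be used together. Once this reconciliation is established, both the Markov property and the isomorphism of filtrations follow formally from the construction.
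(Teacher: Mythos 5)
Your overall strategy---coordinatize the filtration by finite label functions $a_n$ with $\mathfrak{A}_{n-1}=\sigma(a_n)\vee\mathfrak{A}_n$, then pass to a sufficient statistic for the forgotten past to force the Markov property---is close in spirit to the paper's proof of Theorem~\ref{th5}: your ``retrodiction state'' plays the role of the paper's partition of $X/\xi_{n-1}$ into isomorphism classes of labeled measured trees. But there is a genuine gap at the telescoping step. From $\mathfrak{A}_n=\sigma(a_{n+1},\dots,a_{n+k})\vee\mathfrak{A}_{n+k}$ you conclude, ``using ergodicity to discard the trivial tail,'' that $\mathfrak{A}_n=\sigma(a_{n+1},a_{n+2},\dots)$. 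This requires the interchange
$$
\bigcap_k\bigl(\mathfrak{B}\vee\mathfrak{A}_{n+k}\bigr)=\mathfrak{B}\vee\bigcap_k\mathfrak{A}_{n+k},
$$
which is false in general even when the right-hand intersection is trivial; it is precisely the tacit assumption in Kolmogorov's 1958 paper that Rokhlin refuted (the example quoted in Sec.~\ref{sec8}). Concretely, for the dyadic Bernoulli filtration on $\{0,1\}^{\infty}$ one may legitimately choose the labels $a_m=x_m+x_{m+1}\ (\operatorname{mod} 2)$; then $\sigma(a_1,a_2,\dots)$ is the index-two subalgebra of sets invariant under the global flip, and your $v_n$ is constant (the label word is uniformly distributed given any future), so even the enlarged states $(v_m,a_{m+1})$ generate a proper subalgebra of $\mathfrak{A}_n$. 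The map onto the trajectory space of your Markov chain is then only a homomorphism (a factor map), not an isomorphism, and nothing in your argument excludes this for an arbitrary choice of labels: the final claim $\sigma(v_n,v_{n+1},\dots)=\mathfrak{A}_n$ is exactly the assertion that needs proof.

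The paper closes this hole with an explicit device: it fixes in advance a basis $\{\eta_n\}$ of finite partitions increasing to the partition into singletons and builds $\eta_n$ into the $n$th state partition $\sigma_n=\eta_n\vee\zeta_n\vee(\text{tree-type refinement})$, so that $\bigvee_n\sigma_n\succ\bigvee_n\eta_n=\epsilon$ and the limit map is injective $\operatorname{mod} 0$ by construction, after which only the compatibility with the partitions $\xi_n$ has to be checked. To repair your proof you would need the same ingredient (or some other explicit guarantee that the chosen states separate points of $X$), rather than an appeal to ergodicity. The remaining components of your argument---finiteness of the state spaces via local finiteness, and the Markov property of the retrodiction process through the relation $v_n=F(v_{n+1},a_{n+1})$ together with sufficiency---are sound and essentially parallel to the paper's.
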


This means that one can study the most interesting class of filtrations by considering only one-sided Markov chains (in general, inhomogeneous in time and with an arbitrary state space). In particular, this provides a~fruitful reduction of the theory of $\operatorname{AF}$-algebras to the probabilistic theory of Markov chains, as well as the reverse transition from the theory of Markov chains to the theory of Bratteli diagrams with their algebraic underpinning.
Actually, this  theorem is a~refinement of the theorem on the existence of an adic realization for actions of amenable groups. It is also appropriate to mention that theorems on tail filtrations of path spaces of graded graphs are closely related to filtrations of commutants of finite-dimensional subalgebras of
 $\operatorname{AF}$-algebras.

\smallskip

\textbf{D.~Statistical physics and inverse limits of simplices.}
\label{ssecD}
Finally, the most popular reason for considering filtrations is provided by the modern formalism of statistical physics, both classical or quantum. Consider a~lattice, or even an arbitrary countable graph~$\Gamma$, and represent it as a~union of finite subsets (volumes):
$$
\Gamma=\bigcup_n \Gamma_n.
$$
Consider the space of configurations (e.\,g., subsets) on~$\Gamma$, i.\,e., the space of functions taking values in some alphabet (e.\,g., the alphabet $(0,1)$). Endow it with a~structure of a~Borel space by taking a~Borel base consisting of the cylinders, i.\,e., the sets of configurations determined by conditions on a~finite part of a~configuration (its restriction to some volume~$\Gamma_n$) and arbitrary outside this volume. Then we have a~filtration in this Borel space: its $n$th $\sigma$-subalgebra consists of the cylinder sets determined by conditions on the restriction of a~configuration to the volume~$\Gamma_n$. In other words, two configurations lie in the same element of the partition if they coincide outside~$\Gamma_n$ for some~$n$. If we have a~probability measure (statistic) on the space of configurations, then we obtain a~filtration of this measure space. A~popular scheme for studying phase transitions and other properties of statistics, adopted in recent years after the work by R.~L.~Dobrushin and others~\cite{5},~\cite{36}, is as follows: one defines not a~measure, but only a~system of conditional probabilities of the restriction of a~configuration to an arbitrary volume, under the condition that its restriction outside this volume is fixed, and studies the measures that have these conditional measures. This is a~method of defining measures in infinite-dimensional spaces alternative to the widely accepted Kolmogorov's method via joint distributions. This scheme for studying filtrations is briefly described below (see Sec.~\ref{sec3}, especially Subsections~\ref{ssec3.5} and
\ref{ssec3.6}).

We emphasize that the above list of areas related to filtrations does not pretend to be exhaustive, and this survey, as well as the list of references, is far from being complete. Especially interesting and little studied links are those with combinatorics; possible material on this topic can be drawn from~\cite{54},~\cite{55}.
We mean that an asymptotic combinatorial problem always involves an inductive family of problems with a~parameter (the number of objects, the collection of dimensions, etc.), but often one can also define a~filtration, whose properties are hidden quite deep, and without an analysis of these properties one cannot fully reveal the true nature of the problem. An example of such a~property is standardness, to be discussed below.

\subsection{Finite and infinite in classification problems; standardness as a~generalization of independence}
\label{ssec1.4}
In mathematics and physics there is a~lot of classification problems involving ``finite'' and ``infinite'' invariants, and usually the core of the classification problem is to find the latter ones. The problem under study, that of metric classification of filtrations, seems to be typical for the class of problems involving analytic, algebraic, and combinatorial components and, moreover, a~nontrivial relation between finite and infinite invariants. Perhaps, the ideas described here will be useful in other problems.

In what follows, classification and types of filtrations are understood in the sense of measure-preserving transformations, and equivalence is meant with respect to the group of such transformations. Instead of  ``equivalent objects (filtrations),'' we will more often say ``isomorphic objects.''

Every infinite filtration of algebras (or similar objects)
$$
\{{\mathfrak A}_n\}_{n=0}^{\infty}={\mathfrak A}_0 \supset
{\mathfrak A}_1\supset \cdots
$$
can be regarded as an infinite collection of its finite fragments, i.\,e., finite filtrations:
$$
\{{\mathfrak A}_k\}_{k=0}^{n},\qquad
n=0,1,2,\dots\,.
$$

Assume that we can give a~classification, with respect to some equivalence relation, of finite filtrations of given length~$n$ for all finite~$n$. More exactly, assume that we can construct a~topological or Borel  \textit{module space~${\mathscr M}_n$}, $n=0,1,2,\dots$, i.\,e., the space of complete invariants of finite filtrations of a~given length. We will say that two infinite filtrations are \textit{finitely isomorphic if for every~$N$ their fragments of length~$N$
are equivalent}, i.\,e., determine the same point in the space~${\mathscr M}_N$, $N=0,1,2,\dots$\;.

Of course, the fact that infinite filtrations are finitely isomorphic does not in general imply that they are isomorphic. In other words, even if the module space of infinite filtrations can be well defined, it is not in general the union of the module spaces of finite fragments of filtrations. We will consider this phenomenon in more detail below.

The main subject of this paper and the new results presented here are in one way or another related to the problem of classification of infinite filtrations and to selection of classes most important for applications. There exist infinite filtrations whose isomorphism type is uniquely determined by the finite isomorphism type. For example, if the group of symmetries of a~finite fragment of a~filtration is trivial, then the (isomorphism) type of this fragment already determines the type of the whole filtration.
Thus a~meaningful theory should deal only with the class of filtrations for which the group of symmetries of every infinite ``tail'' is infinite; partly because of this, we consider only locally finite filtrations (for a~definition, see Sec.~\ref{sec2}): for them, the corresponding groups are infinite, and this feature of these filtrations is already reminiscent of the Markov property. In the class of locally finite filtrations, finite isomorphism does not impose substantial restrictions on the ``infinite properties'' of filtrations, in particular, it does not even determine whether the filtration is ergodic or not.

Attempts to give~a classification of infinite sequences should be preceded by building a~classification of finite filtrations (i.\,e., finite decreasing sequences of subalgebras), and in the categories we are interested in, it either is known, or can easily be obtained. This classification was initiated by V.\,A.~Rokhlin (in the case of one $\sigma$-algebra or one partition), and one can use this pattern to construct invariants of finite filtrations (see, e.\,g.,~\cite{21}). We retell the classification of arbitrary finite decreasing sequences of partitions in the convenient language of graphs and trees. In these cases, the invariants, i.\,e., the module spaces, are quite manageable: these are spaces of finite measured trees. This is the content of a~part of Sec.~\ref{sec2}.

But what else should we add to finite isomorphism to obtain full isomorphism? In other words, what ``infinite invariants''  are not determined by finite ones? And do we really always need them? Are there classes of filtrations for which finite isomorphism already implies isomorphism? Note that the analysis of finite invariants does not allow one to deduce whether or not they suffice to determine the type of a~filtration.

This is one of the main questions, which is rather typical for any discussions about finite and infinite.

The answer to the above question is positive: the desired class is the class of so-called
 \textit{standard filtrations}. In the case of homogeneous (e.\,g., dyadic) filtrations, it consists, as follows immediately from the standardness criterion (see~\cite{59} and Sec.~\ref{sec4} below), of Bernoulli filtrations, i.\,e., past filtrations of  arbitrary sequences of independent random variables.

For general locally finite filtrations, even the statement of the problem is not so obvious and reads as follows. Let us try to generalize the problem of classification of filtrations, i.\,e., endow filtrations with a~certain structure in such a~way that the behavior of finite invariants of a~filtration in the augmented problem allows one to determine whether the ordinary finite invariants uniquely determine the type of this filtration.

Filtrations for which this condition is satisfied will be called \textit{standard}: this class generalizes the notion of independence (\,= Bernoulli property).

But how do this generalized problem and these generalized finite invariants look? The answer is that one should construct a~classification of finite fragments of filtrations  together with some additional data; for instance, this can be fixed measurable functions defined on the base space, but the most appropriate choice is to take a~\textit{metric on the measure space.} Then the measured trees which are invariants of finite fragments of filtrations will be also endowed with a~metric, and these new terms allow one to state the  \textit{criterion of asymptotic behavior of invariants}, called the \textit{standardness criterion};  the validity of its condition is exactly equivalent to the fact that the finite invariants completely determine the type of a~filtration. Moreover, the choice of a~specific metric does not affect the validity of this condition, it should only be admissible (see \cite{92},~\cite{93}). This is precisely the main idea, which we describe in Sec.~\ref{sec5}.
Thus the module space is exactly the space of finite invariants, and the standardness criterion is the condition meaning that the invariants of the conditional filtrations (on the elements of the $n$th partition) converge in measure as $n$ tends to infinity. Invariants of finite filtrations whose all partitions have finite elements  are measures on spaces of trees endowed with a~measure and a~metric. Below we also state this criterion in terms similar to those of martingale theorems.

At first sight, it seems that one can continue this procedure of selecting good classes, extending the problem of classification of filtrations and obtaining further classes subject to classification. But this is not the case: outside the standardness class there is essentially one class, indivisible and very interesting, that of \textit{totally nonstandard filtrations}, for which there is still no complete classification (and perhaps it does not exist), but there are nontrivial invariants, both resembling old invariants, such that scale and (secondary) entropy, and, possibly, new ones, such as ``higher zero--one laws,'' which provide far-reaching generalizations of Kolmogorov's laws. Perhaps, the terminology may change, because standardness can be understood both in the hard sense (as minimality) and in the mild sense (as proper standardness); the future will show a~convenient terminology.

It is interesting to compare this with Ornstein's theory of the very weak Bernoulli property (VWB),
$d$- and $f$-metrics, etc.; indeed, the VWB property is similar to standardness, the difference being only in the choice of
metrics on the space of conditional measures, as well as in the fact that ergodic theory deals only with standard filtrations.

We show that finite classification of filtrations is a~tame problem, i.\,e., the corresponding module space is a~reasonable Borel space. But the general problem of metric classification of arbitrary filtrations is ``wild'' in the accepted sense and seems to be just as difficult as the problem of classification of automorphisms of spaces and similar problems.

\subsection{A~summary of the paper}
\label{ssec1.5}

This is mainly a~survey paper which partially summarizes the research on filtrations from the introduction (in 1969) of the notion of standardness (more exactly, the discovery of examples of non-Bernoulli filtrations finitely isomorphic to Bernoulli ones) and the standardness criterion. We omitted several important topics directly related to the theory of filtrations, e.\,g., the notion of ``cosy'' filtrations in the sense of Tsirelson, recent papers by S.~Laurent, etc.  Our main purpose is, first, to extend basic results on standardness to inhomogeneous and non finitely Bernoulli filtrations and, second, to combine the theory of filtrations with the theory of graded graphs (Bratteli diagrams), and thus with the theory of $\operatorname{AF}$-algebras, combinatorics of graphs and Markov chains. Hence our main interest is in locally finite and locally representable filtrations, i.\,e., filtrations in which fibers (elements of partitions) are finite for every partition and the number of different types of conditional measures is also finite. A~detailed introduction, including Rokhlin's theory of one partition, is given in Sec.~\ref{sec2}. Links with the theory of graded graphs and Markov chains are described in Secs.~\ref{sec3} and~\ref{sec4}. It should be mentioned that the existence of these links enriches both the theory of filtrations and the theory of graphs, Markov chains, and algebras. Examples that have appeared in recent years brilliantly demonstrate this. There have appeared many interesting and complicated graphs originating from the theory of adic (i.\,e., ``graphic'') approximations of dynamical systems (the graphs of ordered and unordered pairs, graphs of words, etc.) which provide examples of nonstandard filtrations;  on the other hand, graph theory is a~source of new problems related to algebras and new realizations of filtrations. The theorem on a~Markov realization of a~locally finite filtration is proved in Sec.~\ref{sec4}. The key notion of the theory is standardness. For homogeneous and finitely Bernoulli ergodic filtrations, it coincides with independence, i.\,e., a~standard filtration is the ``past'' of a~Bernoulli scheme. For general locally finite filtrations, this notion is more sophisticated, and we give its definition and a~criterion for testing it. In particular, we state the standardness criterion in terms of random processes, namely, as a~strengthening of the martingale convergence theorem. This  is a~theorem on diagonal (in a~certain sense) convergence. There appears a~scale of strengthenings, which we inteprete as a~scale of zero--one laws: at one end of this scale, we have Kolmogorov's zero--one law, and on the other end, standardness or (in the homogeneous case) Bernoulli property. These questions are discussed in Sec.~\ref{sec5}. It should be said at once that in the case of general filtrations one can define different degrees of ``nonstandardness,'' or different degrees of closeness to standard filtrations. Nonstandardness in the homogeneous (e.\,g., dyadic) case is of most interest. In Sec.~\ref{sec6} we analyze several key examples, in particular, a~scheme of random walks in random scenery. But this work is still in progress, so this section is of preliminary nature. Finally, Sec.~\ref{sec7} deals with filtrations in Borel spaces and the problem of enumerating all measures with a~given equipment. This subject has wide applications, we describe its links with the theory of random walks on groups and the theory of traces of algebras and characters of groups.

I~considered it useful to supplement the survey with several historical remarks on the interesting history of the theory of filtrations.

\section{The definition of filtrations in measure spaces}
\label{sec2}

\subsection{Measure theory: a~Lebesgue space, basic facts}
\label{ssec2.1}
We consider a~certain category of measure spaces, that of  \textit{Lebesgue spaces in the sense of Rokh\-lin~\cite{45}},  or standard measure spaces. A~Lebesgue space is a~space endowed with a~separable $\sigma$-algebra of sets that is complete with respect to some (and hence every) basis; on this $\sigma$-algebra, a~probability $\sigma$-additive measure is defined. The separability of a~$\sigma$-algebra means the existence of a~countable basis of measurable sets that separates the points of the space; the definition of a~basis includes the condition that an arbitrary measurable set~$A$ should be contained in some set~$B$ from the Borel hull of the basis with $\mu(B\setminus A)=0$ (the weaker condition $\mu(B\bigtriangleup A)=0$ is not sufficient; we do not dwell on this here, see~\cite{45}).

All objects and notions, such as spaces, bases, completeness, sets, functions, partitions, maps, etc., should be understood $\operatorname{mod} 0$, i.\,e., up to modifications on sets of measure zero. A~Lebesgue space is a~class of Lebesgue spaces coinciding $\operatorname{mod} 0$, a~measurable function is a~class of functions coinciding $\operatorname{mod} 0$, etc.

Such an agreement makes it necessary to check that all notions, definitions, statements, proofs are well defined with respect to $\operatorname{mod} 0$, the fact that is usually ignored by all authors; however,  experienced authors have never got into trouble because of this neglect.

Morphisms in the category of Lebesgue spaces are measurable (in a~clear sense) measure-preserving maps, more exactly, classes (in the image and inverse image) of maps coinciding $\operatorname{mod} 0$. In more detail, a~measurable map of a~Lebesgue space into an arbitrary ``measurable'' space,   i.\,e., a~space endowed with a~$\sigma$-algebra (for instance, into a~standard Borel space with countable base, the interval $[0,1]$), is a~map such that the inverse image of a~measurable set is measurable. If the image space is also endowed with a~measure and the measure of an image is equal to the measure of the inverse image, then  the map is called measure-preserving. If the same is true for the inverse map, then it is called an isomorphism (in the case where the image and the inverse image coincide, an automorphism) of measure spaces.

Note that the image of a~Lebesgue space under a~measurable map is a~Le\-bes\-gue space (cf.\ the corresponding theorem for compact sets).

Two objects corresponding to each other under an isomorphism and the inverse isomorphism are called isomorphic (and sometimes equivalent).

The following classification theorem due to Rokhlin refined an earlier von Neumann's~\cite{96} theorem, which was of the same type, but was based on a~more complicated axiomatics of measure spaces.

\textit{There exists a~unique, up to isomorphism, Lebesgue space with continuous (i.\,e., having no atoms) probability measure}. The interval $[0,1]$ endowed with the Lebesgue measure is a~universal example of a~Lebesgue space with continuous measure. Every separable compact space with continuous Borel probability measure is another such example. The most convenient universal example of a~Lebesgue space is the countable product of two-point spaces, i.\,e., the dyadic compactum with an arbitrary continuous Borel measure. Thus, in the category of Lebesgue spaces, the most interesting objects (with continuous measure) are isomorphic.

Note that the construction of the theory of Lebesgue spaces in the sense of Rokhlin in~\cite{45} essentially reproduces the theory of metric compacta and, even more specifically, of the dyadic compactum, with necessary modifications taking into account the specifics of measure theory.

Passing to classification of general Lebesgue spaces, i.\,e., those with an arbitrary measure, consists in adding at most countably many atoms of positive measure.

Thus a~complete (metric) invariant of a~general Lebesgue space is an ordered sequence of nonnegative numbers
$$
m_1\geqslant m_2 \geqslant m_3 \geqslant \dots \geqslant 0,\qquad
\sum_i m_i \leqslant 1,
$$
namely, the sequence of sizes of all atoms, i.\,e., the points of positive measure.

The zero sequence corresponds to a~space with continuous measure, and a~sequence with unit sum,
 $\displaystyle\sum_i m_i=1$, to a~space with atomic measure.

\subsection{Measurable partitions, filtrations}
\label{ssec2.2}
A~measurable partition~$\xi$ (see Fig.~\ref{fig1}) of a~Lebesgue space $(X,\mu)$ is the partition of~$X$ into the inverse images of points under a~measurable map
$f\colon X\to R$, where $R$ is an arbitrary standard Borel space, for instance (without loss of generality), the interval
 $[0,1]$. The fact that a~set~$C$ is an element (block) of a~partition~$\xi$ is denoted as follows: $C\in\xi$.
Partitions are also regarded  $\operatorname{mod} 0$.

\begin{figure}[h!]
\centering
\includegraphics{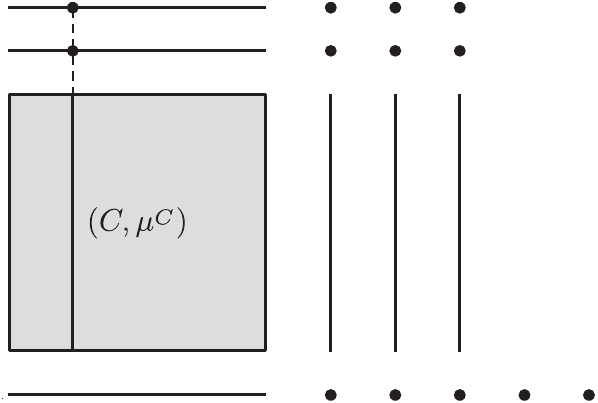}
\caption{A~measurable partition}
\label{fig1}
\end{figure}

A~fundamental fact, which goes back to works on probability theory of the 1930s (J.~Doob and others), but was rigorously proved by V.\,A.~Rokhlin at the initiative of A.\,N.~Kolmogorov (some hints on the necessity of a~rigorous study of conditional measures are contained in his famous book~\cite{33}), is as follows.

\begin{theorem}[{\rm(V.\,A.~Rokhlin~\cite{45})}]
\label{th2} Every measurable partition~$\xi$ of a~Lebesgue space
$(X,\mu)$ can be equipped with a~unique $\operatorname{mod} 0$ canonical system of measures~$\{\mu^C\}$, $C\in \xi$ (in a~more popular language, a~system of conditional measures), on almost all elements of~$\xi$ in such a~way that

1) almost all, with respect to the measure $\mu$, elements $(C,\mu^C)$ endowed with the conditional measures are Lebesgue spaces; the quotient space $X/\xi$ endowed with the quotient measure~$\mu_{\xi}$ is also a~Lebesgue space;

2) for every measurable set $A \in X$, the function  $\mu^C(A\cap C)$ is measurable (as a~function on
 $(X/\xi,\mu_{\xi})$), and the following (Fubini, repeated integration) formula holds:
$$
\mu(A)=\int_{X/{\xi}}\mu^C( A\cap C)\,d\mu_{\xi}(C).
$$
\end{theorem}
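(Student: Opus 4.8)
The plan is to construct the conditional measures as a disintegration of~$\mu$ along the fibers of~$\xi$, building them first on a countable basis and then extending by $\sigma$-additivity. By the classification theorem quoted above I may realize $(X,\mu)$ as the dyadic compactum $\prod_1^\infty\{0,1\}$ with a continuous Borel measure (the atomic part being handled trivially), so that the distinguished countable basis $\{B_n\}$ consists of clopen cylinders and hence generates a compact topology. The partition~$\xi$ is given by a measurable map $\pi=f\colon X\to R$; since the image of a Lebesgue space under a measurable map is again a Lebesgue space, the quotient $(X/\xi,\mu_\xi)$ is a Lebesgue space, which disposes of the second assertion in part~1).

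\textbf{Existence.} For each basis set~$B_n$ I take the conditional expectation $\mathbb{E}(\mathbf{1}_{B_n}\mid\mathfrak{A}_\xi)$ with respect to the $\sigma$-subalgebra $\mathfrak{A}_\xi$ of $\xi$-measurable sets; being $\mathfrak{A}_\xi$-measurable, it descends to a function~$g_n$ on $X/\xi$, and I set provisionally $\mu^C(B_n)=g_n(C)$. The algebra generated by the~$B_n$ carries only countably many instances of finite additivity, monotonicity, and normalization, and each such relation is inherited $\mu_\xi$-almost everywhere by the corresponding~$g_n$; discarding a single null set of the quotient, I obtain for $\mu_\xi$-a.e.\ $C$ a finitely additive, $[0,1]$-valued set function on the algebra of cylinders.

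\textbf{Extension and support.} The main obstacle is to upgrade this finitely additive set function to a genuine $\sigma$-additive Borel measure $\mu^C$ on~$X$. This is exactly where the compact realization pays off: a finitely additive measure on the algebra of clopen cylinders of a compact space is automatically $\sigma$-additive, since any countable cover of a (compact) clopen cylinder by clopen cylinders reduces to a finite one, so it extends uniquely to a Borel probability measure by Carath\'eodory's theorem. To see that $\mu^C$ is concentrated on the element~$C$, I use that the quotient $\sigma$-algebra is countably generated: fixing a countable family of $\xi$-saturated sets separating the elements of~$\xi$, for each such set~$S$ one has $\mu^C(S)=\mathbf{1}_S(C)$ for $\mu_\xi$-a.e.\ $C$, and intersecting the corresponding full-measure sets shows that $\mu^C$ lives on~$C$ for $\mu_\xi$-a.e.\ $C$; this gives part~1). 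The strong form of the basis axiom (outer approximation with $\mu(B\setminus A)=0$) is what keeps all these exceptional manipulations valid $\operatorname{mod}0$.

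\textbf{Fubini formula and uniqueness.} For every basis set~$B_n$ the defining property of conditional expectation gives $\mu(B_n)=\int_{X/\xi}g_n\,d\mu_\xi=\int_{X/\xi}\mu^C(B_n)\,d\mu_\xi(C)$, and more generally $\mu(B_n\cap\pi^{-1}(D))=\int_D\mu^C(B_n)\,d\mu_\xi$ for every measurable $D\subset X/\xi$. In the argument~$B_n$, both sides are $\sigma$-additive measures agreeing on the generating algebra, so by the monotone class theorem the identity $\mu(A)=\int_{X/\xi}\mu^C(A\cap C)\,d\mu_\xi(C)$ holds for every measurable~$A$ (here $\mu^C(A\cap C)=\mu^C(A)$ since $\mu^C$ lives on~$C$), which proves the measurability of $C\mapsto\mu^C(A\cap C)$ and the integration formula of part~2). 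Finally, if $\{\nu^C\}$ is a second canonical system, then for each~$B_n$ both $C\mapsto\mu^C(B_n)$ and $C\mapsto\nu^C(B_n)$ are versions of the same conditional expectation $\mathbb{E}(\mathbf{1}_{B_n}\mid\mathfrak{A}_\xi)$, hence agree $\mu_\xi$-a.e.; intersecting the countably many full-measure sets shows that $\mu^C=\nu^C$ on the basis, and therefore as measures, for $\mu_\xi$-a.e.\ $C$, which is the asserted uniqueness $\operatorname{mod}0$.
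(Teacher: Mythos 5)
Your proof is correct and follows essentially the route the paper itself indicates (the paper only sketches the argument, deferring to Rokhlin~\cite{45}): pass to a compact realization and use compactness to turn the countably many almost-everywhere identities satisfied by conditional expectations into genuine measures --- you do this with clopen cylinders, finite-to-countable additivity via compactness, and Carath\'eodory extension, where the paper's sketch uses continuous functions and the Riesz representation, an inessential difference. The only point you gloss over is the assertion that almost every $(C,\mu^C)$ is itself a Lebesgue space, but this is immediate from your construction, since each $\mu^C$ is a Borel probability measure on a compact metric space concentrated on the Borel set~$C$.
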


Recall that a~measurable partition~$\xi$ gives rise to a~unique $\sigma$-algebra~${\mathfrak A}_{\xi}$ of sets measurable with respect to~$\xi$, i.\,e., composed of elements of~$\xi$. In Lebesgue spaces, the converse is also true: every $\sigma$-subalgebra~${\mathfrak A}$ of measurable sets in a~space
$(X,\mu)$ gives rise to a~unique ($\operatorname{mod} 0$) measurable partition~$\xi$, and ${\mathfrak A}_{\xi}=\mathfrak A$.

In the spaces~$L^1$ and~$L^2$, we have the \textit{operator of conditional expectation} corresponding to a~measurable partition~$\xi$,
i.\,e., the projection~$P_{\xi}$ (in~$L^2$, the orthogonal projection) to the subspace of functions constant on elements of~$\xi$; this is the operator of averaging over the elements of~$\xi$ endowed with the conditional measures.

From an alternative point of view, the theorem on conditional measures is simply a~theorem on an integral representation of the operator of conditional expectation:
$$
(P_{\xi}f)(C)=\int_C f(x)\,d\mu^C(x).
$$

The left-hand side can be understood as a~function on the quotient space, but also as a~function on~$X$ belonging to the subalgebra of functions constant on elements of~$\xi$.

The projection to this subalgebra is well defined, and the proof of the existence of an integral representation  usually proceeds as  follows: first one endows the space~$X$ with a~compact topology, or even a~metric, then considers the projection to the space of continuous functions, and, finally, approximates functions in $L^1(X,\mu)$ by continuous functions.

Introducing a~partial order on the classes of partitions coinciding $\operatorname{mod} 0$ turns the collection of partitions into a~lattice, where $\xi\succ \xi'$ means that elements of~$\xi$ are finer than elements of~$\xi'$, i.\,e., that an element of~$\xi'$ is composed of elements of~$\xi$; it is this order that agrees with the order of inclusion on the spaces of measurable functions associated with partitions.
The greatest partition is the partition into singletons $\operatorname{mod} 0$, and the smallest one is the trivial partition (with only one nonempty component). Note that this order is opposite to that adopted in combinatorics, where the greatest partition is the trivial one and the smallest partition is that into singletons.

\subsection{Classification of measurable partitions}
\label{ssec2.3} The purpose of the upcoming sections is to describe metric invariants (i.\,e., give a~classification) of partitions, and then of finite filtrations, i.\,e., finite decreasing sequences of measurable partitions. The case of one partition is investigated in the well-known paper~\cite{45}, and we cite the corresponding result (with a~slightly modified statement) only for convenience of further generalizations. Note that this is perhaps the main original result of~\cite{45}.

\begin{theorem}[{\rm(V.\,A.~Rokhlin~\cite{45})}]
\label{th3}
A~metric invariant of a~measurable partition~$\xi$ in a~continuous Lebesgue space $(X,\mu)$ is a~metric invariant of the map
$$
V_{\xi}\colon(X/\xi,\mu_{\xi}) \to \Sigma,
$$
where $(X/\xi,\mu_{\xi})$ is the quotient space, $\mu_{\xi}$ is the quotient  of the measure~$\mu$ under the projection $X \to X/\xi$, and the map~$V_{\xi}$ sends every element $C \in \xi$ to the ordered sequence of measures of the atoms $m_1(C)\geqslant
m_2(C)\geqslant\cdots$, regarded as a~point of the simplex~$\Sigma$ of all ordered series of nonnegative numbers with sum at most~$1$.
\end{theorem}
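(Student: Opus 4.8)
The plan is to show that the isomorphism class of $\xi$ as a partition coincides with the isomorphism class of the map $V_\xi$, by bringing $(X,\mu)$ together with $\xi$ into a canonical form that depends only on $V_\xi$. The starting point is Theorem~\ref{th2}: the partition $\xi$ carries a canonical system of conditional measures $\{\mu^C\}_{C\in\xi}$ with $C\mapsto\mu^C(A)$ measurable for every measurable $A$, and each $(C,\mu^C)$ a Lebesgue space. By the classification of Lebesgue spaces recalled in Section~\ref{ssec2.1}, the complete metric invariant of a fiber $(C,\mu^C)$ is exactly its ordered sequence of atom sizes $m_1(C)\geqslant m_2(C)\geqslant\cdots$, that is, the point $V_\xi(C)\in\Sigma$ (the mass $1-\sum_i m_i(C)$ of the continuous part is then determined). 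Thus $V_\xi(C)$ records precisely the isomorphism type of the fiber over $C$.

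The easy direction is necessity. If $T\colon(X,\mu)\to(X',\mu')$ is an isomorphism carrying $\xi$ to $\xi'$, then $T$ descends to an isomorphism $\bar T\colon(X/\xi,\mu_\xi)\to(X'/\xi',\mu'_{\xi'})$ of quotient spaces, and by the uniqueness clause of Theorem~\ref{th2} it transports conditional measures, $T_*\mu^C=(\mu')^{\bar T(C)}$. Since $T$ restricts to an isomorphism of the Lebesgue spaces $(C,\mu^C)$ and $(\bar T(C),(\mu')^{\bar T(C)})$, these fibers have the same atom sizes, so $V_{\xi'}\circ\bar T=V_\xi$; that is, $\bar T$ is an isomorphism of the maps $V_\xi$ and $V_{\xi'}$.

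For sufficiency I would produce a canonical model realizing both sides. Fix, on a single underlying standard Borel space $Y$, a family of Lebesgue measures $\rho_\sigma$, one for each $\sigma=(m_1\geqslant m_2\geqslant\cdots)\in\Sigma$, placing atoms of masses $m_1,m_2,\dots$ at fixed locations listed in decreasing order and the continuous part of mass $1-\sum_i m_i$ on a fixed interval, so that $\sigma\mapsto\rho_\sigma$ is measurable. The goal is a measurable map $\Phi\colon X\to(X/\xi)\times Y$ that is the identity on the first coordinate and restricts, for $\mu_\xi$-a.e.\ $C$, to an isomorphism $(C,\mu^C)\to(Y,\rho_{V_\xi(C)})$. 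To build $\Phi$ I would first choose, from a countable separating basis of the $\sigma$-algebra, a measurable function $g\colon X\to[0,1]$ whose restriction to $\mu^C$-a.e.\ fiber is injective $\bmod\,0$; pushing forward yields measures $\nu^C=(g|_C)_*\mu^C$ on $[0,1]$ depending measurably on $C$ (by the measurability clause of Theorem~\ref{th2}). Applying the quantile (monotone rearrangement) construction to $\nu^C$, with a fixed bookkeeping rule for listing atoms in order of decreasing mass and filling the continuous part, produces the fiberwise isomorphism $([0,1],\nu^C)\to(Y,\rho_{V_\xi(C)})$, whose measurable dependence on $C$ follows from that of $\nu^C$. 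Composing gives $\Phi$, exhibiting $(X,\mu,\xi)$ as the bundle over $(X/\xi,\mu_\xi)$ with fibers $(Y,\rho_{V_\xi(C)})$, an object that depends only on the isomorphism class of the map $V_\xi$. The same construction applied to $\xi'$ then lets an isomorphism of the maps $V_\xi$ and $V_{\xi'}$ lift to an isomorphism of the two bundles, hence of $\xi$ and $\xi'$, completing the equivalence.

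The main obstacle is the measurable selection hidden in the construction of $\Phi$: because the number and sizes of the atoms, as well as the presence of the continuous part, vary with $C$ across the continuum $\Sigma$, one cannot reduce to countably many pieces of constant fiber type and must instead verify that the rearrangement to canonical form is genuinely a single measurable family over the base. Care is also needed so that every step — the injectivity of $g$ on fibers, the pushforwards $\nu^C$, and the rearrangement — is consistent $\bmod\,0$, so that the resulting $\Phi$ is a well-defined isomorphism in the category of Lebesgue spaces.
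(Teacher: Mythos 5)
Your argument is correct, but note that the paper itself gives no proof of Theorem~\ref{th3}: it is quoted from Rokhlin~\cite{45} precisely as a known classification result, so there is no in-paper proof to compare against. What you have written is essentially Rokhlin's classical argument. The necessity half is exactly right: an isomorphism carrying $\xi$ to $\xi'$ transports the canonical systems of conditional measures by the uniqueness clause of Theorem~\ref{th2}, hence intertwines $V_\xi$ and $V_{\xi'}$. For sufficiency, your reduction to a canonical bundle model $(X/\xi)\times Y$ with fiber measures $\rho_{V_\xi(C)}$ is the right device, and the two technical points you flag are the real content; both are standard but should be spelled out. First, the fiberwise normalization splits into an atomic and a continuous part: for the continuous part the distribution function $F_C(x)=\nu^C_{\mathrm{cont}}([0,x])$ is an isomorphism $\operatorname{mod}0$ onto Lebesgue measure of the appropriate mass (the intervals where $F_C$ is constant are $\nu^C$-null), while for the atomic part one needs a \emph{measurable enumeration} of the atoms of the family $\{\nu^C\}$ in decreasing order of mass, with a measurable tie-breaking rule (e.g.\ by position in $[0,1]$); this is obtained by locating, for each rational threshold, the finitely many atoms exceeding it via a countable basis of intervals. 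Second, injectivity of $\Phi$ $\operatorname{mod}0$ plus the disintegration formula of Theorem~\ref{th2} is what upgrades $\Phi$ to an isomorphism of Lebesgue spaces, using that a measurable, measure-preserving, a.e.\ injective map between Lebesgue spaces is invertible $\operatorname{mod}0$. With these two points made explicit the proof is complete, and it also yields the paper's subsequent remark that the residual invariants of the map $V_\xi$ are the pushforward measure $({V_\xi})_*\mu_\xi$ on $\Sigma$ together with multiplicities.
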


Thus, two measurable partitions of Lebesgue spaces are isomorphic, i.\,e., correspond to each other under an isomorphism of measure spaces, if and only if the corresponding maps~$V$
are metrically isomorphic.

But we can go further and ask about the metric invariants of the map~$V_{\xi}$. The answer is as follows: these invariants are the $V_{\xi}$-image of the measure~$\mu_{\xi}$,
which is a~Borel measure on the simplex~$\Sigma$, and the so-called multiplicities (we will not need them, so we refer the interested reader to the paper~\cite{70}, where the invariants of measurable maps are described in detail, in generalization of V.\,A.~Rokhlin's paper~\cite{46} on classification of functions).

Speaking rather loosely, a~measurable partition is a~random measure space whose all realizations are regarded as disjoint subsets in the same space.

Before turning to the case of several partitions, we describe a~number of classes and examples of partitions.

1. \textit{Partitions that have independent complements}.
Assume that
 almost all elements~$C$ of a~partition~$\xi$ are isomorphic as spaces endowed with the conditional measures~$\mu^C$, i.\,e., the $V_{\xi}$-image of the measure~$\mu_{\xi}$ is the $\delta$-measure at the point of the simplex~$\Sigma$ corresponding to the (common) type of the conditional measures~$\mu^C$.
In this, and only this, case the partition~$\xi$ has an \textit{independent complement~$\xi^-$}, and the space~$(X,\mu)$ can be decomposed into a~direct product of measure spaces:
$$
(X,\mu)=(X/{\xi}, \mu_{\xi})\times (X/{\xi^-},\mu_{\xi^-}),
$$
where the partition~$\xi^-$ is the unique (up to isomorphism, but not geometrically) independent complement to~$\xi$. Almost every element of~$\xi$ has a~nonempty intersection with almost every element of~$\xi^-$, and the independence condition is satisfied:
$$
\mu(A\cap B)=\mu_{\xi}(A)\mu_{\xi^-}(B)
$$
for any sets~$A$ and~$B$ measurable with respect to~$\xi$
and~$\xi^-$, respectively.

\begin{corollary}[{\rm(V.\,A.~Rokhlin~\cite{45})}]
\label{cor1}
A~Lebesgue space with continuous measure has a~unique, up to isomorphism, partition with continuous conditional measures.
\end{corollary}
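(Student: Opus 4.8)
The plan is to deduce the corollary from the uniqueness of the continuous Lebesgue space (Subsection~\ref{ssec2.1}) together with the product decomposition recorded just above for partitions whose $V_\xi$-image is a single $\delta$-measure.

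First I would restate the hypothesis in the language of Theorem~\ref{th3}. A~partition~$\xi$ has continuous (atomless) conditional measures precisely when, for $\mu_\xi$-almost every element~$C$, the ordered sequence of atom sizes $m_1(C)\geqslant m_2(C)\geqslant\cdots$ is identically zero, i.\,e.\ $V_\xi(C)$ is the zero point~$0\in\Sigma$; equivalently, the $V_\xi$-image of~$\mu_\xi$ is the Dirac mass~$\delta_0$. In particular all the conditional spaces $(C,\mu^C)$ share one and the same type, that of a~continuous probability Lebesgue space, so we are exactly in the situation treated above under partitions with an independent complement (the case of a~$\delta$-measure concentrated at a~single point of~$\Sigma$).

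I~would then import the product decomposition from that discussion: $\xi$ has an independent complement and $(X,\mu)\cong(X/\xi,\mu_\xi)\times(C,\mu^C)$, with the fiber factor $(C,\mu^C)$ isomorphic to $([0,1],\mathrm{Leb})$ by the uniqueness of the continuous Lebesgue space. Under this identification $\xi$ becomes the partition of $(X/\xi,\mu_\xi)\times[0,1]$ into the fibers $\{y\}\times[0,1]$, so the fiber factor and the partition-within-the-product are the same for every partition with continuous conditional measures. Given two such partitions $\xi_1,\xi_2$, it then suffices to produce an isomorphism of the base factors $(X/\xi_1,\mu_{\xi_1})$ and $(X/\xi_2,\mu_{\xi_2})$ and to take its product with the identity on the fiber; this yields an automorphism~$T$ of $(X,\mu)$ with $T\xi_1=\xi_2\pmod0$, the desired isomorphism of partitions.

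The crux, and the only genuine invariant left, is thus the type of the quotient $(X/\xi,\mu_\xi)$. Continuity of the ambient space does not by itself force the quotient to be continuous (a~partition of $[0,1]$ into two halves already has atomless fibers but a~two-atom quotient), so the statement should be read as also asking the quotient to be continuous; under that reading both $(X/\xi_i,\mu_{\xi_i})$ are continuous probability Lebesgue spaces, again isomorphic to $([0,1],\mathrm{Leb})$ by the uniqueness theorem, and the base isomorphism of the previous step exists. The one point I~expect to need care is the $\operatorname{mod}0$ realization of the abstract splitting by an honest measure-preserving map, using the canonical system of measures of Theorem~\ref{th2}; this is precisely what the independent-complement construction supplies.
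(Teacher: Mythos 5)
Your proof is correct and follows exactly the route the paper intends: the corollary is placed immediately after the discussion of partitions with a single type of conditional measure precisely so that it follows from the independent-complement decomposition $(X,\mu)\cong(X/\xi,\mu_\xi)\times(C,\mu^C)$ together with the uniqueness of the continuous Lebesgue space, which is your argument verbatim. You are also right to flag that the literal statement tacitly assumes the quotient $(X/\xi,\mu_\xi)$ is continuous --- the model of the square partitioned into vertical intervals makes this reading explicit, and your two-halves example shows the hypothesis cannot be dropped.
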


This is the partition of the square $[0,1]^2$ endowed with the Lebesgue measure into vertical intervals. A~(non-unique) independent complement is the partition into horizontal intervals.

2.\;\textit{Finite partitions}. Partitions with finitely many elements (blocks) are called finite; the complete invariant of finite partitions, in the case where $(X,\mu)$ is a~continuous measure space, is the collection of measures of the elements; in this case,
$V_{\xi}\mu=\delta_{0}$, where $0\in\Sigma$ is the zero sequence.

3.\;\textit{Atomic partitions}. Let~$\xi$ be a~partition whose almost all elements are finite, more exactly, almost all measures~$\mu^C$ are atomic measures with finitely many atoms; such partitions are called discrete (though it would be more appropriate to call them ``cofinite''). A~measurable partition~$\xi$ of a~Lebesgue space $(X,\mu)$ with continuous measure will be called \textit{atomic} if the number of metric types of conditional measures is finite; in other words, if the $V_{\xi}$-image of~$\mu_{\xi}$ is a~measure on~$\Sigma$ with finitely many atoms.

4.\;A~\textit{semihomogeneous partition} is an atomic partition with finite blocks and uniform conditional measures on almost every block. The invariant of a~semihomogeneous partition is the collection of positive integers equal to the numbers of points in the blocks and the measures of the sets of all blocks with a~given number of points. A~\textit{homogeneous} (dyadic, triadic, etc.)\ partition is a~partition with the same number of points in all blocks; in this case, the
 $V_\xi$-image of $\mu_\xi$ is the $\delta$-measure at the point
 $$
(1/n,\dots (n) \dots,1/n)\in \Sigma.
$$

\subsection{Classification of finite filtrations}
\label{ssec2.4}
The study and classification of (finite or infinite) collections of $\sigma$-subalgebras, or, which is equivalent, measurable partitions, in Lebesgue spaces with continuous measures is perhaps the most general and difficult geometric problem of measure theory and probability theory. Already the case of two measurable partitions in general position is of great interest and involves combinatorial and algebraic difficulties; we will consider it elsewhere.

In this section, we first give a~classification of finite monotone sequences of measurable partitions, i.\,e., finite filtrations. It is quite simple. However, the main subject of the paper is the analysis of infinite filtrations.

The study of finite filtrations in a~space with continuous measure should be preceded by the study of such filtrations in the following finite measure space:
$$
(C,m),\qquad
C=\{c_1,c_2,\dots,c_k\},\quad
m(c_i)=m_i,\quad
i=1,\dots,k.
$$
A~filtration of length~$n$ in a~finite space is a~\textit{hierarchy}: $\xi_0$ is the partition of~$C$ into singletons; $\xi_1$ is a~partition of~$C$; $\xi_2$ combines some blocks of~$\xi_1$ into larger blocks,~etc.; the last partition~$\xi_{n-1}$ consists of a~single block, which is the whole space~$C$. It is convenient to think of a~hierarchy as a~tree  of rank~$n$, i.\,e., with~$n$ levels, endowed with a~measure (see Fig.~\ref{fig2}).

\begin{figure}[t!]
\centering
 \includegraphics{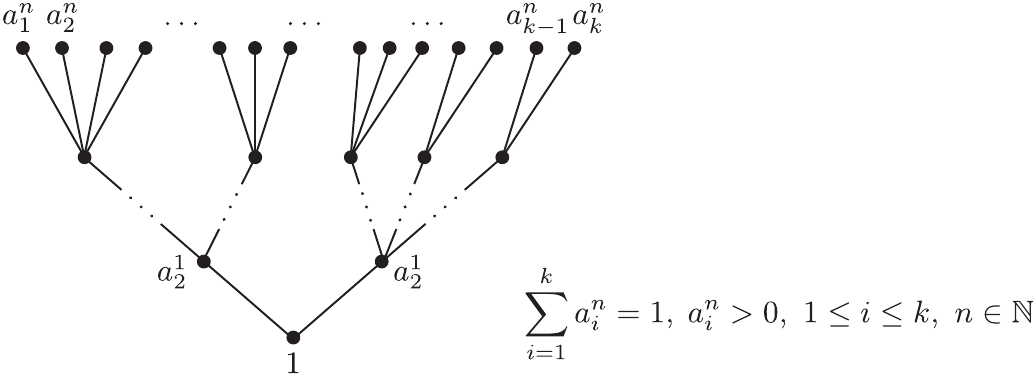}
\caption{A~finite filtration of a~finite measure space}
\label{fig2}
\end{figure}

For $n=1$, the tree has two levels: several vertices of the zero level and a~single vertex of the first level. The vertices of the zero level are linearly ordered according to the measures of the  points.

In the general case, the vertices of the~$r$th level are the elements of the partition~$\xi_r$, $r=0,1,\dots,n-1$, and an edge joining a~vertex of level $r-1$ with a~vertex of level~$r$ means an inclusion between the corresponding elements of partitions. The vertices of the zero level, i.\,e., the points of the original space, are ``leaves'' of the tree. They have the measures~$m_i$, $i=1,\dots,k$; let us introduce a~partial linear order on the leaves belonging to the same element of the partition~$\xi_1$ according to their measures (up to an arbitrary order on leaves of equal measure). Given these measures, one can recover, by a~simple summation, the conditional measures on the vertices of the first level, i.\,e., on the elements of the partition~$\xi_1$, and continue in the same way for the elements $\xi_2,\dots,\xi_{n-1}$ of the subsequent levels.

\begin{definition}
\label{def1} Denote by $\operatorname{Tree}_n$ the space of finite single-root trees of rank~$n$ (i.\,e., trees with~$n$ levels labeled by the numbers $0,1,\dots,n-1$ and a~unique vertex of the last level as a~root) endowed with a~strictly positive probability measure on the leaves, i.\,e., on the vertices of the zero level, and hence on the vertices of all levels; the values of the measure induce a~linear order on the vertices up to an arbitrary order of vertices with equal measure.
\end{definition}

One may say that $\operatorname{Tree}_n$ is the space of finite filtrations of length~$n$ on finite measure spaces.

In what follows, we will equip these trees with another important structure, a~metric, and the resulting category of trees endowed with a~measure and a~metric will be a~powerful tool in solving our problems.

Now we are ready to describe a~complete system of invariants of a~finite filtration of an arbitrary Lebesgue space. We consider only filtrations in which all measurable partitions are atomic:
$$
\{\xi_0\succ \xi_1 \succ \xi_2 \succ\dots\succ \xi_n\}=\{\xi_k\}_{k=1}^n,
$$
where $\xi_0=\varepsilon$ is the partition into singletons $\operatorname{mod} 0$.

\goodbreak

The fact that such a~sequence is decreasing means that almost every element~$C_n$ of the last partition~$\xi_n$ is finite and composed of finitely many elements of the partition~$\xi_{n-1}$, i.\,e.,
$$
C_n=\bigcup C^i_{n-1},\qquad C^i_{n-1}\in \xi_{n-1},
$$
the elements~$C^i_{n-1}$ are unions of elements of~$\xi_{n-2}$,
etc.

In other words, almost every element $C_n\in \xi_n$ supports a~hierarchy, i.\,e., a~finite filtration with a~measure. Thus we have a~homomorphism
$$
t_n\colon X/\xi_n \to \operatorname{Tree}_n,
$$
which sends almost every element $C_n\in \xi_n$, endowed with the conditional measures of its points, to the measured tree corresponding to the finite filtration on this element induced by the filtration on the whole space. Denote the $t_n$-image of the measure~$\mu_{\xi_n}$ on~$X/\xi_n$ by~$\omega_{\tau}$.

\begin{theorem}
\label{th4}
The complete system of isomorphism invariants of a~finite filtration
$\tau \equiv \{\xi_k\}_{k=0}^n$ of atomic partitions of a~Lebesgue space with continuous measure coincides with the system of invariants of the homomorphism~$t_n$ to the space~$\operatorname{Tree}_n$ of measured trees,
which consists of the measure~$\omega_{\tau}$ on~$\operatorname{Tree}_n$ obtained as the
$t_n$-image of the measure~$\mu_{\xi_n}$ on $X/\xi_n$ and the multiplicity function. If the filtration has finitely many types of conditional measures, then the complete invariant is simply the atomic measure~$\omega_{\tau}$ on the set
$\operatorname{Tree}_{n+1}$ of trees of rank $n+1$ (all multiplicities are continual).

In the case $n=1$ (i.\,e., the case of one atomic partition~$\xi_1$), this is a~measure on ordered probability vectors, in accordance with Theorem~\ref{th3}.
\end{theorem}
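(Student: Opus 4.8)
The plan is to classify $\tau$ by reducing to Rokhlin's classification of measurable maps (the result of~\cite{70} quoted after Theorem~\ref{th3}), in exact parallel with the way Theorem~\ref{th3} classifies a single partition through the map $V_\xi$: the role of the simplex $\Sigma$ is now played by the space $\operatorname{Tree}_n$ of measured hierarchies and that of $V_\xi$ by the homomorphism $t_n$. First I would check that $t_n\colon X/\xi_n\to\operatorname{Tree}_n$ is a well-defined measurable map $\operatorname{mod}0$. By Theorem~\ref{th2} applied to $\xi_n$, almost every block $C_n$ carries a conditional measure $\mu^{C_n}$; since all partitions are atomic, $\mu^{C_n}$ is supported on finitely many points, and the traces $\xi_{n-1}|_{C_n}\succ\dots\succ\xi_0|_{C_n}$ form a finite hierarchy on the finite set $C_n$ with leaf-measures given by $\mu^{C_n}$. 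This is precisely a point of $\operatorname{Tree}_n$, and measurability of $C_n\mapsto t_n(C_n)$ follows from the measurability in $C_n$ of the conditional measures guaranteed by Theorem~\ref{th2}.

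Next I would show that two filtrations $\tau$ and $\tau'$ are isomorphic if and only if the maps $t_n$ and $t'_n$ are metrically isomorphic, i.e., there is an isomorphism $\bar T\colon (X/\xi_n,\mu_{\xi_n})\to (X'/\xi'_n,\mu'_{\xi'_n})$ of quotient spaces with $t'_n\circ\bar T=t_n$. The forward direction is immediate $\operatorname{mod}0$: an isomorphism $T$ carrying each $\xi_k$ onto $\xi'_k$ descends to such a $\bar T$, and since $T$ maps the induced finite filtration on each block $C_n$ isomorphically onto that on $\bar T(C_n)$, the two blocks carry the same tree. The converse requires lifting $\bar T$ to an isomorphism $T$ of the ambient spaces respecting the whole filtration. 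Here one uses the rigidity of finite hierarchies already recorded before Definition~\ref{def1}: two finite filtrations of finite measure spaces are isomorphic exactly when they have the same measured tree, so over matched blocks a block-isomorphism exists and is unique up to the finite group of permutations of equal-measure leaves.

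The main obstacle is precisely this lifting: the block-isomorphisms must be chosen measurably in $C_n$, i.e., one needs a measurable intertwiner of the maps $t_n$ and $t'_n$. This is exactly what Rokhlin's classification of measurable maps~\cite{70} supplies: the complete metric invariant of a measurable map of a Lebesgue space into a standard Borel space is its image measure together with the multiplicity function, and coincidence of these invariants yields a measurable isomorphism intertwining the maps. Applying this to $t_n$, whose target $\operatorname{Tree}_n$ is standard Borel, the image measure $\omega_\tau=(t_n)_*\mu_{\xi_n}$ and the multiplicity function form a complete invariant; combined with the block-wise rigidity above this produces the required $T$, proving that $(\omega_\tau,\text{multiplicities})$ is a complete system of invariants of $\tau$.

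Finally, in the case of finitely many types of conditional measures only finitely many tree-types occur, so $\operatorname{Tree}_{n+1}$ may be treated as a countable set and $\omega_\tau$ is atomic on it. Since $\mu$ has no atoms while every block is finite, the quotient $(X/\xi_n,\mu_{\xi_n})$ is continuous, so every fiber of $t_n$ over a tree of positive mass is a continuous subspace and all multiplicities are continual, carrying no further information; hence $\omega_\tau$ alone is complete. For $n=1$ this reduces to a measure on ordered probability vectors, recovering Theorem~\ref{th3}.
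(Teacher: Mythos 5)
Your proposal is correct and follows essentially the same route as the paper: invariance is read off from the fact that the construction uses only conditional measures, and completeness is reduced to the classification of the measurable map $t_n$ by its image measure and multiplicity function (the invariants of measurable maps from~\cite{70}). The paper disposes of completeness in a single contrapositive sentence, whereas you supply the substantive content it leaves implicit --- the measurable blockwise lifting of the quotient isomorphism via the rigidity of finite measured hierarchies --- so your write-up is a legitimate filling-in of the same argument rather than a different proof.
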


The proof that these are invariants is obvious, since the construction used only invariants of partitions, namely,  systems of conditional measures. The completeness follows from the fact that two nonisomorphic filtrations must differ at some set of positive measure that is measurable with respect to the partition~$\xi_n$, and hence the images of the corresponding measures on~$\operatorname{Tree}_{n+1}$ do not coincide. Repeating what has been said in the case of one partition, one may think of the invariant of a~finite filtration as a~random tree endowed with a~measure.

It is not difficult to extend this procedure to classification of finite filtrations with arbitrary measurable (rather than only atomic) partitions: one should pass from finite trees to countable, or even continual, trees and general measures on them.

Earlier, classification of finite filtrations was considered in different terms (see, e.\,g.,~\cite{21}); the language of trees or hierarchies proposed above was not used. The classification itself is not very interesting; it could be useful if one could apply it to classification of infinite filtrations. But one cannot simply construct the inductive limit of the spaces $\operatorname{Tree}_n$ with respect to natural embeddings and consider measures on the resulting space as invariants of filtrations, and a~direct attempt to pass to the limit is of no use for classification of infinite filtrations. The reason is that when we pass to the limit, the bijectivity of the correspondence between classes of filtrations and finite invariants
is lost. In fact, the information related to finite filtrations useful for classification of infinite filtrations is somewhat different: finite fragments of a~filtration should be considered not by themselves, as we did above, but together with some functions or metrics, as we will do below.

\subsection{Filtrations we consider and how one can define them}
\label{ssec2.5}

\subsubsection{Classes and properties of filtrations}
\label{sssec2.5.1}
First of all, we mention the most interesting class of filtrations (to be considered in more detail in another paper). This is the class of filtrations in which all partitions have purely continuous conditional measures. Is is of great importance for random processes and the theory of  $C^*$-algebras, but it is not related to combinatorial problems.

The key notion is that of ergodicity of filtrations, called also Kolmogorov property,
regularity, etc.

A~filtration
$$
\tau=\{{\mathfrak A}_n\} \simeq \{\xi_n\} \simeq
\{L^{\infty}(\xi_n)\}
$$
is called \textit{ergodic} if
\begin{enumerate}
\item[--] the intersection of the $\sigma$-algebras~${\mathfrak A}_n$ is the trivial $\sigma$-algebra~${\mathfrak N}$:
$$
\bigcap_n {\mathfrak A}_n={\mathfrak N},
$$
\end{enumerate}
or
\begin{enumerate}
\item[--] the measurable intersection of the partitions~$\xi_n$ is the trivial partition
$$
\bigwedge_n \xi_n=\nu,
$$
\end{enumerate}
or
\begin{enumerate}
\item[--] the intersection of the algebras of functions constant on elements of the partitions~$\xi_n$ over all~$n$ consists of the constants:
$$
\bigcap_n L^{\infty}(X/\xi_n)=\{\operatorname{Const}\}.
$$
\end{enumerate}

In our further considerations, we deal with filtrations in which all partitions have atomic conditional measures without continuous components (filtrations with continuous conditional measures are left for another paper).

We distinguish the following classes: \textit{locally finite filtrations, homogeneous filtrations, and semihomogeneous filtrations}.

\begin{definition}
\label{def2} (i) A~filtration $\tau=\{\xi_n\}_{n=0}^{\infty}$
of a~Lebesgue space $(X,\mu)$ with continuous measure~$\mu$ is called
\textit{locally finite} if all partitions~$\xi_n$,
$n=0,1,2,\dots$, are atomic, i.\,e., almost all conditional measures on the elements of~$\xi_n$ are atomic, and the number of types of conditional measures is finite, in particular, for every~$n$ the number of atoms in almost all elements of~$\xi_n$ is finite (but, possibly, depends on~$n$).

(ii) A~locally finite filtration is called \textit{semihomogeneous} if the conditional measures of almost all elements of all partitions~$\xi_n$,
$n=0,1,2,\dots$, are uniform distributions (i.\,e., the measures of all points of a~given element are equal).\footnote{But the conditional measures of elements of the quotient partitions $\xi_n/\xi_{n-1}$ are not necessarily uniform.}

(iii) A~locally finite filtration is called \textit{homogeneous} if for every~$n$ the conditional measures of almost all elements of~$\xi_n$ are uniform and equal. If the number of elements in~$\xi_n$ is~$r_n$,
then the filtration is called
$\{r_n\}_n$-adic; for $r_n\equiv 2$, dyadic, and for
$r_n\equiv 3$, triadic.

\end{definition}

Relaxing the finiteness condition in the definition of a~locally finite filtration (for example, allowing elements of partitions to have countably many atoms or an arbitrary number of types of conditional measures) does not raise serious new difficulties and essentially new phenomena in the theory of filtrations, thus we restrict ourselves only to locally finite filtrations.

Consider also the following special classes.

\begin{definition}
\label{def3}
We say that a~filtration~$\tau$ is

(a) a~\textit{Bernoulli} filtration  if it is the tail filtration for an arbitrary sequence of independent random variables; in other words, if there exists a~sequence of independent partitions~$\eta_n$, $n=1,2,\dots$, in a~Lebesgue space with continuous measure such that
$$
\xi_n=\bigvee_{k=n}^{\infty} \eta_k,\qquad
n=0,1,2,\dots\,;
$$

(b) more generally, a~\textit{Markov} filtration if it is the tail filtration of a~one-sided  Markov chain with discrete time and arbitrary state space.
\end{definition}

Let us turn our attention to the following important class of Markov filtrations which generalizes random processes with independent increments.

Consider a~group~$G$ and  a~finitely supported probability measure~$m$ on~$G$. The Markov chain~$y_n$, $n=0,1,2,\dots$, corresponding to the (say, left) random walk on~$G$ with transition probability~$m$ generates a~Markov filtration. The ergodicity of this filtration is equivalent to the triviality of the Poisson--Furstenberg boundary. And the triviality criterion for the boundary is that the entropy vanishes (see~\cite{26}).

More generally, assume that the group~$G$ acts on a~Lebesgue space $(Z,\nu)$
with continuous (or even $\sigma$-finite) invariant  measure~$\nu$:
$$
\forall\,g \quad
T_g\nu=\nu.
$$
 This gives rise to a~one-sided Markov process~$\{z_n\}_n$ with the state space
$(Z,\nu)$ and transition probabilities
$$
\operatorname{Prob}(z|u)=m(g),\quad\text{where } z=T_gu,
$$
and the corresponding filtration, which will be considered in the upcoming sections. If~$m$ is the uniform measure on the generators of the group, then the filtration is homogeneous. Note that the state space of this process is continual, but the filtration is locally finite.

As we will see in the next section, in this case the filtration can be defined by a~sequence of finite partitions
$$
\eta_n,\quad
n=1,2,\dots,\qquad
\bigwedge_n \eta_n=\epsilon
$$
(where $\epsilon$ is the partition into singletons).
The filtration~$\{\xi_n\}_n$ is defined as follows:
$$
\xi_n=\bigvee_{k=n+1}^{\infty} \eta_k,\qquad
n=0,1,2,\dots\,.
$$

\subsubsection{The basis method of defining filtrations}
\label{sssec2.5.2}
The most efficient method of defining filtrations, both in measure  and Borel spaces, is to introduce first an increasing sequence of finite partitions (a~basis),
and then define a~filtration as the sequence of products of ``tails.'' We will call it the basis method.

Consider a~sequence of finite partitions (a~basis)
$$
\{\eta_n\}_{n=1}^{\infty},\qquad
\bigwedge_n \eta_n=\epsilon
$$
(where $\epsilon$ is the partition into singletons) of a~measure or Borel space.
A~filtration~$\{\xi_n\}_n$ is defined as follows:
$$
\xi_n=\bigvee_{k=n+1}^{\infty} \eta_k,\qquad
n=0,1,2,\dots\,.
$$

The basis method of defining filtrations naturally arises in connection with random processes with time~${\mathbb Z}_+$ and in other probabilistic situations (for example, $\eta_n$ is the partition corresponding to the $\sigma$-algebra of sets determined by the state of the process at time~$n$). We will use it systematically in Sec.~\ref{sec3} when defining tail filtrations on path spaces of graphs.

We emphasize that geometrically one and the same filtration can be defined via different bases, and the problem of (metric or Borel) isomorphism of filtrations defined by the basis method is quite nontrivial. Moreover, the conditional measures of the partitions~$\xi_n$ (i.\,e., equipments, see Sec.~\ref{ssec3.3}) in the case of measure spaces can be defined only by passing to the limit.

The relationship between two sequences of partitions, the increasing one (a~basis),
$$
\{\zeta_n\}_n:\quad
\zeta_n=\bigvee_{k=1}^n\eta_k, \quad
n=1,2,\dots,
$$
and the decreasing one (a~filtration),
$$
\{\xi_n\}_n:\quad
\xi_n=\bigvee_{k=n+1}^{\infty} \eta_k,\quad
n=0,1,2,\dots,
$$
which look as dual objects, is by no means symmetric: the theory of filtrations is much deeper  than the theory of increasing sequences, and the most important difference is in the asymptotic behavior. Namely, passing to the limit in the theory of filtrations is much finer than in the case of increasing sequences. In Sec.~\ref{sec8}, we show an example  from A.\,N.~Kolmogorov's paper with a~commentary by V.\,A.~Rokhlin, which demonstrates the absence of continuity of the product when passing to the limit in a~filtration. This fact distinguishes filtrations of subalgebras of functions from filtrations of subspaces in a~Hilbert space, where there is a~complete symmetry between increasing and decreasing sequences. The reason, of course, is  that the lattice of subalgebras (partitions), in contrast to the lattice of subspaces, has no canonical involution similar to passing to the orthogonal complement, which sends increasing sequences to decreasing ones and vice versa.

\subsection{The equivalence relation associated with a~filtration, cocycles and associated measures}
\label{ssec2.6}
Along with the canonical intersection of $\sigma$-algebras, which turns out to be trivial for ergodic filtrations, there is another set-theoretic intersection, and the corresponding well-defined equivalence relation on the measure space itself. Assume that we have a~locally finite filtration (or even just a~filtration with discrete elements) $\tau=\{\xi_n\}_n$.
Consider the monotone limit of the measurable partitions~$\xi_n$;
denote it by $\bigcap\limits_n \xi_n={\overline \xi}_{\tau}$
(as opposed to the measurable intersection $\bigwedge \xi_n$).

\begin{proposition}
\label{pr1} The partition $\bigcap\limits_n \xi_n$ is well defined as the monotone limit of the decreasing sequence of partitions~$\{\xi_n\}_n$. It is not measurable, since in the ergodic case there are no nonconstant measurable functions constant on  elements of this partition. Nevertheless, there is a~canonical object corresponding to this partition in a~function space (e.\,g., in~$L^2(X,\mu)$):
\begin{equation}
\label{eq2.1}
{\mathfrak C}=\biggl\{f\in L^2(X,\mu):\exists n, \
\int_{C} f(x)\,d\mu^C(x)=0\mbox{  for a.\,e.\  }C\in\xi_n\biggr\}.
\end{equation}
Clearly, the integral in~\eqref{eq2.1} vanishes also for all $m>n$. The linear space
${\mathfrak C}\subset L^2(X,\mu)$ is not closed, its closure is the orthogonal complement to the intersection
$\bigcap\limits_n L^{\infty}(X/\xi_n,\mu_{\xi_n})$ (in the ergodic case, the closure is the space of all functions with zero integral).
\end{proposition}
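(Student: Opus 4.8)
The plan is to translate each assertion into the language of the conditional expectation operators $P_{\xi_n}$ introduced above, which are the orthogonal projections in $L^2(X,\mu)$ onto the closed subspaces $H_n:=L^2(X/\xi_n,\mu_{\xi_n})$ of $\xi_n$-measurable functions. For the well-definedness of $\overline{\xi}_\tau=\bigcap_n\xi_n$, observe that, the sequence being decreasing ($\xi_n\succ\xi_{n+1}$), the $\xi_n$-block $C_n(x)$ through a point $x$ grows with $n$; I would define the block of $\overline{\xi}_\tau$ through $x$ to be the increasing union $\bigcup_n C_n(x)$. To check that these unions form a partition, suppose $\bigcup_n C_n(x)$ meets $\bigcup_n C_n(y)$ in a point $z\in C_n(x)\cap C_m(y)$; then for $N=\max(m,n)$ the blocks $C_N(x),C_N(y),C_N(z)$ coincide, being $\xi_N$-blocks sharing $z$, so the two unions agree. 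This is just the standard fact that an increasing union of equivalence relations is again an equivalence relation.

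For non-measurability, I would note that a function constant on the blocks of $\overline{\xi}_\tau$ is constant on each $C_n(x)$, hence $\xi_n$-measurable for every $n$, hence measurable with respect to $\bigcap_n\mathfrak A_n=\mathfrak N$; in the ergodic case this forces it to be a.e.\ constant. On the other hand, in the locally finite case each block $\bigcup_n C_n(x)$ is a countable, hence null, set, and there are uncountably many of them, so $\overline{\xi}_\tau$ is nontrivial. Since a measurable partition is recovered $\operatorname{mod}0$ from its algebra of constant-on-blocks functions, a nontrivial partition admitting only constant such functions cannot be measurable.

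The description of $\mathfrak C$ is then a reformulation: the condition $\int_C f\,d\mu^C=0$ for a.e.\ $C\in\xi_n$ says precisely $P_{\xi_n}f=0$, so $\mathfrak C=\bigcup_n\ker P_{\xi_n}$. The monotonicity claim is the tower property: for $m>n$ the ranges are nested, $H_m\subset H_n$, whence $P_{\xi_m}=P_{\xi_m}P_{\xi_n}$ and $P_{\xi_n}f=0$ gives $P_{\xi_m}f=0$; equivalently, integrating the vanishing fibrewise integral against the quotient measure of $\xi_n$ inside a block $C'\in\xi_m$ yields $\int_{C'}f\,d\mu^{C'}=0$ by Rokhlin's transitivity of conditional measures (Theorem~\ref{th2}). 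Thus the closed subspaces $\ker P_{\xi_n}=H_n^\perp$ form an increasing chain with union $\mathfrak C$. For the closure I would use the elementary duality for the decreasing chain of closed subspaces $H_n$: since each $H_n$ is closed, $\bigl(\bigcup_n H_n^\perp\bigr)^\perp=\bigcap_n H_n$, and therefore $\overline{\mathfrak C}=\overline{\bigcup_n H_n^\perp}=\bigl(\bigcap_n H_n\bigr)^\perp=\bigl(\bigcap_n L^\infty(X/\xi_n,\mu_{\xi_n})\bigr)^\perp$. That $\mathfrak C$ is not closed follows from Baire category: in the genuinely decreasing case the kernels $\ker P_{\xi_n}$ strictly increase, so if $\mathfrak C$ were closed it would be a Banach space equal to a countable union of proper closed subspaces, each nowhere dense, which is impossible. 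In the ergodic case $\bigcap_n H_n$ is the line of constants (the third form of ergodicity), so $\overline{\mathfrak C}=\{f\in L^2(X,\mu):\int f\,d\mu=0\}$, the last assertion.

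The main obstacle is not any single estimate but the bookkeeping: one must keep the set-theoretic limit $\overline{\xi}_\tau$ (nonmeasurable and nontrivial) cleanly separated from its measurable hull $\bigwedge_n\xi_n$ (trivial in the ergodic case), and one must verify the strict increase of the kernels $\ker P_{\xi_n}$ that drives the Baire-category argument for non-closedness. Once everything is phrased through the projections $P_{\xi_n}$ and orthogonality, the monotonicity and the closure formula become routine.
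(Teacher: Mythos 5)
Your functional--analytic treatment of $\mathfrak C$ is correct and in fact more complete than what the paper writes down: the identification $\mathfrak C=\bigcup_n\ker P_{\xi_n}=\bigcup_n H_n^\perp$, the tower property giving monotonicity, the duality $\overline{\bigcup_n H_n^\perp}=\bigl(\bigcap_n H_n\bigr)^\perp$, and the Baire-category argument for non-closedness (valid once one notes that an ergodic locally finite filtration on a continuous Lebesgue space cannot be eventually constant, so infinitely many of the inclusions $H_{n+1}\subset H_n$ are strict) are all sound. The non-measurability argument is also the intended one.

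The genuine gap is in the first claim, which is the only point the paper actually argues. In this paper ``well defined'' is meant $\operatorname{mod}0$: each $\xi_n$ is only a class of partitions agreeing off a null set, so one must check that choosing different representatives of the $\xi_n$ changes the set-theoretic limit $\overline\xi_\tau$ only on a null set. Your verification that the increasing union of blocks is again a partition is the easy, representative-wise statement and does not address this. The needed ingredient, which the paper flags explicitly (``it is important here that the partitions decrease and their elements are finite''), is that the saturation of a null set under the tail equivalence relation is null: if the representatives are altered on a null set $N=\bigcup_n N_n$, the blocks of $\overline\xi_\tau$ that can change are those meeting $N$, and since each block is a countable increasing union of finite $\xi_n$-blocks whose conditional measures are equivalent to restrictions of $\mu$, the union of all blocks meeting $N$ is a countable union of null sets. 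Without some such argument the object $\overline\xi_\tau$ is not well defined in the category the proposition lives in. (The paper's proof also records, and yours omits, the complementary remark that $\mathfrak C$ is determined by the filtration alone and hence is invariant under its automorphisms; this is immediate from your formula $\mathfrak C=\bigcup_n\ker P_{\xi_n}$.)
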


\begin{proof}
1. If we modify all partitions~$\xi_n$, $n=1,2,\dots$, at a~set of measure zero, then
${\overline \xi}_{\tau}$ will also change only on a~set of measure zero (it is important here that the partitions~$\xi_n$ decrease and their elements are finite).

2. The space~${\mathfrak C}$ is uniquely determined by the filtration and does not change under automorphisms of this filtration.

The proposition is proved.
\end{proof}

The space~${\mathfrak C}$ can be extended to an invariant functional analog of the intersection~${\overline\xi}_{\tau}$ itself rather than the filtration. To this end, in the definition of~${\mathfrak C}$ one should take functions~$f$  for which the zero integral condition is satisfied for almost all elements~$C$ of arbitrary measurable partitions greater than the intersection: $\xi\succ {\overline
\xi}_{\tau} $. This condition no longer changes when a~filtration is replaced with another filtration having the same intersection of partitions. This space is a~functional analog of a~tame partition.\footnote{This observation was made by the author in~\cite{88},~\cite{83}. Clearly, classification of tame partitions is equivalent to classification of nonclosed subspaces of the above form with respect to automorphisms, that is, unitary real multiplicative operators.}

The most important property of the intersection of partitions is that its (countable) elements support a~projective conditional measure, namely, for almost all pairs of points~$x$,~$y$ of its elements, the ratio of conditional measures $\dfrac{\mu^C(x)}{\mu^C(y)}$\,, where~$C$ is an element of an arbitrary measurable partition $\xi \succ \overline \xi$, does not depend on the choice of~$\xi$. This ratio is a~well-defined cocycle, which will be discussed in more detail in the next section (see Sec.~\ref{ssec3.5}).  In the case where~$\xi$ is the orbit partition for a~transformation with quasi-invariant measure (or for an amenable group), this cocycle is called the Radon--Nikodym cocycle. See also~\cite{94}.

The paradoxical role of the cocycle defined on the intersection of measurable partitions (on the tail equivalence relation) and constructed from a~given measure~$\mu$ on~$X$ (in the next section, a~measure is constructed on paths in a~graph) is that it can simultaneously be a~cocycle for other measures singular with respect to~$\mu$. Thus it turns out to be related to another notion $\operatorname{mod} 0$ and, consequently, to other classes of functions coinciding $\operatorname{mod} 0$. In other terms, this paradox can be stated as follows: let
$\tau=\{\xi_n\}_n$ be a~filtration; then the conditional measures of the partitions~$\xi_n$ for all~$n$ do not  uniquely determine the measure~$\mu$, there can exist other \textit{associated measures} that are singular with respect to~$\mu$, but have the same conditional measures with respect to the partitions~$\xi_n$.

The notion of cocycle and the problem of describing all measures with a~given cocycle appeared in the theory of Markov processes (works of E.\,B.~Dynkin~\cite{9, 10, 11}), in ergodic theory
(K.~Schmidt~\cite{52}), in the theory of graded graphs (works of the author and S.\,V.~Kerov, see~\cite{86}, and their followers). The problem of constructing all measures with a~given cocycle is a~far-reaching generalization of the traditional problem of describing all invariant measures for group actions. For more details, see~\cite{82} and Sec.~\ref{sec6}.

\section{Tail filtrations in graded graphs and Markov chains}
\label{sec3}

In this section we introduce  a~new circle of notions, which brings into the theory of filtrations a~substantially different point of view on basic notions and provides it with an enormous amount of examples. We use $\mathbb N$-graded graphs and multigraphs (Bratteli diagrams); the theory of such graphs and their links with
$C^*$-algebras were intensively studied in the 1970s--1980s (by O.~Bratteli, G.~Elliott, E.~Effros, D.~Handelman, and others, see~\cite{18},~\cite{86} and the references therein). New connections of this theory to dynamics and ergodic theory were discovered by the author in the 1970s and became a~source of new problems both in representation theory and dynamics; in part, they became the subject of asymptotic representation theory developed in the same years by the author together with S.\,V.~Kerov and their followers.

In this survey and several previous papers by the author, attention  is drawn, apparently for  the first time,  to the role of filtrations of $\sigma$-algebras and filtrations of subalgebras of  $C^*$-algebras in the analysis of the structure of  algebras themselves. But here we are primarily interested in the measure-theoretic structure of filtrations appearing in connection with graded graphs.

\subsection{Path spaces of graded graphs (Bratteli diagrams) and Markov compacta}
\label{ssec3.1}
Consider a~locally finite, infinite, $\mathbb N$-graded graph~$\Gamma$ (in other words, a~Bratteli diagram). The set of vertices of~order~$n$, $n=0,1,2,\dots$, will be denoted by~$\Gamma_n$
and called the $n$th \textit{level} of the graph;
$$
\Gamma=\coprod_{n\in \mathbb N} \Gamma_n,
$$
where $\Gamma_0$ consists of the unique initial vertex~$\{\varnothing\}$. We assume that every vertex has at least one successor and every vertex except the initial one has at least one predecessor. In what follows, we also assume that edges of~$\Gamma$
are simple.\footnote{Considering multigraphs, i.\,e., graphs with multiple edges,  in general gives nothing new for our purposes, since cotransition probabilities (equipments) introduced below replace and generalize the notion of multiplicities of edges. However, in what follows we use the language of multigraphs to simplify some statements.}

A~graded graph~$\Gamma$ gives rise canonically to a~locally semisimple algebra~$A(\Gamma)$ over~$\mathbb C$,
however here we consider neither this algebra no the relationship of the notions introduced below with this algebra and its representations.

A~path is a~(finite or infinite) sequence of adjacent edges of the graph (for graphs without multiple edges, this is the same as a~finite or infinite sequence of adjacent vertices). The number of paths from the initial vertex to a~given vertex is finite, which means exactly that the graph is locally finite. The space of all infinite paths in a~graph~$\Gamma$ is denoted by~$T(\Gamma)$; in a~natural sense, it is an inverse limit of the spaces~$T_n(\Gamma)$ of finite paths from the initial vertex to the vertices of the $n$th level. Thus
$T(\Gamma)$ is a~Cantor-like compactum.

Cylinder sets in the space $T(\Gamma)$ are sets determined by conditions on initial segments of paths up to level~$n$; these sets are clopen and define a~base of a~topology. One can naturally define the notion of the \textit{tail equivalence relation~$\tau_{\Gamma}$} on~$T(\Gamma)$: two infinite paths are equivalent if they eventually coincide; we also say that these two paths lie in the same block of the tail partition.

The \textit{tail filtration} (for the moment, a~Borel one, without any measure)
$$
\Xi(\Gamma)=\{{\mathfrak A}_0 \supset {\mathfrak A}_1 \supset
\dotsb\}
$$
is the decreasing sequence of $\sigma$-algebras of Borel sets where the $\sigma$-algebra~${\mathfrak A}_n$, $n \in \mathbb N$, consists of all Borel subsets in~$T(\Gamma)$  that together with every path~$\gamma$ contain  all paths coinciding with~$\gamma$ up to level~$n$. In a~clear sense, the $\sigma$-algebra~${\mathfrak A}_n$ is complementary to the finite $\sigma$-algebra of cylinder sets of order~$n$.

\subsection{Trees of paths}
\label{ssec3.2}
Let us make a~general remark on the path space of a~graded graph: the paths joining a~given vertex with the initial one form a~tree whose root is the given vertex, and each path (a~leaf of the tree) is endowed with a~measure.

The connection between the tree constructed from the paths of a~graph and that constructed from a~measurable partition is obvious.

\begin{definition}
\label{def4}
Every vertex~$v$ of level~$n$ in a~graded graph determines in a~natural way a~tree of rank~$n$, which is composed of the paths leading from the initial vertex to~$v$; these trees agree in a~natural sense: the tree corresponding to a~vertex~$u$ is embedded as a~subtree into the tree corresponding to a~vertex~$v$ if $v$ succeeds~$u$. Leaves of the tree correspond to paths in the graph.
\end{definition}

\begin{proposition}
\label{pr2}
The measured trees  constructed in Sec.~\ref{ssec2.4} from finite fragments of an infinite filtration~$\tau$
are exactly the trees constructed from the vertices of the graph~$\Gamma(\tau)$. (The possibility of combining them into an inductive limit that determines the type of the filtration up to finite isomorphism was discussed above. However, we do not need this, since the type of the filtration is determined by the minimal model itself.)
\end{proposition}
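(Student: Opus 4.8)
The plan is to unwind the two constructions and produce a canonical, measure-preserving identification of the two families of rooted measured trees; the whole argument amounts to aligning definitions, with no analytic estimate required.

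First I would record both objects precisely. On the filtration side (Sec.~\ref{ssec2.4}) a finite fragment $\{\xi_k\}_{k=0}^{n}$ assigns, to almost every block $C_n\in\xi_n$, the hierarchy $t_n(C_n)$: its leaves are the points of $C_n$ (the singletons of $\xi_0=\varepsilon$) carrying the conditional measure $\mu^{C_n}$ furnished by Theorem~\ref{th2}, its vertices at level $r$ are the blocks of $\xi_r$ contained in $C_n$, its root is $C_n$ itself, and an edge joining level $r-1$ to level $r$ records the inclusion of a $\xi_{r-1}$-block in a $\xi_r$-block. On the graph side (Definition~\ref{def4}) a vertex $v$ of level $n$ in $\Gamma(\tau)$ yields the tree of all finite paths $\varnothing\to v$, whose leaves are these paths, each endowed with a measure. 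The goal is to see that, as measured rooted trees, these two families coincide.

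Next I would pass through the canonical (minimal) model, in which $\tau$ is realized as the tail filtration on the path space $T(\Gamma(\tau))$ of Sec.~\ref{ssec3.1}; by local finiteness the image $t_n(X/\xi_n)$ is the finite set of measured-tree types $\operatorname{supp}\omega_\tau$, which I take to be the vertex set $\Gamma_n$, sending each block $C_n\in\xi_n$ to the vertex $v$ representing its type $t_n(C_n)$. Under this realization the points of $C_n$ are in canonical bijection with the finite paths $\varnothing\to v$: fixing the tail beyond level $n$, a point of $C_n$ is exactly an initial path segment, i.e.\ a leaf of the path-tree. Moreover, two points of $C_n$ lie in the same $\xi_r$-block iff the corresponding paths agree from level $r$ onward, which is precisely the rule by which the path-tree branches; hence the bijection is an isomorphism of rooted trees, with $C_n$ corresponding to the root $v$. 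It then remains only to note that this bijection carries $\mu^{C_n}$ to the leaf-measure on paths, both being read off from the same canonical system of conditional measures.

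Hence $t_n(C_n)$ and the tree attached to the vertex $v=t_n(C_n)$ are one and the same measured tree, the two families agree exactly, and the subtree embeddings of Definition~\ref{def4} reproduce the restriction of a fragment of length $n$ to one of length $n-1$ (the refinement $\xi_{r-1}\succ\xi_r$). The main obstacle is not a hard step but the bookkeeping: one must invoke the finitely-many-types hypothesis to obtain a genuine locally finite $\Gamma(\tau)$, and one must carry the identifications of blocks with vertices and of conditional measures with leaf-measures $\operatorname{mod} 0$, so that everything is well defined almost everywhere. This is exactly the ``obvious'' connection flagged just before Definition~\ref{def4}, here made explicit.
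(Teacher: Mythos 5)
Your argument is correct and coincides with what the paper intends: the paper gives no proof of Proposition~2 beyond the preceding remark that the connection is ``obvious,'' because the statement holds by construction of the minimal model --- the vertices of the $n$th level of $\Gamma(\tau)$ are precisely the types of the measured trees $t_n(C)$, and the edges, their multiplicities, and the equipment are chosen so that the tree of paths from $\varnothing$ to a vertex reproduces that measured tree, which is exactly the identification you spell out. One small correction of wording: $\tau$ is only \emph{finitely} isomorphic to the tail filtration on $T(\Gamma(\tau))$ (the passage to the minimal model is a homomorphism, not in general an isomorphism, of filtrations), but since the proposition concerns only finite fragments, your level-by-level matching of blocks with vertices, points with paths, branching with branching, and conditional measures with leaf measures is unaffected.
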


Thus an infinite path in a~graded graph gives rise to an inductive limit of a~sequence of nested trees corresponding to successive vertices of this path. The sequence of the roots of these trees has no limit, and the increasing sequence of the nested sets of leaves of the trees constitutes the set of all infinite paths in the graph cofinal with the given one; the number of such paths, as well as the number of vertices at every level of the trees, goes to infinity. The measures on the leaves have no limit, but there arises a~cocycle of ratios of measures. Thus we have obtained a~rootless tree whose all levels are infinite. It is easier to view this tree as an infinite sequence of hierarchies on the infinite set of its leaves supporting the cocycle of ratios of conditional measures.

Of course, the construction depends on the choice of an infinite path. Here one should take into account whether the filtration is ergodic or not, but  we will not need to study this problem.

At the same time, it is clear that this construction cannot be used to advance the classification of finite filtrations from Sec.~\ref{ssec2.4} towards a~classification of infinite filtrations, since it takes into account only finite invariants of filtrations, which do not suffice for this purpose. However, these data suffice for classification of filtrations up to
 \textit{finite isomorphism}, which means the existence of a~complete system of Borel invariants for this classification.

\subsection{An equipment of a~multigraph and cotransition probabilities}
\label{ssec3.3}
We introduce an additional structure on a~(multi)graph, a~\textit{system of cotransition probabilities}, or an \textit{equipment},
$$
\Lambda=\{\lambda=\lambda_v^u; \ u\in \Gamma_n,\ v\in \Gamma_{n+1},\
(u,v)\in \operatorname{edge}(\Gamma_n,\Gamma_{n+1}), \ n=0,1,2,\dots\},
$$
by associating with each vertex $v \in \Gamma_n$ a~probability vector whose component~$\lambda_v^u$ is the probability of an edge $u\prec v$ that comes to~$v$ from the previous level; thus
$\displaystyle\sum_{u\colon u\prec v}\lambda_v^u=1$; $\lambda_v^u\geqslant 0$.
In the case of a~multigraph, probabilities are assigned to each edge from the set of edges joining vertices~$u$,~$v$ such that~${u\prec v}$.

\begin{definition}
\label{def5}
An \textit{equipped multi(graph)} is a~pair $(\Gamma,\Lambda)$ consisting of a~(mul\-ti)\-graph~$\Gamma$ and a~system~$\Lambda$ of cotransition probabilities on the edges of~$\Gamma$. An equipment allows one to define the probability of a~path leading from~$\varnothing$ to a~given vertex as the product of cotransition probabilities over all edges constituting the path.

The most important special case of an equipment (i.\,e., of a~system~$\Lambda$ of cotransition probabilities), which is called the
 \textit{canonical}, or
\textit{central}, equipment and is studied in combinatorics, representation theory, and algebraic situations, is as follows:
$$
\lambda_v^u=\frac{\dim(u)}{\sum_{u:u\prec v} \dim(u)}\,,
$$
where $\dim(u)$ is the number of paths from the initial vertex~$\varnothing$
to a~vertex~$u$ (in representation-theoretic terms, this is the dimension of the representation of the algebra~$A(\Gamma)$ corresponding to the vertex~$u$).
\end{definition}

One can easily see that for the central equipment, the measure on the set of paths leading from~$\varnothing$ to a~given vertex is uniform, and the cotransition probability to achieve a~vertex~$v$ from a~preceding vertex~$u$ is proportional to the fraction (among all the paths that lead from~$\varnothing$ to~$v$) of those paths that pass through~$u$.

The canonicity of this system of cotransition probabilities is in the fact that it is determined only by the graph itself. The corresponding Markov measures on the path space~$T(\Gamma)$ are called
\textit{central measures}; up to now, the study of Bratteli diagrams was restricted to only these measures. In terms of the theory of $C^*$-algebras, central measures are traces on the algebra~$A(\Gamma)$, and ergodic central measures are indecomposable traces. For more details on the case of central measures, see~\cite{77}, Sec.~7 below, and the extensive bibliography which can be found in the papers from our (incomplete) list of references.

Measures on the path space of a~graph that agree with the canonical equipment are called central, and in the theory of stationary Markov chains they are called measures of maximal entropy.

\begin{figure}[t!]
\centering
 \includegraphics{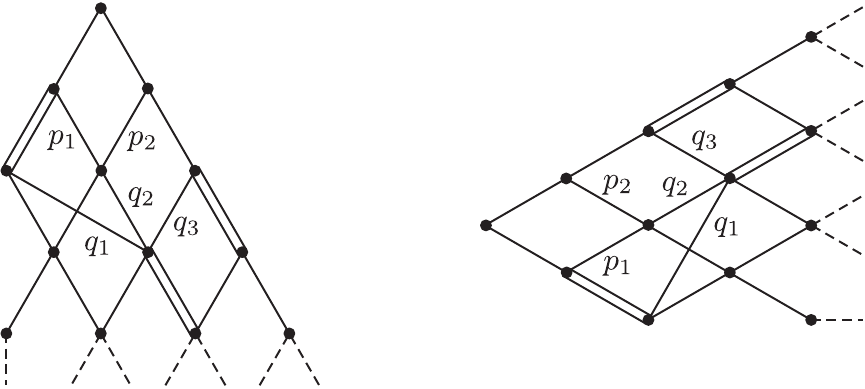}
\caption{A~graph of paths and a~Markov chain: rotation by $90^\circ$}
\label{fig3}
\end{figure}

\subsection{The Markov compactum of paths}
\label{ssec3.4}
The term ``cotransition probabilities'' is borrowed from the theory of Markov chains~\cite{9}: if we regard a~graph, rotated by~$90^\circ$  (see Fig.~\ref{fig3}), as the set of states of a~Markov chain starting from the state~$\varnothing$
at time $t=0$, and the numbers of levels as moments of time, then $\Lambda=\{\lambda_v^u\}$ should be viewed as the system of cotransition probabilities for the Markov chain:
$$
\operatorname{Prob}\{x_{t}=u\mid x_{t+1}=v\}=\lambda_v^u.
$$

Recall the definition of a~Markov compactum in the generality we need.

\begin{definition}
\label{def6}
Consider a~sequence~$\{X_n\}_{n=1}^{\infty}$ of finite sets, and associate with it a~sequence of ``multi-edges,'' i.\,e., a~collection of matrices~$M_n$ of size
($|X_n|=:$) $d_n\times d_{n+1}$ ($:=|X_{n+1}|$) in which an element~$m_{u,v}$,
$u \in X_n$, $v \in X_{n+1}$, is a~nonnegative integer equal to the number of edges joining~$u$ and~$v$; to each such edge  we assign a~nonzero number.

A~\textit{path (trajectory)} is a~sequence of adjacent edges.
If all the multiplicities~$m_{u,v}$ are equal to zero or one, then a~path is a~sequence of adjacent vertices.

The space of all paths (trajectories) endowed with the natural topology of an inverse spectrum of sets of finite paths is called a~\textit{Markov (topological) compactum}. A~Markov compactum is called stationary if the state spaces and the matrices do not depend on~$n$:
$$
X_n\equiv X_1,\qquad
M_n\equiv M_1,\quad
n=1,2,\dots;
$$
in this case, one can consider not only the one-sided, but also the two-sided Markov compactum.

A~Markov compactum with an equipment (or with cotransitions) is a~Markov compactum in the sense of the above definition in which elements~$m_{u,v}$ of the matrix~$M_n$ are vectors whose coordinates are positive numbers not exceeding~$1$ corresponding to the probabilities of edges going from  $u\in X_n$ to~$v\in X_{n+1}$, so that the sum of all probabilities along a~row (over all edges going to~$v$) is equal to~$1$.
\end{definition}

Sometimes, a~Markov compactum (either equipped or not) will be called a~Mar\-kov chain. We are going to consider Markov measures on a~(usually, nonstationary) Markov compactum.

One can see from the definition that the path space of a~graded (multi)graph is a~Markov compactum and an equipment of the graph defined above is an equipment of the Markov compactum. In what follows, we do not distinguish between these two languages: that of Markov compacta and that of path spaces of graded (multi)graphs. To begin with, these two (seemingly quite different) areas of mathematics, the theory of Markov processes and the theory of approximately finite-dimensional algebras (together with its combinatorial ``lining,''  the analysis of graded multigraphs), are in fact different versions of the same theory, and the multigraph corresponding to a~Markov chain is exactly a~Bratteli diagram. However, the settings of the problems are different, being algebraic in one case and probabilistic in the other one, and the traditions of drawing graphs are different too.

It may seem that the difference between these two seemingly distinct theories can be removed by rotating the figure by~$90^\circ$. However, it is important to overcome this difference conceptually and to understand to what extent both areas, the theory of graded graphs and
$\operatorname{AF}$-algebras on the one hand and the theory of Markov chains on the other hand, can be enriched by exchanging their dissimilar problems and intrinsic methods.

For example, I have long ago raised perhaps paradoxical problems about the $K$-functor of a~Markov chain and ergodic $K$-theory, or about Shannon-type theorems for $\operatorname{AF}$-algebras (see~\cite{87}).

Note that the analysis of asymptotic properties of the sequence of matrices~$M_n$ introduced above is also of great independent interest from the technical point of view. One may say that the whole theory of Markov compacta of paths in graded graphs we consider here is a~part of asymptotic theory of infinite products of Markov matrices, which is  important in itself and also related to the theory of invariant measures, phase transitions, etc.

\subsection{A~cocycle of an equipped graph}
\label{ssec3.5}
The notion of equipment is related to the notion of cocycle on the path space of a~graph. Consider a~function~$c(\,\cdot\,{,}\,\cdot\,)$ on the set of pairs of cofinal (i.\,e., eventually coinciding) paths on the graph with values in the set of nonnegative real numbers. Assume that
$$
c(t,s)c(s,t)=1,\quad
c(t,s)=c(t,z)c(z,s),\quad
c(t,t)=1
$$
for all pairs of cofinal paths~$(s,t)$. Sometimes, instead of ``$2$-cocycle'' one says simply ``cocycle on an equivalence relation''; the definition makes sense for an arbitrary equivalence relation. A~cocycle is said to be trivial, or cohomologous to~$1$, if there exists a~nonnegative function~$g$ on paths such that
$$
c(t,s)=\frac{g(t)}{g(s)}\,.
$$
A~simple example: for a~measurable partition of a~space, the function
\begin{equation}
\label{eq3.1}
c(t,s)=\frac{\mu^C(t)}{\mu^C(s)}
\end{equation}
(the ratio of the conditional measures of points lying in the same element of the partition) is a~trivial cocycle.

\begin{lemma}
\label{lem1}
For every locally finite filtration in a~Lebesgue space with continuous measure there is a~well-defined cocycle on the tail equivalence relation.
\end{lemma}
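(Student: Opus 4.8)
The plan is to build the cocycle level by level from the conditional measures furnished by Theorem~\ref{th2}, and then to show that these level-wise pieces are mutually consistent, so that they glue into a single function on the tail equivalence relation. For a locally finite filtration the block $C_n\in\xi_n$ through a generic point is finite, and the conditional measure $\mu^{C_n}$ is atomic with strictly positive weights on its atoms. Two points $x,y$ are tail equivalent precisely when they lie in a common block $C_n\in\xi_n$ for all sufficiently large $n$; for such a pair I set
$$
c(x,y)=\frac{\mu^{C_n}(x)}{\mu^{C_n}(y)}.
$$
By the elementary computation in~\eqref{eq3.1}, for each fixed $n$ this formula already defines a (trivial) cocycle on the equivalence relation generated by $\xi_n$ alone, so the only genuine issue is that the right-hand side must not depend on the level $n$ at which $x$ and $y$ first share a block.

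The crux is therefore the independence of $n$, which I would deduce from the transitivity (tower) property of Rokhlin's canonical systems of conditional measures under refinement. Fix $n<m$, so that $\xi_n\succ\xi_m$, and let $C_n\in\xi_n$, $C_m\in\xi_m$ with $C_n\subset C_m$ and $x,y\in C_n$. Disintegrating $\mu^{C_m}$ along the partition of $C_m$ into its $\xi_n$-subblocks, the conditional measures appearing in the disintegration coincide $\operatorname{mod}0$ with the global conditional measures $\mu^{C}$, $C\in\xi_n$; hence
$$
\mu^{C_m}(x)=\mu^{C_m}(C_n)\,\mu^{C_n}(x),\qquad
\mu^{C_m}(y)=\mu^{C_m}(C_n)\,\mu^{C_n}(y),
$$
where $\mu^{C_m}(C_n)>0$ is the conditional weight of the block $C_n$ inside $C_m$. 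Dividing, the common factor cancels, so $\mu^{C_m}(x)/\mu^{C_m}(y)=\mu^{C_n}(x)/\mu^{C_n}(y)$ and $c(x,y)$ is unambiguous.

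Once level-independence is in hand, the cocycle identities are formal: given finitely many tail-equivalent points I choose $n$ so large that they all lie in one block $C_n\in\xi_n$, and read off $c(x,x)=1$, $c(x,y)\,c(y,x)=1$, and $c(x,y)=c(x,z)\,c(z,y)$ directly from the ratios of the single atomic measure $\mu^{C_n}$. The genuinely delicate point, which I expect to be the main obstacle, is the $\operatorname{mod}0$ bookkeeping. Each disintegration identity above holds only on a conull set of blocks, and I need them to hold simultaneously for every pair of levels $n<m$; since there are only countably many such pairs, intersecting the corresponding full-measure sets yields a single conull set off which the atomic conditional measures, their positivity, and all the tower relations hold at once. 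On this set $c$ is a well-defined cocycle on the tail equivalence relation, and by the stability argument of Proposition~\ref{pr1} it is insensitive to null modifications of the partitions $\xi_n$, which completes the proof.
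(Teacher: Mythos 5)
Your proposal is correct and follows essentially the same route as the paper: define $c$ as the ratio of conditional measures on a common block of $\xi_n$ and invoke the transitivity (tower) property of Rokhlin's canonical systems to see that the ratio is independent of the level. The paper states this transitivity in one sentence; you have merely written out the cancellation and the countable $\operatorname{mod}0$ bookkeeping explicitly, which is a faithful elaboration rather than a different argument.
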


\begin{proof}
Consider two paths~$s$,~$t$ lying in the same element of the partition~$\xi_n$, and define~$c(t,s)$ to be the ratio of their conditional measures (see~\eqref{eq3.1}). As follows from the definition, conditional measures have the transitivity property, i.\,e., the ratio does not change if we consider the conditional measures of the partition~$\xi_m$,
$m>n$. The lemma is proved.
\end{proof}

Assume that we have an equipment of a~graded graph~$\Gamma$ (or a~Markov compactum); the cocycle corresponding to the equipment is defined as follows: for two paths
$$
t=(t_1,t_2,\dots,t_n,\dots),\quad
s=(s_1,s_2,\dots,s_n,\dots),
$$
with $t_i=s_i$ for $i\ge n+1$, we have
$$
c(t,s)=\prod_{i=1}^{n-1}
\frac{\lambda_{t_{i+1}}^{t_i}}{\lambda_{s_{i+1}}^{s_i}}\,.
 $$
Such a~cocycle will be called a~\textit{Markov} cocycle. Of course, there exist non-Markov cocycles.

\textit{Every Borel probability measure defined on the path space of a~graph (the space of trajectories of a~Markov chain) determines a~system of cotransition probabilities; namely, it is the system of conditional measures of the measurable partitions~$\xi_n$}. One says that a~measure \textit{agrees} with a~given system~$\Lambda$ of cotransition probabilities (with a~given cocycle) if the collection of the corresponding cotransition probabilities for all vertices coincides with~$\Lambda$.

\begin{lemma}
\label{lem2}
If the tail filtration on the path space of an equipped graph is ergodic with respect to a~given measure (or simply the measure is ergodic), then the cocycle constructed from the equipment is nontrivial.
\end{lemma}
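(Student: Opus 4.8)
The plan is to argue by contradiction: assuming the equipment cocycle $c$ is trivial, I would manufacture from a trivializing function an equivalent measure that is invariant under the tail equivalence relation, and then show that ergodicity makes such a measure impossible in the situation at hand.

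First I would identify the equipment cocycle with the Radon--Nikodym cocycle of $\mu$. Since $\mu$ agrees with the equipment $\Lambda$, its conditional measures $\mu^{C}$ on the elements $C\in\xi_n$ are exactly the path weights prescribed by $\Lambda$; hence, by Lemma~\ref{lem1} and~\eqref{eq3.1}, the Markov cocycle $c$ coincides $\mu$-a.e.\ with $c(t,s)=\mu^{C}(t)/\mu^{C}(s)$ for all $t,s$ lying in a common $C\in\xi_n$. If $c$ were trivial, say $c(t,s)=g(t)/g(s)$ with $g>0$ measurable, this identity would force $\mu^{C}(t)/g(t)$ to be constant along each $C\in\xi_n$, so that $\mu^{C}(t)=g(t)\big/\sum_{s\in C}g(s)$.

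Next I would pass to the measure $\nu$ with $d\nu/d\mu=1/g$. The displayed formula shows that the conditional measures of $\nu$ on every element of every $\xi_n$ are uniform; equivalently $\nu$ is invariant under all finite rearrangements of paths within a tail class, i.e.\ $\nu$ is a central measure (in the sense of Definition~\ref{def5}) lying in the measure class of $\mu$. Because $\nu\sim\mu$, the ergodicity of the tail filtration passes from $\mu$ to $\nu$, and the function $G_\infty(t)=\sum_{s\sim t}g(s)$ --- the total $g$-mass of the tail class of $t$ ($s\sim t$ meaning $s$, $t$ are cofinal), obtained as the increasing limit of the finite sums $\sum_{s\in C_n(t)}g(s)$ --- is tail-invariant, hence $\mu$-a.e.\ equal to a constant.

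The main obstacle, and the real content of the lemma, is to turn the existence of this equivalent invariant measure into a genuine contradiction. The delicate point is the dichotomy between $G_\infty<\infty$ and $G_\infty=\infty$, i.e.\ whether $\nu$ can be normalized to a probability measure or is only $\sigma$-finite: a trivial cocycle is precisely the assertion that the tail equivalence relation carries an equivalent invariant measure, and this must be excluded. In the genuinely quasi-invariant situations that motivate the lemma --- notably the tail filtrations of transient random walks, where ergodicity corresponds to triviality of the Poisson--Furstenberg boundary --- no equivalent invariant probability measure exists, so the putative trivialization $g$ cannot exist and $c$ is forced to be nontrivial. I would therefore concentrate the final step on ruling out an equivalent central measure in the ergodic class, the careful handling of the infinite-invariant-measure case being where essentially all the difficulty resides.
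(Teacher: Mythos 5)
Your setup runs parallel to the paper's own argument, which is a single sentence: triviality of the cocycle would produce measurable functions constant on tail equivalence classes, and this contradicts ergodicity. You reach the same juncture in more detail --- from $c(t,s)=g(t)/g(s)$ you get $\mu^{C}(t)=g(t)\big/\sum_{s\in C}g(s)$ on each $C\in\xi_n$, and the increasing, tail-invariant function $G_n(t)=\sum_{s\in C_n(t)}g(s)=g(t)/\mu^{C_n(t)}(t)$ whose limit $G_\infty$ ergodicity forces to be a.e.\ constant. But a constant (possibly infinite) $G_\infty$ is not yet a contradiction, and you explicitly stop there, conceding that ruling out the equivalent invariant measure is ``where essentially all the difficulty resides.'' That deferred step is the entire content of the lemma, so the proposal is not a proof but a reduction to the statement being proved.

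Moreover, the contradiction you plan to extract --- ``ergodic $\Rightarrow$ no equivalent invariant ($\sigma$-finite or probability) measure'' --- is simply false in the setting of the lemma, so the final step would fail as written. For the canonical equipment the cocycle is identically $1$ and hence trivial with $g\equiv 1$, yet ergodic central measures are plentiful (the Bernoulli measure on the Glimm graph of beads, the Plancherel measure on the Young graph); there $\mu^{C_n}(t)\to 0$, your $G_\infty$ is identically $+\infty$, and no contradiction ensues. Your own hedge --- restricting to ``genuinely quasi-invariant situations'' such as walks with trivial Poisson--Furstenberg boundary --- is an admission that the argument only closes under hypotheses absent from the statement. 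Any honest completion must locate a \emph{nonconstant} measurable function constant on tail classes (which is what the paper's one-line proof asserts triviality provides), rather than hope that ergodicity excludes an equivalent invariant measure; as it stands, the proposal identifies the difficulty correctly but does not overcome it.
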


Indeed, the triviality of the cocycle would mean the existence of measurable functions constant on tail equivalence classes, which contradicts the ergodicity.

The canonical equipment gives rise to the cocycle identically equal to~$1$, and measures that agree with this cocycle are central, or invariant, measures.

The Markov property of central measures on an arbitrary graded graph follows automatically from their definition, as was observed in old papers by the author and S.\,V.~Kerov~\cite{86},~\cite{30}.

Recall that a~system of cotransition probabilities does not in general allow one to uniquely determine a~system of transition probabilities
$$
\operatorname{Prob} \{x_{t+1}=v\mid x_t=u\};
$$
in other words, this system does not in general uniquely determine the Markov chain. On the contrary,
the transition probabilities uniquely determine the Markov chain if an initial distribution is given.
 An analog of the initial distribution for cotransition probabilities is the final distribution, i.\,e., a~measure on the boundary (see below), which is the collection of all ergodic measures with given cotransitions.

A~measure on the path space of a~graph will be called \textit{ergodic} if the tail $\sigma$-algebra (i.\,e., the intersection of all $\sigma$-algebras of the tail filtration) is trivial $\operatorname{mod} 0$.

\textit{One of our purposes is to enumerate all ergodic Markov measures with~a given system of cotransition probabilities. The list of such measures will be called the absolute boundary, or the absolute, of an equipped Markov compactum, or the absolute of a~graded multigraph. This is a~topological boundary, and, as we will see, the Choquet boundary of a~certain simplex (a~projective limit of finite-dimensional simplices).}

\subsection{Examples of graded graphs and Markov compacta}
\label{ssec3.6}
The list of graded graphs and multigraphs includes a~series of classical examples: the Pascal graph (already known to ancient Chinese mathematicians), which is the lattice~${\mathbb Z}^2_+$ graded along the diagonal (see Fig.~\ref{fig4}), and its multidimensional generalizations~${\mathbb Z}^d_+$, $d>2$, the Euler graph, the Young graph (see Fig.~\ref{fig5}), and, more generally,  Hasse diagrams of lattices, primarily distributive lattices. Among the more recent examples, we can mention the graph of unordered pairs, or the tower of dyadic measures
 (\cite{69},~\cite{82}, see Sec.~\ref{ssec7.3} below), the graph of ordered pairs~\cite{91}, etc., which appeared in connection with the theory of filtrations and the adic realization of ergodic automorphisms~\cite{67},~\cite{37},~\cite{89}.

\begin{figure}[h!]
\centering
 \includegraphics{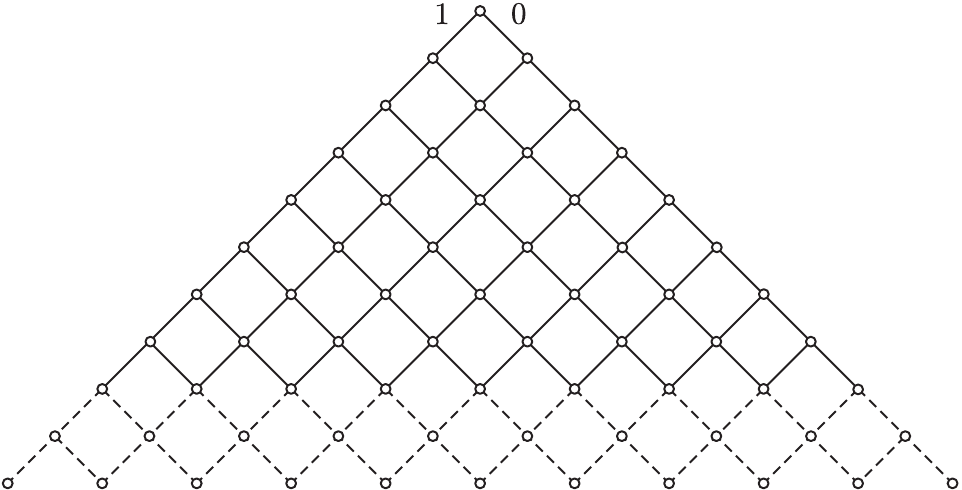}
\caption{The Pascal graph}
\label{fig4}
\end{figure}

\begin{figure}[h!]
\centering
 \includegraphics{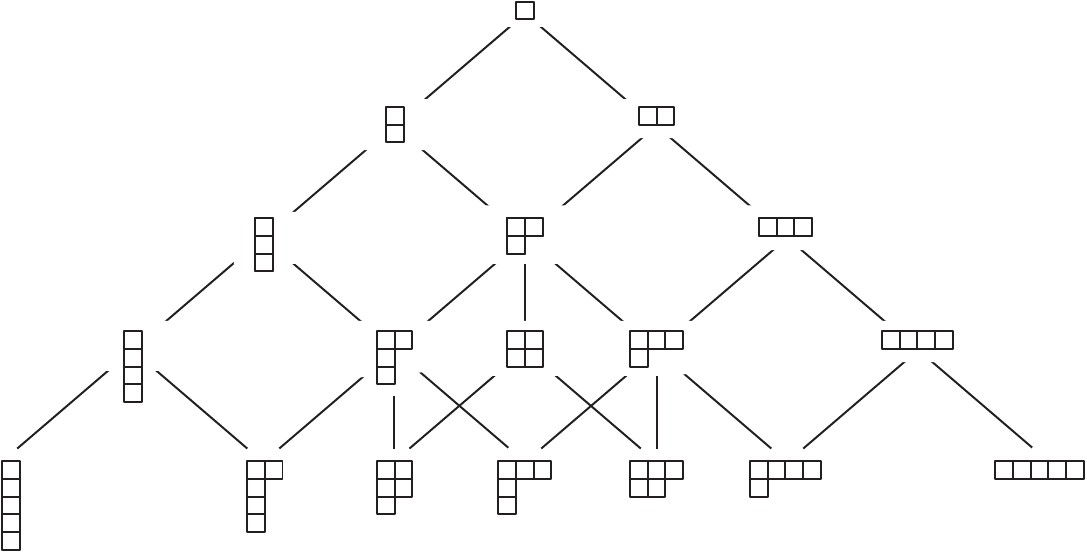}
\caption{The Young graph}
\label{fig5}
\end{figure}

A~graph is said to be of dyadic type if the number of paths from the initial vertex to every vertex of the $n$th level is equal to~$2^n$ for all $n=1,2,\dots$\,. This class includes the graphs of ordered and unordered pairs, the minimal dyadic graph, and the minimal polyadic (Glimm) graph (see Fig.~\ref{fig6}).

We emphasize once more that the path space in any graph can be regarded as the space of trajectories of a~topological (in general, nonstationary) Markov chain. This space (of paths or trajectories), regarded as a~Borel space, supports the tail filtration. The graph (chain) can be endowed with an equipment. And if we have a~Borel measure (not necessarily Markov) on the space of paths (trajectories), then we obtain a~\textit{locally finite} filtration on the resulting measure space. The local finiteness of this filtration follows from the local finiteness of the graph.

\begin{figure}[h!]
\centering
 \includegraphics[scale=0.9]{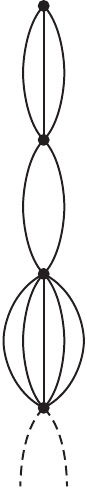}
\caption{The Glimm graph (beads)}
\label{fig6}
\end{figure}

\section{The Markov realization of locally finite filtrations}
\label{sec4}

We have approached one of the main theorems of the paper, which connects the theory of filtrations with the combinatorics of Markov chains and graded graphs.

\subsection{The theorem on a~Markov realization}
\label{ssec4.1}

\begin{theorem}
\label{th5}
Every locally finite filtration is isomorphic to the tail filtration of a~Markov chain, or to the tail filtration of the path space of a~graded graph.\footnote{Here we more often associate the theory of filtrations with Markov chains rather than with graded graphs, because the term ``Markov property'' and the language of Markov chains is better known and more widely used  than the equivoluminar language of graded graphs. One of the purposes of this survey is to popularize the equivalence of these languages and to report the new things brought into the field by graded graphs.}
\end{theorem}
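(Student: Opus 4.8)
The plan is to realize the filtration by the \emph{basis method} of Subsection~2.5.2: I will produce a sequence of finite (or at worst countable) partitions $\{v_n\}$ of $(X,\mu)$ that serve as the vertices of a Bratteli diagram, so that the coordinate map $x \mapsto (v_0(x), v_1(x), \dots)$ identifies $(X,\mu)$ with the path space of a graded graph and carries $\xi_n$ to the tail partition at level $n$. The combinatorial requirements are exactly these: each $v_n$ is a countable-valued $\mathfrak A_{\xi_n}$-measurable function (so that it is constant on the finite tail blocks and thus labels a level-$n$ vertex), and $\bigvee_{m\ge n} \sigma(v_m) = \mathfrak A_{\xi_n}$ for every $n$ (so that the tail $(v_n, v_{n+1}, \dots)$ recovers the $\xi_n$-block, while the initial segment $(v_0,\dots,v_{n-1})$ distinguishes the finitely many points inside it). To build such $v_n$, I use that each quotient $(X/\xi_n, \mu_{\xi_n})$ is a Lebesgue space and hence admits a countable generating sequence of finite partitions; pulling these back to $X$ (where a partition coming from level $m$ is automatically $\mathfrak A_{\xi_n}$-measurable for $n\ge m$, since $\xi_n \prec \xi_m$) and distributing them diagonally over the levels produces finite $v_n$ with the two required properties. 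Local finiteness (Definition~2) guarantees that each tail block is finite, so the number of paths into any vertex is finite and the resulting graph is genuinely locally finite.

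This already establishes the graded-graph (equivalently, Markov-compactum) half of the statement: the invariants of Subsection~2.4 of the finite fragments are precisely the measured trees attached to the vertices (Proposition~2), and the equipment $\Lambda$ is read off from the canonical conditional measures of Theorem~2, with the cotransition $\lambda_{v}^{u}$ equal to the conditional mass that the $\xi_n$-block $v$ assigns to its $\xi_{n-1}$-subblock $u$. By Lemma~1 these ratios form a genuine cocycle, and the transitivity of the conditional measures guarantees that the cotransition into a vertex is intrinsic to that vertex together with the next level, not to the whole future.

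The substantive point, and the step I expect to be the main obstacle, is to arrange that the push-forward measure is genuinely \emph{Markov} rather than merely an arbitrary associated measure agreeing with the equipment (cf.\ Subsection~2.6). The difficulty is that, for generic separating coordinates $v_n$, the conditional law of $v_{n-1}$ given the full future $(v_n,v_{n+1},\dots)$ depends on the entire future block, not only on the adjacent label $v_n$. I would resolve this by enriching each vertex so that $v_n$ records, in addition to a separating label, the isomorphism type of the one-step conditional structure of its block (the subblock masses, colored by the child-vertices they carry); by local finiteness there are only finitely many such types at each level, so $v_n$ remains countable. With this enrichment the cotransition $\operatorname{Prob}(v_{n-1}=u \mid v_n = v)$ becomes a function of the pair $(u,v)$ alone, which is exactly the Markov property for the tail filtration. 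The delicate part is to fold the separating labels into this vertex refinement \emph{without} destroying the factorization of the cotransitions through adjacent vertices; this requires a measurable ($\operatorname{mod}0$) selection of the orderings of subblocks by conditional mass (in the spirit of Theorem~3) carried out uniformly over the continuum of blocks, together with care in the non-ergodic case, where the monotone intersection $\overline{\xi}_{\tau}$ of Proposition~1 may have nontrivial countable classes that the coordinates must still separate. Once the coordinates are chosen so that both faithful separation and the pairwise factorization of cotransitions hold, the push-forward of $\mu$ is the Markov measure with the prescribed equipment, and the coordinate map is the desired isomorphism of filtrations.
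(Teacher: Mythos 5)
Your route is essentially the paper's: states built level by level, a separating (complementary) label to distinguish the finitely many points inside each tail block, a generating basis to make the coordinate map injective, and---the step you correctly single out as the crux---refinement of each vertex by the isomorphism type of the measured, labelled conditional structure of its block, so that the cotransitions factor through adjacent levels. Your recursive ``one-step structure coloured by child-vertices'' unwinds to exactly the paper's rule $(*)$: two blocks are sent to the same vertex if and only if they are isomorphic as finite measured trees labelled by the previously constructed partitions.

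However, one foundational step is wrong as written. You claim that a partition pulled back from $X/\xi_m$ is automatically $\mathfrak A_{\xi_n}$-measurable for $n\ge m$. The inclusion goes the other way: the filtration is decreasing, so $\mathfrak A_{\xi_n}\subseteq\mathfrak A_{\xi_m}$ for $n\ge m$, and a $\xi_m$-measurable partition is $\xi_n$-measurable only for $n\le m$. Hence the generators of $\mathfrak A_{\xi_n}$ are eligible to sit only at levels $\le n$ of the diagram, not $\ge n$, and they cannot be ``distributed diagonally'' so as to realize $\bigvee_{m\ge n}\sigma(v_m)=\mathfrak A_{\xi_n}$ by direct placement: $\mathfrak A_{\xi_n}$ is non-atomic, so no finite $v_n$ placed at level $n$ alone can carry all of its generators. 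This identity is in fact the delicate half of the theorem and must be obtained indirectly, as the paper does, from the interplay of the complementary partitions $\zeta_n$ with the basis $\{\eta_n\}$; note that tail-measurability of the coordinates, separation inside each block by the initial coordinates, and generation of the full $\sigma$-algebra by all coordinates together do \emph{not} by themselves imply it. For the dyadic Bernoulli filtration on $\{0,1\}^{\mathbb N}$, the choice $v_0=(x_1,x_2)$ and $v_n=x_{n+1}\oplus x_{n+2}$ for $n\ge1$ satisfies all three properties, yet $\bigvee_{m\ge n}\sigma(v_m)$ is a proper (index-two) subalgebra of $\mathfrak A_{\xi_n}$ for every $n\ge1$, so the tail filtration of the resulting chain is not the original one. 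The separating labels must therefore be chosen with more care than an arbitrary measurable selection; this is the one place where your plan, as described, would fail, while the Markov enrichment and the remaining ingredients match the paper's argument.
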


\begin{proof}
Let
$$
\tau=\{\mathfrak A\}_{n=0}^{\infty}\simeq \{\xi_n\}_{n=0}^{\infty}
$$
be a~locally finite filtration in a~Lebesgue space
$(X,{\mathfrak A}_0,\mu)$ with continuous measure~$\mu$. Starting from~$\tau$, we will construct a~Markov chain with finite state spaces whose tail filtration is isomorphic to~$\tau$.

The finite partitions introduced below are labeled, i.\,e., their elements are assigned labels (for example, positive integers). The (ordered) product of two or more labeled partitions is again a~partition of the same type, whose elements are labeled by the ordered tuples of labels of the factors. In what follows, it is convenient to assume that for finite partitions $\eta \succ \eta'$, the set of labels of the larger partition~$\eta$
includes the set of labels of the smaller partition~$\eta'$.

Denote by $\phi=\{\xi_n\}_n$ the sequence of measurable partitions corresponding to a~filtration of $\sigma$-algebras~$\mathfrak F$. Choose a~basis of the $\sigma$-algebra~${\mathfrak A}_0$, i.\,e., an arbitrary increasing sequence of finite labeled partitions~$\{\eta_n\}_{n=1}^{\infty}$ that tends to the partition into singletons of the space~$(X,\mu)$.

We start an inductive construction with the base case, namely, with defining the first state space of the future Markov chain. For this, we first choose an arbitrary partition \textit{complementary to~$\xi_1$}, i.\,e., a~labeled measurable partition~$\zeta_1$ such that almost every element of~$\zeta_1$ intersects almost every element of~$\xi_1$ in at most one point (this can be done, since the elements of~$\xi_1$ are finite, and hence~$\zeta_1$ can be chosen finite).\footnote{The choice of a~partition~$\zeta_1$ complementary to a~given partition whose elements are finite or countable is a~standard procedure developed in~\cite{45}: it proceeds by successively constructing  sets of
maximal measure for which the intersection with almost all elements
of the given partition consists of at most one point. This is the main step in the classification of measurable partitions.}

Consider the product of partitions $\sigma_1=\eta_1\vee\zeta_1$; it is a~finite partition. We declare the set of its elements, i.\,e., the quotient space $Y_1=X/\sigma_1$ regarded as a~measure space, to be the set of labeled states of the Markov chain at time $t=1$.

The induction step is a~construction of the set~$Y_n$ of states at time $t=n$ provided that we have already constructed a~finite Markov chain of length~$n-1$ as the quotient  of the space $(X,\mu)$ by the finite partition
$\bigvee\limits_{i=1}^{n-1} \sigma_i$. We construct a~partition~$\sigma_n$, consider the quotient space $X/{\xi_{n-1}}$, and define on this space a~finite labeled partition~$\zeta_n$ complementary (in the sense described above) to the partition $\xi_n/\xi_{n-1}$ in the space $X/\xi_{n-1}$.

Note that if the intersection of an element of the partition~$\zeta_n$
with an element~$C$ of the partition $\xi_n/\xi_{n-1}$ is nonempty, then this intersection consists of a~single point, and hence every point from~$C$ obtains a~uniquely defined label, an element $D \in \zeta_n$.
The same is true if (as below) we replace the partition~$\zeta_n$
with a~finite subpartition.

Denote by~$\pi_n$ the canonical projection to the quotient space:
$$
\pi_n\colon X\to X/\xi_n.
$$

Define a~finite partition~$\sigma_n$ of the space~$X/\xi_{n-1}$
as a~subpartition of~$\zeta_n$ ($\sigma_n\succ \zeta_n$) by the following rule:

$(*)$ by definition, two points~$y_1$ and~$y_2$ of an element~$D$ of the partition~$\zeta_n$ lie in the same element of~$\sigma_n$ if and only if the elements $C_1=\pi_n^{-1}(y_1)$ and
$C_2=\pi_n^{-1}(y_2)$ endowed with the conditional measures of~$\xi_{n-1}$ are isomorphic as finite measured trees with points labeled by the elements of the partitions~$\sigma_i$, $i=1,2,\dots,n-1$; the partition~$\eta_n$ is obtained by restricting each of the finite partitions~$\sigma_i$,~$\eta_i$, $i=1,\dots,n-1$, to the elements~$C_1$ and~$C_2$.

Note that, as explained above, an isomorphism between two elements~$C_1$
and~$C_2$, if it exists, is unique.

Now we can define the $n$th state space~$Y_n$ as the quotient $\{X/\xi_1\}/\sigma_n$, and the path space of the $n$-step Markov chain, as the quotient  $\{X/\xi_1\}/\!\bigvee\limits_{i=1}^n \sigma_i$; denote these spaces by~$({\mathscr M}_n,\nu_n)$.

The Markov property immediately follows from the construction: an element of the partition contains points with equal conditional measures.

As also follows from the construction, these spaces constitute, in a~natural sense, an inverse spectrum of spaces, i.\,e.,
$$
({\mathscr M}_n,\nu_n)\leftarrow ({\mathscr M}_{n+1},\nu_{n+1}),
$$
and the projection preserves the Markov structure (a~new space is being added), as well as the measure. Thus we can consider the inverse limit of these spaces, which is already the space of infinite trajectories of a~one-sided Markov chain; denote this space, endowed with the limiting measure, by $({\mathscr M}_{\infty},\nu_{\infty})$.

Thus we have constructed a~homomorphism of the Lebesgue space  with filtration
$(X,\mu,\mathfrak F)$ to the space
$({\mathscr M}_{\infty},\nu_{\infty},\mathfrak T)$, where $\mathfrak T$ is the tail filtration of the Markov chain.

By construction, $\sigma_n \succ \eta_n$ for all~$n$, therefore,
$$
\bigvee_n \sigma_n \succ \bigvee_n\eta_n=\epsilon,
$$
i.\,e., the limiting homomorphism $X \to \mathscr M$ is an isomorphism of the measure spaces $(X,\mu)$ and $({\mathscr M},\nu)$, where $\nu$ is the image of the measure (the quotient measure). Denote the $n$th coordinate partition of the space~$\mathscr M$, i.\,e., the partition into the classes of trajectories that coincide from the
$(n+1)$th position, by~$\theta_n$. The tail filtration~$\mathfrak T$ gives rise to the sequence of partitions~$\{\theta_n\}_n$.

Let us show that this isomorphism sends the filtration~$\mathfrak F$
to~$\mathfrak T$. For this, it suffices to observe that our construction (by the same reason as above) sends the space~$X/\xi_1$ isomorphically to the quotient of $({\mathscr M},\nu)$ by the partition into the classes with respect to the first coordinate, and, more generally, the space $X/\xi_n$, to the analogous quotient with respect to the $n$th coordinate. Hence the bases of the corresponding partitions are sent to each other isomorphically, and, together with our choice of the partitions~$\zeta_n$
(complementary to~$\xi_n$), which are sent to the partitions into the classes of sequences that
 coincide from the~$n$th coordinate,
this yields a~desired isomorphism of filtrations. Theorem~\ref{th5} is proved.
\end{proof}

\subsection{Remarks on Markov models}
\label{ssec4.2}

1) Recall that in measure theory there are several theorems on Markov realizations of objects similar to the theorem proved above. For example: every transformation with invariant or quasi-invariant measure has an adic realization on the path space of a~graded graph with Markov measure.\footnote{An adic transformation~\cite{67},~\cite{68} (a~generalized odometer) is defined for graphs in which for each vertex there is a~linear order on the edges going to this vertex; using these orders, one can  define in a~natural way a~lexicographic order on the sets of cofinal paths; then the adic automorphism sends a~path to the next path with respect to this order if it is defined, which is the case on a~set of full measure.} The theory of adic transformations and adic realizations of automorhisms became a~new source of examples in ergodic theory. Even the first example suggested by the author, that of the Pascal automorphism, still intrigues researchers; it is not yet known whether its spectrum is continuous, and many other properties are also unknown
(see~\cite{75},~\cite{80},~\cite{41},~\cite{24}).

2) Another example is the theory of tame (hyperfinite) partitions (see~\cite{63}). Every discrete tame partition (in the category of spaces with quasi-invariant measure) is isomorphic to the tail partition on the space of trajectories of a~stationary Markov chain with finite state spaces and a~measure quasi-invariant with respect to the shift.

3) Let us give a~corollary of Theorem~\ref{th5}.

\begin{corollary}
\label{cor2} For an arbitrary locally finite filtration, the cocycle corresponding to the tail equivalence relation (see the definitions in Sec.~\ref{ssec3.5}) is isomorphic to a~cocycle of a~Markov chain with finite state spaces.

In particular, every hyperfinite (tame) partition with countable elements in a~Le\-bes\-gue space is isomorphic to the tail partition of a~Markov chain. In other words, every hyperfinite equivalence relation is isomorphic to the tail equivalence of a~Mar\-kov chain; this is also a~corollary of Theorem~\ref{th5} on the Markov property of a~locally finite filtration; this fact is known for a~long time (see~\cite{60}).
\end{corollary}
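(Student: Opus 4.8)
The plan is to deduce both assertions from Theorem~\ref{th5} by transporting the attached structures along the isomorphism it supplies, using that a measure-preserving isomorphism carrying one filtration onto another automatically carries the associated tail equivalence relations and the systems of conditional measures onto one another.

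For the statement about cocycles, I would start from an arbitrary locally finite filtration $\tau=\{\xi_n\}$, which by Lemma~\ref{lem1} carries a well-defined cocycle $c_\tau$ on its tail equivalence relation, given for points $t,s$ of a common block $C\in\xi_n$ by $c_\tau(t,s)=\mu^C(t)/\mu^C(s)$ as in~\eqref{eq3.1}. Theorem~\ref{th5} provides an isomorphism $T\colon(X,\mu)\to(\mathscr M,\nu)$ of measure spaces with $T\xi_n=\theta_n$ for all $n$, where $\{\theta_n\}$ is the tail filtration of a Markov chain with finite state spaces. Since $T$ is measure-preserving and maps blocks of $\xi_n$ onto blocks of $\theta_n$, the uniqueness clause in Theorem~\ref{th2} forces it to carry $\{\mu^C\}$ onto $\{\nu^{TC}\}$; it also maps the tail equivalence relation of $\tau$ onto that of $\{\theta_n\}$. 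The defining ratios therefore satisfy $c_\tau(t,s)=c(Tt,Ts)$, where $c$ is the conditional-measure (hence Markov, cf. Sec.~\ref{ssec3.5}) cocycle of the chain, so the two cocycles are conjugate by $T$, which is precisely the asserted isomorphism of cocycles.

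For the tame partition, I would first secure the right representation: a hyperfinite partition $\eta$ with countable blocks is, by the definition of tameness, the monotone limit $\bigcap_n\xi_n=\overline{\xi}_{\tau}$ of a decreasing sequence of measurable partitions with finite blocks (Sec.~\ref{ssec2.6}), and this sequence may be taken to be a locally finite filtration (relaxing, if necessary, the bound on the number of types of conditional measures, as permitted by the remark following Definition~\ref{def2}). Theorem~\ref{th5} then gives an isomorphism $T$ onto the tail filtration $\{\theta_n\}$ of a finite-state Markov chain with $T\xi_n=\theta_n$, and since monotone intersections are $\operatorname{mod} 0$ invariants respected by isomorphisms of filtrations, $T$ carries $\bigcap_n\xi_n$ onto $\bigcap_n\theta_n$, which is precisely the tail partition (tail equivalence) of the chain. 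Hence $\eta$ is isomorphic to the tail partition of a Markov chain.

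The routine ingredients are the transport of the conditional measures and of the cocycle along $T$; the one genuinely delicate point is the reduction used in the second part, namely that a tame partition with countable blocks really does arise as the monotone intersection of a \emph{locally finite} filtration, and that this intersection is stable under the isomorphism. The latter is exactly the content of the stability assertion of Proposition~\ref{pr1} (step~1), where finiteness of the blocks of the $\xi_n$ is essential so that null modifications alter $\overline{\xi}_{\tau}$ only on a null set; once the representation is in place, both conclusions are immediate consequences of Theorem~\ref{th5}.
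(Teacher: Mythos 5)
Your proposal is correct and follows the same route as the paper, which states the corollary without proof as a direct consequence of Theorem~\ref{th5}: you simply transport the conditional measures (hence the cocycle, via the uniqueness clause of Theorem~\ref{th2}) and the monotone intersection $\bigcap_n\xi_n$ (well defined $\operatorname{mod}0$ by Proposition~\ref{pr1}) along the isomorphism supplied there. You also correctly isolate the one point the paper leaves tacit, namely that a tame partition with countable blocks must first be represented as the intersection of a filtration to which Theorem~\ref{th5} (or its routine extension beyond finitely many types of conditional measures) applies.
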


4) Of course, the model constructed in Theorem~\ref{th5} is by no means unique: there are many graded graphs whose tail filtrations are isomorphic. In spite of this, the language of graded graphs and their tail filtrations on spaces of paths, or on spaces of trajectories of Markov chains, is extremely convenient. Note, however, that when passing from the original filtration (even a~Markov one) to a~new realization, we can lose some additional (noninvariant) properties, e.\,g., the stationarity of the approximation: the original Markov chain could be defined as a~stationary one, but the approximation constructed in the proof of Theorem~\ref{th5} is rarely stationary. As we will see from examples in what follows, this loss is compensated by the simplicity of the analysis of the constructed approximation. The proof of Theorem~\ref{th5} contains a~method of constructing a~graph in whose path space the given filtration can be realized. We obtain a~class of rather difficult problems related to constructing graded equipped graphs with tail filtrations on path spaces isomorphic to a~given filtration. Let us consider an important example.

5) Consider a~\textit{dyadic filtration}, for which all partitions
$$
\xi_1,\xi_2/\xi_1,\dots,\xi_n/\xi_{n-1},\dots
$$
have two-point elements with the measures $(1/2,1/2)$. Our construction proves that every such filtration is isomorphic to the tail filtration of a~dyadic graded graph endowed with a~central measure on its path space; in this graph, all vertices of all levels except the initial vertex have two predecessors with equal cotransitions.

Thus the problem of metric classification of dyadic filtrations is reduced to the essentially combinatorial problem of classification of dyadic graded graphs (more exactly, path spaces of such graphs) endowed with central measures. Rephrasing  results obtained in the 1970s--1990s, as well as recent facts and theorems, in these, essentially combinatorial, terms allows one to take a~fresh look on the problem. The standardness criterion for dyadic filtrations can be rephrased as a~criterion saying when a~dyadic graph endowed with a~central measure can be brought to the simplest Glimm graph,
see Fig.~\ref{fig6}. Specific Markov models of filtrations will be considered in more detail in Sec.~\ref{sec6} devoted to nonstandard filtrations.

\section{Standardness, criteria, finite isomorphism}
\label{sec5}

\subsection{The minimal model of a~filtration}
\label{ssec5.1}
In this section, with every infinite locally finite filtration
$\tau=\{\xi_k\}_{k=1}^\infty$ of an arbitrary Lebesgue space~$(X,\mu)$
with continuous measure~$\mu$ we associate its canonical realization as the tail filtration of an equipped graded multigraph, or, equivalently, of a~Markov chain.
\textit{In contrast to the result of Theorem~\ref{th5}, the obtained filtration will not, in general, be isomorphic to the original filtration, but only finitely isomorphic.}

As we will see, this construction leads to a~special class of equipped graded multigraphs, which we call minimal. The model itself will be called the
\textit{canonical minimal model} of a~filtration.\footnote{Another natural term for minimal filtrations is ``finitely determined filtrations,'' since they are completely determined by their finite invariants (the same term ``finitely determined'' should have been used for standard filtrations, see the next section~\ref{ssec5.2}). \label{fnt7}}

Note that a~finite isomorphism does not preserve even the ergodicity of a~filtration (this is obvious from examples of dyadic filtrations), so we add the ergodicity requirement  each time when we compare  a~finite isomorphism and a~true one.

\textit{Given an ergodic filtration $\tau=\{\xi_k\}_{k=1}^\infty$ on a~Lebesgue space~$(X,\mu)$, we will construct an equipped graded multigraph~$\Gamma(\tau)$ and a~measure~$\nu$ on the path space~$T(\Gamma(\tau))$ that agrees with the equipment.}

The construction, as in Theorem~\ref{th5}, is inductive: we successively build a~multi\-graph (by levels), an equipment of this multigraph, and measures on its levels that will determine a~measure on the whole path space.

We start from the vertex~$\varnothing$ of the zero level and associate it to the whole space, more exactly, to the quotient of~$(X,\mu)$ by the trivial partition. Consider the first partition~$\xi_1$ and the finite (by the local finiteness of the filtration) partition~$\delta_1$ of the quotient space $X/\xi_1$ into the sets $A_1,A_2,\dots,A_{k_1}$ satisfying the following property: any two points  $C,D \in X/\xi_1$, regarded as elements of~$\xi_1$, have isomorphic conditional measures if and only if they lie in the  same element of~$\delta_1$. With the elements of the partition~$\delta_1$ of the space $X/{\xi_1}$, i.\,e.,
with the sets~$A_i$, $i=1,2,\dots,k_1$, associate the vertices of the first level of the graph~$\Gamma$. In other words, the vertices of the first level~$\Gamma_1$ correspond to the types of conditional measures of the partition~$\xi_1$. The number of edges going from every vertex to the vertex~$\varnothing$ is equal to the number of points in the corresponding element of the partition~$\xi_1$, and the cotransition probabilities are exactly the conditional measures of points in this element. Finally, the measures of the vertices themselves are equal to the measures~$\mu_{\xi_1}$ of the sets~$A_i$,
$i=1,\ldots,k_1$.

The process continues in exactly the same way. It is easiest to say that the construction of the vertices, edges, and cotransition probabilities of the level~$\Gamma_{m+1}$, provided that we already have the previous level~$\Gamma_m$, proceeds as above with the original filtration replaced by the filtration
$\{\xi_{m+s}/\xi_m\}_{s=1}^{\infty}$.

The sequence of partitions $\delta_1,\delta_2,\dots$ of the spaces $X/\xi_1, X/\xi_2,\dots$, respectively, corresponds to the partition of the vertices of the graph~$\Gamma(\tau)$ into levels, and the elements of each partition~$\delta_n$
correspond to the vertices of the $n$th level.

Thus, from the invariants of the filtration~$\tau$, we have constructed a~multigraph~$\Gamma(\tau)$ with an equipment and a~Markov measure on cylinder sets that agrees with this equipment and thus defines a~Markov measure~$\mu_{\Gamma(\tau)}$ on the space~$T(\Gamma_{\tau})$ of infinite paths in~$\Gamma(\tau)$. The obtained equipped locally finite graded multigraph~$\Gamma(\tau)$ (we will call this multigraph \textit{minimal},
and the corresponding tail filtration~$\overline\tau$, the
\textit{minimal model of the original filtration~$\tau$}) has the following characteristic property, which obviously follows from the construction.

\begin{proposition}[{\rm(theorem-definition of a~minimal graph)}]
\label{pr3}
An equipped multigraph is a~minimal model of a~locally finite filtration on a~Lebesgue space with continuous measure if and only if for any two distinct vertices~$u$,~$v$ of an arbitrary level, the trees, with roots in
$u$ and $v$, of paths leading from $\varnothing$ to these vertices are not isomorphic as measured trees.
\end{proposition}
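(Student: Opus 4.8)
The plan is to prove both implications through a single correspondence: the measured tree rooted at a level-$n$ vertex $v$ is exactly the invariant of the conditional measure of the partition $\xi_n$ on any element passing through $v$. First I would make this identification precise. On the path space $T(\Gamma)$ carrying the measure that agrees with the equipment, the element of $\xi_n$ containing a path through $v$ consists of all paths sharing the same tail beyond level $n$ but with arbitrary initial segment $\varnothing\to v$; its conditional measure is supported on the finite set of such segments, with weights given by the products of the cotransition probabilities $\lambda_v^u$ along the edges. By Definition~\ref{def4} and Proposition~\ref{pr2} this is precisely the measured tree rooted at $v$, so the isomorphism type of the conditional measure of $\xi_n$ on that element coincides with the isomorphism type of the tree of $v$.

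For the necessity direction I would appeal directly to the construction preceding the proposition. There the vertices of the $n$th level of $\Gamma(\tau)$ are, by definition, the elements of the partition $\delta_n$ of $X/\xi_n$, i.e.\ the distinct isomorphism types of conditional measures of $\xi_n$. Combined with the identification above, distinct level-$n$ vertices carry non-isomorphic measured trees by construction; this is the ``obvious'' half.

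For sufficiency, suppose $(\Gamma,\Lambda)$ together with its path-space measure has the stated tree-distinctness property. I would form the tail filtration $\tau$ on $T(\Gamma)$ and run the minimal-model construction on it, producing $\Gamma(\tau)$. By the identification, the types of conditional measures of $\xi_n$ are in bijection with the isomorphism types of trees rooted at the level-$n$ vertices of $\Gamma$; the distinctness hypothesis forces distinct vertices to yield distinct types, so the assignment sending a level-$n$ vertex of $\Gamma$ to its tree type and then to the corresponding vertex of $\Gamma(\tau)$ is a bijection. I would then check that this vertex bijection carries edges to edges with matching multiplicities and sends each $\lambda_v^u$ to the corresponding cotransition in $\Gamma(\tau)$, using that the tree of $v$ is reconstructed from the trees of its predecessors together with the edge-weights entering $v$. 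This gives an isomorphism $(\Gamma,\Lambda)\simeq\Gamma(\tau)$ of equipped multigraphs, so $\Gamma$ is a minimal model.

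The main obstacle, and the step requiring care, is the sufficiency-side bookkeeping: one must confirm that isomorphism of measured trees is exactly the equivalence detecting ``same type of conditional measure'' (this is where Theorem~\ref{th4}, on complete invariants of finite filtrations, is invoked), and that the level-by-level vertex bijection is coherent, i.e.\ compatible with the inverse-spectrum maps $(\mathscr M_n,\nu_n)\leftarrow(\mathscr M_{n+1},\nu_{n+1})$, so that it globalizes to a graph isomorphism rather than a mere family of level-wise bijections. A minor point to dispatch is that every vertex of $\Gamma$ must be charged by the measure, since a vertex of measure zero would be invisible to the construction and would break the bijection; this holds because in a model of a filtration every vertex is reached by a set of paths of positive measure.
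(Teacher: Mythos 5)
Your proof is correct and follows the same route the paper intends: the paper offers no separate argument for Proposition~\ref{pr3} beyond the remark that it ``obviously follows from the construction,'' and your write-up is precisely the natural unwinding of that construction — identifying the measured tree at a vertex with the type of the conditional measure of $\xi_n$, reading off necessity from the definition of $\delta_n$, and for sufficiency matching $\Gamma$ level by level with $\Gamma(\tau)$ built from its own tail filtration. Your extra care about vertices of measure zero and about the coherence of the level-wise bijections with the edge multiplicities and cotransitions is exactly the mod-$0$ bookkeeping the paper leaves implicit.
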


The filtration~$\tau$ is finitely isomorphic to~$\overline\tau$
(this is obvious from the construction), but, in general, not isomorphic to it.

Clearly, the above construction can be applied to the tail filtration of an arbitrary equipped multigraph to obtain a~new multigraph with an equipment and a~measure. If the equipment of the original graph is canonical
 (i.\,e., the filtration is semihomogeneous), then its standard model will also be a~semihomogeneous filtration, since the image of a~canonical equipment is, obviously, also canonical. Thus our construction defines, in particular, an operation on graded graphs, by associating with an arbitrary multigraph a~\textit{minimal multigraph}. Obviously, the minimal model of a~minimal multigraph is the multigraph itself. In other words, this operation is a~projection in the space of all graded multigraphs to the space of minimal multigraphs.

We emphasize that in the basis method of defining a~filtration, the basis of the algebra of all measurable sets in the path space of the constructed multigraph~$\Gamma(\tau)$ is the algebra of cylinder sets, i.\,e., sets of paths determined by conditions on finite segments of paths. But when we map the tail filtration of one multigraph to the tail filtration of another one, the inverse image of the $\sigma$-algebra of all measurable sets of the second multigraph does not in general coincide with the $\sigma$-algebra of all measurable sets of the first one in the space of the original filtration. In other words, we obtain homomorphisms, and not isomorphisms, of the spaces $X/\xi_n$, in contrast to Theorem~\ref{th5} from the previous section, where the construction was aimed at obtaining an isomorphism. Namely, in the proof of Theorem~\ref{th5}, we had to subdivide the spaces $X/\xi_n$ and introduce additional vertices in graphs corresponding to the same conditional measures on the path space of the tree of the original graph. In the new construction, we identify these vertices and glue them together.

\begin{example}
\label{ex1}
The standard model of every ergodic dyadic filtration is the tail filtration of the Glimm multigraph of beads (see Fig.~\ref{fig6});
this graph is minimal, and its path space is the dyadic compactum~$\{0,1\}^{\infty}$ with the ordinary tail equivalence relation. Note that in this example there is a~unique central measure, the Bernoulli measure with probabilities
$(1/2,1/2)$. For brevity, in what follows, filtrations finitely isomorphic to Bernoulli ones will be called finitely Bernoulli.
\end{example}

\begin{figure}[b!]
\centering
 \includegraphics{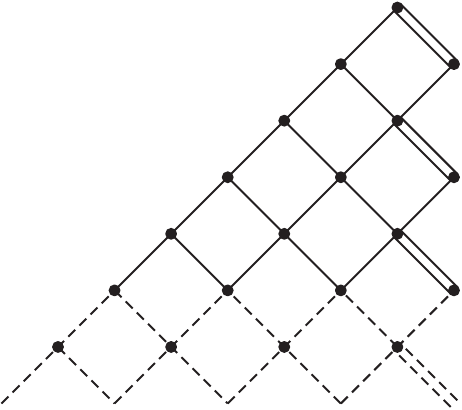}
\caption{A~minimal graph: the half of the Pascal graph}
\label{fig7}
\end{figure}

\begin{example}
\label{ex2} The Pascal graph is not minimal, since symmetric vertices in the graph have isomorphic trees (see Fig.~\ref{fig7}, showing the quotient graph). Moreover, if we introduce the ergodic central measure on the path space (this is the Lebesgue measure on paths regarded as sequences of zeros and ones), then the set of paths passing through one of the two first vertices does not satisfy the conditions of Theorem~\ref{th5}. Therefore, the filtration on the path space endowed with this measure is not standard. A~simple example of an inhomogeneous nonstandard Markov chain is given at the end of Sec.~\ref{ssec5.5}. An example of two measured trees that are isomorphic as trees but nonisomorphic as measured trees is shown in Fig.~8.
\end{example}

\begin{figure}[t!]
\centering
 \includegraphics{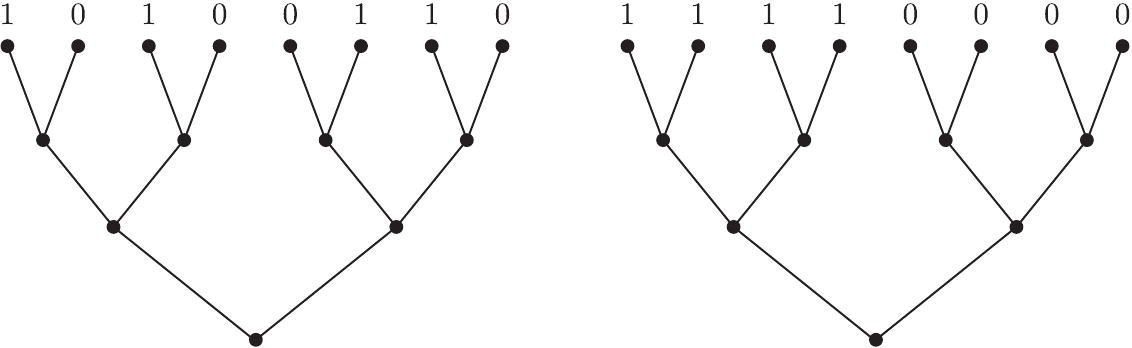}
\caption{Nonisomorphism of words with respect to the group of symmetries of a~tree}
\label{fig8}
\end{figure}

Let us summarize the main conclusions.

\begin{proposition}
\label{pr4}
1) Every locally finite filtration~$\tau$ is finitely isomorphic to its minimal model~$\overline\tau$, i.\,e., the tail filtration on a~minimal multigraph~$\Gamma(\tau)$, but in general is not isomorphic to it.

2) The minimal model, regarded as an equipped graph with a~measure on the path space, is a~complete finite isomorphism invariant of filtrations, i.\,e., two filtrations are finitely isomorphic if and only if their minimal models coincide.

3) The minimal model of an $\{r_n\}_n$-adic and, more generally, finitely Ber\-noul\-li filtration is a~Bernoulli filtration. Thus an example of a~minimal ergodic filtration is provided by any Bernoulli filtration.
\end{proposition}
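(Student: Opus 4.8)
The plan is to deduce all three parts from one structural fact: the minimal-model construction of Section~\ref{ssec5.1} is a faithful bookkeeping device for the sequence of finite invariants supplied by Theorem~\ref{th4}. Write $\omega_N$ for the complete invariant of the length-$N$ fragment of a filtration, i.e.\ the pushforward of $\mu_{\xi_N}$ under the map $t_N$ to $\operatorname{Tree}_{N+1}$, so that by definition two filtrations are finitely isomorphic precisely when their sequences $\{\omega_N\}_N$ agree, this being an equivalence relation.

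For part~1) I would first observe that the minimal-model construction glues together exactly those vertices of a given level whose associated measured trees are isomorphic. This operation only sums the weights of the glued vertices; it does not alter the measure they induce on the space of tree-isomorphism types. Hence the pushforward measure onto $\operatorname{Tree}_{N+1}$, and with it the invariant $\omega_N$, is unchanged in passing from $\tau$ to $\overline\tau$, for every $N$. By Theorem~\ref{th4} the fragments therefore determine the same point of the module space, which is exactly finite isomorphism of $\tau$ and $\overline\tau$. The assertion that they need not be truly isomorphic is witnessed by any finitely Bernoulli filtration that is not Bernoulli; such filtrations exist by~\cite{59} (see also Example~\ref{ex2}), and by part~3) below each has a \emph{Bernoulli} minimal model, to which it cannot be isomorphic since it is not itself Bernoulli.

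For part~2) the ``if'' direction is immediate from part~1) and transitivity of finite isomorphism: each filtration is finitely isomorphic to its minimal model, so coincidence of the models makes both filtrations finitely isomorphic to a common object, hence to each other. The ``only if'' direction is the crux. Here I would argue that the equipped measured graph $\overline\tau$ is canonically reconstructible from the sequence $\{\omega_N\}_N$. By the characterization in Proposition~\ref{pr3}, distinct vertices of a given level of a minimal graph carry \emph{non-isomorphic} measured trees; consequently the vertices of level $N$ are in canonical bijection with the atoms (tree-types) of $\omega_N$, their weights are the corresponding $\omega_N$-masses, and the edges together with the cotransition probabilities are read off from the nesting of these trees and their conditional measures. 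Thus the whole equipped graph-with-measure is a function of, and faithfully records, the sequence $\{\omega_N\}$; since finite isomorphism means exactly that the two sequences agree, the two minimal models coincide.

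For part~3): in an $\{r_n\}_n$-adic filtration every block of $\xi_n$ is homogeneous of one and the same cardinality, so at each level there is a single tree-type, and the minimal graph has one vertex per level with $r_n$ equally weighted predecessors; for $r_n\equiv 2$ this is the Glimm graph of Example~\ref{ex1}, whose path space is the Bernoulli scheme. For a general finitely Bernoulli filtration the invariants $\omega_N$ coincide with those of a Bernoulli filtration, so by part~2) its minimal model coincides with that of the Bernoulli filtration; the latter, being homogeneous, already satisfies the criterion of Proposition~\ref{pr3} and hence equals the Bernoulli tail filtration itself. This simultaneously shows that a Bernoulli filtration is its own minimal model and, being ergodic by Kolmogorov's zero--one law, supplies the promised example of a minimal ergodic filtration. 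I expect the main obstacle to be the ``only if'' part of~2): making rigorous that collapsing by tree-isomorphism yields a genuinely well-defined equipped measured graph with no residual choices, so that equality of all $\omega_N$ truly forces equality of the graphs rather than mere isomorphism of some auxiliary data.
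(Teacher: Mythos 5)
Your proposal is correct and follows essentially the same route as the paper: both rest on the observation that the minimal-model construction uses only the conditional measures (equivalently, the measured-tree invariants of Theorem~\ref{th4}), so finitely isomorphic filtrations yield the same minimal model, and conversely the minimal model, by the non-isomorphism of trees at distinct vertices (Proposition~\ref{pr3}), faithfully records those invariants. The paper's own proof is just a terser statement of exactly this; your elaboration via the measures $\omega_N$ and the reconstruction of the equipped graph from their atoms is the natural way to fill in its details, and the obstacle you flag in part~2) is precisely what Proposition~\ref{pr3} is there to dispose of.
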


\begin{proof}
Indeed, every finite fragment of the filtration has been isomorphically and measure-preservingly mapped to a~finite fragment of the graph.
\textit{And since we have used only conditional measures of all partitions and measures of sets with a~fixed type of conditional measures, finitely isomorphic filtrations lead to the same standard models}. Conversely, if two filtrations
$\tau=\{\xi_n\}$ and $\tau'=\{\xi'_n\}$ are not finitely isomorphic, then for some~$n$ the partitions~$\xi_n$ and~$\xi'_n$ are not isomorphic, and hence their invariants contained in the minimal model will differ.

It is clear from the above that for every Markov chain with finite state spaces there exists a~minimal Markov chain generating (as the tail filtration) a~filtration finitely isomorphic to the given one. The proposition is proved.
\end{proof}

We see that the \textit{problem of finite classification of locally finite filtrations is equivalent to the problem of constructing the space of minimal equipped graded graphs with a~measure on the path space that agrees with the equipment,
i.\,e., the space of complete invariants of such filtrations} (for more on the space of filtrations, see Sec.~\ref{ssec5.6} below).

Unfortunately, passing to minimal models and minimal filtrations does not simplify the study of filtrations, since every filtration can be arbitrarily closely, in a~natural sense, approximated by minimal ones. However, minimality brings us nearer to the fundamental notion of standardness.

\subsection{Minimality of graphs, standardness, and the general standardness criterion}
\label{ssec5.2}
The following important definition will be often used in what follows.

\begin{definition}
\label{def7}
A~locally finite ergodic filtration is called standard if it is isomorphic to a~minimal filtration.\footnote{See footnote\,\raisebox{1pt}{{\scriptsize{\ref{fnt7}}}} about the term ``finitely determined filtration.''}
\end{definition}

For example, a~$p$-adic filtration is standard if it is isomorphic to a~Bernoulli filtration; in other words, in the class of filtrations finitely isomorphic to Ber\-noul\-li ones, only the Bernoulli filtrations themselves and those isomorphic to them are standard.

The difficulty is that a~filtration isomorphic to a~minimal one
(i.\,e., a~standard filtration) is not necessarily minimal, in other words, is not necessarily the tail filtration of a~minimal multigraph (or a~minimal Markov chain). Thus the
\textit{minimality property for filtrations is not invariant under isomorphism, and the ``invariant hull'' of this property is exactly ``standardness.''}

Hence the problem arises of how to describe standard filtrations in invariant (intrinsic) terms,
i.\,e., how to check that a~given filtration is isomorphic to a~standard one.

In the next theorem we give an invariant characterization of a~standard filtration (the ``standardness criterion'').

Let $\tau=\{\xi_n\}_n$ be an ergodic locally finite filtration, and let $\eta_n$ be the finite partition of the space~$X/\xi_n$
into the maximal classes of elements $C\in
\xi_n$ on which the restrictions of the finite filtration~$\{\xi_k\}_{k=1}^{n-1}$ are isomorphic (equivalently, the corresponding measured trees are isomorphic). Fix an arbitrary isomorphism of these elements with some typical element~$\overline C$ regarded as a~measured tree. Then, obviously, every element~$D_i$,
$i=1,\dots,r_n$, of the partition~$\eta_n$ is a~direct product:
$$
D_i\simeq \overline C \times (D_i/\xi_n), \qquad D_i \in \eta_n.
$$
Thus we have defined an independent complement~$\xi^-_{n,i}$ to the restriction of the partition~$\xi_n$ to~$D_i$; this is a~finite partition of~$D_i$.

\begin{theorem}
\label{th6}
A~locally finite ergodic filtration $\tau=\{\xi_n\}_n$
of a~Lebesgue space $(X,\mu)$ with continuous measure is standard if and only if, in the above notation, for every
$\varepsilon >0$ and every measurable set~$A$ there exists a~number~$n$ and a~set~$A'$ with $\mu(A \bigtriangleup A')<\varepsilon$ such that
$$
A'=\bigcup_{i=1}^{r_n} A'_i
$$
and~$A'_i$ is measurable with respect to the independent complement~$\xi^-_{n,i}$.
\end{theorem}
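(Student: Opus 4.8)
The plan is to route both implications through the canonical minimal model of Section~\ref{ssec5.1}. From $\tau$ one builds the minimal multigraph $\Gamma(\tau)$ and the measure $\mu_{\Gamma(\tau)}$ on its path space, together with a canonical measure-preserving homomorphism $\Phi\colon(X,\mu)\to(T(\Gamma(\tau)),\mu_{\Gamma(\tau)})$ sending a point $x$ to its \emph{itinerary} --- the sequence of tree-types, equivalently (by Proposition~\ref{pr3} and minimality) of vertices, through which the classes $\xi_n(x)$ pass. By construction $\Phi$ carries $\xi_n$ to the tail partition of $\overline\tau$ and exhibits $\tau$ as finitely isomorphic to $\overline\tau$ (Proposition~\ref{pr4}). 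Since $\overline\tau$ is a minimal filtration, Definition~\ref{def7} tells us that $\tau$ is standard as soon as $\Phi$ is an isomorphism $\mathrm{mod}\,0$, so the crux is to match the stated approximation condition with the assertion that $\Phi$ is an isomorphism.

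First I would prove the key identification lemma: for each $n$ the sets of the form $A'=\bigcup_{i=1}^{r_n}A'_i$ with $A'_i$ measurable with respect to the independent complement $\xi^-_{n,i}$ coincide $\mathrm{mod}\,0$ with the preimage algebra $\Phi^{-1}(\mathcal C_n)$, where $\mathcal C_n$ is the algebra of cylinder sets of order $n$ in $T(\Gamma(\tau))$. Indeed, membership of the $\xi_n$-class in $D_i\in\eta_n$ records exactly the level-$n$ vertex $v_i$ (the tree-type), while the product decomposition $D_i\simeq\overline C\times(D_i/\xi_n)$ identifies the independent-complement coordinate with a leaf of $\overline C$, i.e. with the initial path from $\varnothing$ to $v_i$; together these record precisely the itinerary up to level $n$. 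A set that is a union of such leaf-fibres over each $D_i$ is thus exactly a set determined by the initial segment of the path, that is, a set in $\Phi^{-1}(\mathcal C_n)$, and conversely.

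Granting this lemma the two directions become short. If $\tau$ is standard, then by isomorphism-invariance of the (intrinsically defined) condition I may assume $\tau=\overline\tau$ is the tail filtration of the minimal graph and $\Phi=\mathrm{id}$; the condition then asserts that every measurable set is approximable by a cylinder set of some finite order, which holds because $\bigcup_n\mathcal C_n$ is dense in the Borel structure of the path space. Conversely, the condition says precisely that the increasing algebra $\bigcup_n\Phi^{-1}(\mathcal C_n)$ is dense in $\mathcal B_X$ for the metric $\mu(\,\cdot\,\bigtriangleup\,\cdot\,)$; since $\bigcup_n\mathcal C_n$ generates the Borel $\sigma$-algebra of $T(\Gamma(\tau))$, density forces $\Phi^{-1}(\mathcal B_{T(\Gamma(\tau))})=\mathcal B_X$ $\mathrm{mod}\,0$, and by Rokhlin's theory a measure-preserving map of Lebesgue spaces inducing an isomorphism of measure algebras is a point-isomorphism $\mathrm{mod}\,0$. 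Hence $\Phi$ is an isomorphism and $\tau\simeq\overline\tau$ is standard.

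The main obstacle is the identification lemma, and specifically the bookkeeping forced by tree automorphisms and multiple edges. When $\overline C$ has a nontrivial group of symmetries the isomorphisms $C\simeq\overline C$ used to form the product decomposition are not unique, so $\xi^-_{n,i}$, and with it the family of admissible sets $A'$, depends on these choices. I would fix the choices compatibly with the edge-labelling of $\Gamma(\tau)$, so that leaves of $\overline C$ correspond to honest initial paths and $\mathcal F_n=\Phi^{-1}(\mathcal C_n)$ on the nose; it then remains to check that the \emph{validity} of the approximation condition is insensitive to the choice (different choices conjugate $\mathcal F_n$ by a measurable family of automorphisms, but the union over $n$ has the same closure), so that the criterion is genuinely intrinsic. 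Establishing this, together with the properties of $\Phi$ claimed in Section~\ref{ssec5.1}, is where the real work lies; the reduction to ``$\Phi$ is an isomorphism'' is then formal.
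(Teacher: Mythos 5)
Your proposal is correct and follows essentially the same route as the paper: the paper's (very terse) proof likewise reduces both implications to the statement that the cylinder sets of the minimal model form a basis of the $\sigma$-algebra, via exactly your identification of the sets measurable with respect to the independent complements $\xi^-_{n,i}$ with preimages of cylinders under the canonical map onto the minimal model. Your more detailed treatment, including the caveat about the non-uniqueness of the isomorphisms $C\simeq\overline C$ in the presence of tree symmetries, fills in precisely the point the paper itself flags with ``the choice of these independent complements is subject to additional considerations.''
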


The condition of the theorem means that every measurable subset in~$X$, for sufficiently large~$n$ and
up to a modification
on a~set of small measure,  breaks into a~union of sets  measurable with respect to the maximal possible independent complement to the restrictions of the partition~$\xi_n$. An important fact here is the maximality of the restrictions of~$\xi_n$ for which such a~complement exists. However, the choice of these independent complements is subject to additional considerations.

For filtrations finitely isomorphic to Bernoulli ones (for example, for dyadic filtrations), the partitions~$\xi_n$ themselves have independent complements, so  for such filtrations the criterion consists in verifying the possibility of constructing coherent independent complements constituting a~basis of the space.

In this sense, the condition of Theorem~\ref{th6} simply reduces the situation to the case of a~homogeneous filtration, where it is \textit{tautological}.

\begin{proof}[of Theorem~\ref{th6}]
The ``if'' part is equivalent to the assertion that the partitions into cylinders in the path space of the graph from the definition of a~standard model (see Definition~\ref{def7}) constitute a~basis of the $\sigma$-algebra of measurable sets. The ``only if'' part follows from the fact that the basis of cylinders is exactly the basis of independent complements to the (maximal) restrictions of the partitions of the filtration mentioned above. The theorem is proved.
\end{proof}

The condition of Theorem~\ref{th6} shows that the standardness of a~filtration is in a~sense the property of its decomposability into Bernoulli components. For finitely Bernoulli filtrations, this is exactly the Bernoulli property,
i.\,e., independence. Hence the notion of standardness is a~generalization of the notion of independence.

In the next section we will formulate a~concrete standardness criterion, which uses and generalizes the criterion for finitely Bernoulli filtrations suggested by the author~\cite{59} in the 1970s.

It follows from the conditions of the theorem that a~possible obstacle to standardness is the existence of a~special group of symmetries preserving the measure in the path space of the graph.

\subsection{The combinatorial standardness criterion}
\label{ssec5.3}
The general standardness criterion given above (Theorem~\ref{th6}) requires checking the measurability of sets with respect to a~system of independent complements, but does not show how one can do this. In this section we give a~more constructive, in fact combinatorial, method of verifying standardness.

\subsubsection{The combinatorial criterion in terms of partitions or metrics}
\label{sssec5.3.1}
We start with the standardness criterion for finitely Bernoulli filtrations; as we will see, the general case will be obtained by ``mixing'' them. The criterion to be stated differs from the standardness criterion suggested by the author in the 1970s for dyadic sequences only by a~more universal statement. Note that in the form given below this criterion holds also for filtrations with continuous conditional measures.

Let $\tau=\{\xi_n\}$ be a~locally finite filtration of a~Lebesgue space $(X,\mu)$ with continuous measure that is finitely isomorphic to a~Bernoulli filtration, i.\,e., for every~$n$ the quotient partition
$\xi_n/\xi_{n-1}$, $n=1,2,\dots$, is isomorphic to a~partition whose all elements are finite and all conditional measures coincide and are equal to the distribution determined by a~finite probability vector~$p^{(n)}$, $n=1,2,\dots$, possibly different for different~$n$. The leading special case is the dyadic one:
$$
p^{(n)}\equiv (1/2,1/2)\quad\text{for all~$n$}.
$$

In our case, an element~$C$ of the partition~$\xi_n$, regarded as a~finite set endowed with the conditional probability measure,  has the structure of a~direct product of finite spaces endowed with the measure
$\prod\limits_{k=1}^n p_k$, but it is more convenient to represent it as a~homogeneous tree of rank~$n$ with the product measure on leaves.

Assume that on the space $(X,\mu)$ we have a~measurable bounded function~$f$ with real values, or a~metric, or a~semimetric~$\rho$; then on the space $X/\xi_n$ we can introduce a~distance $d^n_f(\,\cdot\,{,}\,\cdot\,)$ ($d^n_{\rho}$), which should be interpreted as the distance between elements~$C_1$,~$C_2$
of the partition~$\xi_n$ regarded as measured trees. This distance is a~version of the Kantorovich metric for probability measures in a~metric space, but in our case, there is an additional condition.

Namely, we consider so-called \textit{couplings~$\psi$},
i.\,e., measures on the direct product  $C_1\times C_2$
with the given marginal projections~$\mu^{C_1}$ and~$\mu^{C_2}$ equal to the conditional measures. The additional condition is that these couplings are not simply measures, but also couplings of trees,
i.\,e., $\psi$ should be concentrated on some tree, as a~subset in $C_1\times C_2$,
whose projections to both coordinates are~$C_1$ and~$C_2$ as trees. An example of a~coupling of trees is the graph of a~measure-preserving isomorphism of one tree onto another one.

The measures~$\psi(\,\cdot\,{,}\,\cdot\,)$ are called \textit{couplings}, or
\textit{transportation plans}; the set of all couplings will be denoted by~$\Psi_{C_1,C_2}$.\footnote{The metric or semimetric~$d^n_{\rho}$ is exactly the Kantorovich metric on the set of probability measures of the (semi)metric space
 $(Y,\rho)$; in our case, we take the semimetric $\rho(x,y)=|f(x)-f(y)|$
on $Y=C_1\cup C_2$.}

Then the distance~$d^n_f$ ($d^n_{\rho}$) is given by the formula
$$
d^n_f(C_1,C_2)=\min_{\psi \in \Psi_{C_1,C_2}} \sum_{x\in C_1,y\in C_2}
|f(x)-f(y)|\psi(x,y)
$$
(in case of metrics, one should replace $|f(x)-f(y)|$ with~$\rho(x,y)$).

\begin{theorem}
\label{th7}
An ergodic filtration finitely isomorphic to a~Bernoulli one is standard, i.\,e., isomorphic to a~Bernoulli filtration, if and only if the following condition holds: either

-- for every measurable function~$f$ on the space $(X,\mu)$, or

-- for every admissible metric~$\rho(\,\cdot\,{,}\,\cdot\,)$
on the space $(X,\mu)$, or

-- for every cut semimetric\footnote{That is, a~semimetric $\rho(x,y)$ equal to~1 if~$x$,~$y$ lie in different elements of a~finite partition of the space $(X,\mu)$, and to~$0$ if~$x$,~$y$ lie in the same element.}

\noindent the following equality holds:
$$
\lim_{n\to \infty}\,\int_{X/\xi_n \times X/\xi_n}
d^n_f(C_1,C_2)\,d\mu_{\xi_n}\,d\mu_{\xi_n}=0.
$$
\end{theorem}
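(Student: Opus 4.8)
The plan is to separate the statement into two independent tasks: the mutual equivalence of the three conditions (functions, admissible metrics, cut semimetrics), and the equivalence of any one of them to standardness. For the first task I would note that a real function $f$ and the semimetric $\rho_f(x,y)=|f(x)-f(y)|$ produce the same quantity $d^n$, so the function and semimetric versions coincide verbatim. To pass between functions and cut semimetrics I would use the layer--cake identity
\begin{equation*}
|f(x)-f(y)|=\int_{\mathbb R}\rho_t(x,y)\,dt,\qquad \rho_t(x,y)=\mathbf 1\{t\text{ lies between }f(x)\text{ and }f(y)\},
\end{equation*}
which expresses $\rho_f$ as a positive integral of the cut semimetrics $\rho_t$ attached to the two--block partitions $\{f\le t\},\{f>t\}$. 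Since the Kantorovich quantity $d^n_\rho$ is linear in the underlying semimetric $\rho$, Fubini gives $\iint d^n_f=\int_{\mathbb R}\iint d^n_{\rho_t}\,dt$, so vanishing of the limit for all cut semimetrics forces it for every $f$ by dominated convergence; the reverse is trivial, cut semimetrics being of the form $\rho_f$ for indicators. Admissible metrics are treated the same way, using that they lie in the closed cone generated by cut semimetrics.

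For the ``only if'' direction (standard $\Rightarrow$ limit $=0$) I would work in the Bernoulli model $\xi_n=\bigvee_{k>n}\eta_k$. If $f$ depends on only the first $N$ coordinates, then for $n>N$ every element $C\in\xi_n$ carries literally the same labeled measured tree (the tail coordinates indexing $C$ do not enter $f$), so the coordinatewise identification is a tree--coupling with $d^n_f(C_1,C_2)=0$ for all pairs, and the integral is $0$. For general $f\in L^1$ I would approximate by such a cylinder function $f_N$ and use the elementary bound
\begin{equation*}
\iint d^n_f\,d\mu_{\xi_n}\,d\mu_{\xi_n}\le \iint d^n_{f_N}\,d\mu_{\xi_n}\,d\mu_{\xi_n}+2\|f-f_N\|_1,
\end{equation*}
which follows from $|f(x)-f(y)|\le|f_N(x)-f_N(y)|+|(f-f_N)(x)|+|(f-f_N)(y)|$ after integrating the two remainder terms against the marginals $\mu^{C_1},\mu^{C_2}$ of the coupling (Fubini/Rokhlin turns each into $\|f-f_N\|_1$). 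Letting $n\to\infty$ kills the first term, and then $N\to\infty$ kills the rest.

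The substance lies in the ``if'' direction. Here I would reduce to the already established general criterion, Theorem~\ref{th6}: since the filtration is finitely Bernoulli, each $\xi_n$ already admits an honest independent complement, so it suffices to show that every measurable $A$ can, for large $n$, be approximated within $\varepsilon$ by a set measurable with respect to a complement $\xi^-_n$ of $\xi_n$, with these complements chosen coherently across levels. The hypothesis $\iint d^n_f\to 0$, applied to a countable generating family of functions, says exactly that for large $n$ an overwhelming (in $\mu_{\xi_n}\times\mu_{\xi_n}$--measure) set of pairs $C_1,C_2\in\xi_n$ admit tree--couplings under which the $f$--labelings nearly coincide. From this averaged closeness I would extract, by a measurable selection, nearly $f$--preserving tree isomorphisms of almost every $C$ onto one reference tree $\overline C$; identifying elements through these maps trivializes the dependence of the generating functions on the tail and exhibits the desired independent complement, carrying $A$ to a $\xi^-_n$--measurable set up to $\varepsilon$.

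The hard part will be upgrading the \emph{averaged, pairwise} smallness furnished by the integral condition into a \emph{single coherent} system of independent complements, equivalently into an honest isomorphism with a Bernoulli filtration. Two issues must be controlled at once: first, the couplings realizing $d^n_f$ are couplings \emph{of trees} (measures supported on subtrees respecting the hierarchy, not arbitrary transportation plans), and this constraint has to survive both the passage to the common reference $\overline C$ and the simultaneous treatment of every function in the generating family; second, the complements produced at successive levels $n$ must be made compatible, so that they assemble into a genuine basis rather than a sequence of unrelated approximations. The latter is a martingale--type passage to the limit, and it is precisely at this step that ergodicity ($\bigcap_n\xi_n$ trivial) is invoked to ensure that the approximations stabilize into the Bernoulli structure required by Theorem~\ref{th6}.
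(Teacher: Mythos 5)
Your proposal follows essentially the same route as the paper: the three formulations are interchanged via cut semimetrics exactly as in the paper's remarks around Theorem~\ref{th8}, the ``only if'' direction is the paper's argument (cylinder functions of the independent generators are independent of $\xi_n$ for large $n$, then approximate in $L^1$), and the ``if'' direction is the inductive construction of a coherent basis of independent complements, with the genuine difficulty honestly flagged rather than resolved. The one tool the paper names that your sketch lacks is the ``main lemma'' driving that induction: the integral condition is inherited by the quotient filtrations $\{\xi_{n+k}/\xi_k\}_{n\geqslant 0}$ for every $k$, which is what lets one pass to $X/\xi_k$ at each step and approximate a basis chosen in advance (the scheme of the lacunary isomorphism theorem~\cite{58}), ergodicity guaranteeing that the resulting $\{\eta_n\}$ generate. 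Since the paper itself only sketches this step and defers details to~\cite{69}, your level of incompleteness matches the source; but if you want to close the gap you identify, heredity under quotients is the lemma to prove.
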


For what follows, it is convenient to give an equivalent $\varepsilon$-form of the theorem condition:
\begin{align*}
&\forall\,\varepsilon>0,\quad
\exists N,\quad
\forall\,n>N,\quad
\exists D_n\subset X/{\xi_n}\colon
\\
&\qquad\mu_{\xi_n}(D_n)>1-\varepsilon\quad\text{and}\quad
\sup_{C_1,C_2 \in D_n} d^n_f(C_1,C_2)<\varepsilon.
\end{align*}

The theorem condition is especially clear for dyadic filtrations of cut semimetrics
with respect to partitions into~$k$ subsets: in this case, a~substantial simplification is due to the fact that we may take couplings to be the graphs of isomorphisms of trees; i.\,e., we may consider the Hamming metric on the set of vertices of the cube~$\mathbf{k}^{2^n}$, and then (instead of couplings~$\psi$) take the distance between the orbits of the action of the group of automorphisms of the dyadic tree on this cube. In other words,
\textit{the standardness criterion reduces to the fact that the space of orbits of the action of the group of automorhisms of the tree on the cube~$\mathbf{k}^{2^n}$ collapses in this metric to a~single point} (see~\cite{59},~\cite{69}).

The proof of the criterion for homogeneous, i.\,e.,
$\{r_n\}$-adic, filtrations exactly coincides with the proof for dyadic ($r_n\equiv 2$) filtrations and was suggested in~\cite{65},~\cite{69}. Later, several other expositions appeared, see, e.\,g.,~\cite{7},~\cite{13},~\cite{14}. The scheme of the proof remains exactly the same also for finitely Bernoulli filtrations, as well as for continuous filtrations (in which all conditional measures of all partitions $\xi_n/\xi_{n-1}$, $n=1,2,\dots$,
are continuous).

We will explain only the main idea of the proof of Theorem~\ref{th7};
for details, see~\cite{69}. First of all, the ``only if'' part follows from the fact that, by definition,  for a~Bernoulli filtration there exists a~sequence of independent partitions~$\{\eta_n\}_n$  constituting a~basis of the space such that
$$
\xi_n=\bigvee_{k=n}^{\infty}\eta_k,\qquad
n=0,1,2,\dots\,.
$$
Namely, it follows that for all finite partitions
$\bigvee_{k=1}^n \eta_k$, the theorem condition follows from their independence with
the corresponding partitions~$\xi_n$. On the other hand,~$\{\eta_n\}_n$ is a~basis, and hence the condition is satisfied for any finite partitions.

The proof of the ``if'' part is more laborious, one should prove that the theorem condition implies the existence of a~sequence of independent partitions~$\{\eta_n\}_n$ associated with~$\{\xi_n\}$ as described above that is a~basis of the measure space. This can be done again by induction, and the main lemma is that the theorem condition can be extended to the quotient filtrations
$\{\xi_{n+k}/\xi_k\}_{n=0}^\infty$, $k=1,2,\dots$, hence one can construct a~basis~$\{\eta_n\}_n$ by successively passing to quotient spaces and approximating some basis chosen in advance. For the first time, this scheme was used in the proof of the lacunary isomorphism theorem for dyadic sequences~\cite{58}.

\subsubsection{Comments on the criterion}
\label{sssec5.3.2} The statement of Theorem~\ref{th7} involves the notion of an admissible metric (see~\cite{71}).
Recall that an admissible metric in a~Lebesgue space is a~metric defined on a~set of full measure as a~measurable function of two variables such that the $\sigma$-algebra generated by all balls of positive radius in this metric generates the whole $\sigma$-algebra of measurable sets (see~\cite{92}). If a~function of two variables satisfies the conditions of a~metric (nonnegativity, symmetry, triangle inequality) as conditions $\operatorname{mod} 0$ on classes of metrics, as well as the above condition on the $\sigma$-algebra of balls, then the $\operatorname{mod} 0$ class of this function contains an admissible metric (see~\cite{61}).

\begin{theorem}
\label{th8}
Let $\rho$ be an arbitrary admissible metric on a~Lebesgue space with continuous measure. An ergodic filtration finitely isomorphic to a~Bernoulli one is standard,
i.\,e., isomorphic to a~Bernoulli filtration, if and only if the following condition holds: for every $\varepsilon>0$ there exists a~positive integer~$N$ such that for every $n>N$ and some set
$$
A_n \subset X/\xi_n, \qquad
\mu_{\xi_n}(A_n)>1-\varepsilon,
$$
the Kantorovich distance between the conditional measures on arbitrary elements
$C_1,C_2\subset A_n$ of the partition~$\xi_n$ in the space~$(X,\rho)$
regarded as a~metric space does not exceed~$\varepsilon$. This condition is or is not satisfied independently of the choice of~$\rho$.
\end{theorem}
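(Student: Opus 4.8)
The plan is to deduce Theorem~\ref{th8} from the already established criterion of Theorem~\ref{th7}, isolating the two features in which the present statement differs from it: here the distance between conditional measures is the \emph{ordinary} Kantorovich distance (couplings are not required to respect the tree structure), and only a \emph{single} admissible metric~$\rho$ is tested rather than all functions, admissible metrics, and cut semimetrics at once. Since standardness is an intrinsic, isomorphism-invariant property (Definition~\ref{def7} together with Theorem~\ref{th6}), once I show that for every admissible~$\rho$ the displayed condition is equivalent to standardness, the asserted independence of the choice of~$\rho$ becomes automatic: the truth value of the condition cannot depend on~$\rho$ because the property it characterizes does not.

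The easy implication is that standardness forces the condition for every admissible~$\rho$. If $\tau$ is standard, then the ``only if'' part of Theorem~\ref{th7}, applied to~$\rho$, gives $\int_{X/\xi_n\times X/\xi_n} d^n_\rho(C_1,C_2)\,d\mu_{\xi_n}\,d\mu_{\xi_n}\to 0$. Because every tree-coupling belongs to $\Psi_{C_1,C_2}$, the ordinary Kantorovich distance is bounded above by $d^n_\rho(C_1,C_2)$ pointwise, so its double integral tends to~$0$ as well. Passing from this $L^1$-convergence to the $\varepsilon$-form of the statement is then a routine application of Chebyshev's inequality, which produces the set $A_n$ of measure close to~$1$ on which the Kantorovich distance between typical elements is small.

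The substantial direction is the converse, where the weakening from tree-couplings to arbitrary couplings must be compensated by the admissibility of~$\rho$. Here I would propagate the ordinary Kantorovich condition for the single metric~$\rho$ to the condition of Theorem~\ref{th7} for \emph{every cut semimetric}, and then invoke the ``if'' part of Theorem~\ref{th7}. Given a finite partition and its cut semimetric $\rho_P$, I would first use a Luzin-type approximation---legitimate precisely because the balls of~$\rho$ generate the whole $\sigma$-algebra---to replace the blocks, off a set of arbitrarily small measure, by $\rho$-separated sets; on the complement one then has a domination $\rho_P\le c\,\rho$, so the ordinary Kantorovich distance for~$\rho_P$ is controlled by that for~$\rho$ up to a small error, and the condition propagates from~$\rho$ to~$\rho_P$. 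For cut semimetrics the distinction between ordinary and tree couplings disappears: as noted after Theorem~\ref{th7}, the optimal transport for a cut semimetric between the conditional measures on two elements $C_1,C_2$, which carry isomorphic measured-tree structures, may be realized by graphs of tree isomorphisms. Hence the ordinary condition for all~$\rho_P$ is exactly the cut-semimetric form of the criterion of Theorem~\ref{th7}, and applying that theorem yields standardness.

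The main obstacle is precisely this bridge in the converse: showing that ordinary Kantorovich clustering with respect to one admissible metric is as strong as the tree-coupling clustering required by Theorem~\ref{th7}. Both ingredients are delicate---the ball-generation/Luzin step must be carried out uniformly enough that the exceptional sets can be absorbed into the set $A_n$ permitted by the statement, and the reduction of optimal couplings to tree-respecting ones for cut semimetrics must be quantified so that it survives the limit $n\to\infty$. Once this is achieved, the equivalence of the condition with standardness holds for each admissible~$\rho$ separately, and the closing assertion---that the condition is satisfied, or not, independently of~$\rho$---follows at once.
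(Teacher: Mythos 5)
Your first direction is fine as far as it goes, but the converse rests on a misreading of the statement that turns into a genuine error. In this paper ``the Kantorovich distance'' between the conditional measures on $C_1,C_2$ is the quantity $d^n_\rho$ of Theorem~\ref{th7} (see the footnote there), i.e.\ the transport cost minimized over \emph{tree} couplings only; it is not the unrestricted optimal-transport distance on $(X,\rho)$. Under your reading the theorem would be false: for a bounded admissible metric, the reverse martingale convergence theorem gives $\int f\,d\mu^{C_n(x)}\to\int f\,d\mu$ a.e.\ for each bounded measurable $f$, hence $\mu^{C_n(x)}\to\mu$ weakly a.e.; since the ordinary $W_1$ for a bounded metric metrizes weak convergence, Egorov then produces exactly the sets $A_n$ required by the statement for \emph{every} ergodic filtration, standard or not. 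So the unrestricted-coupling condition is equivalent to the zero--one law, not to standardness --- this is precisely the gap between the martingale theorem and standardness discussed in Sec.~\ref{ssec5.5}. The concrete step of your bridge that fails is the claim that for a cut semimetric the distinction between ordinary and tree couplings disappears. The remark after Theorem~\ref{th7} says only that among \emph{tree} couplings one may restrict to graphs of tree isomorphisms. The unrestricted optimum for the cut semimetric of a partition $\{A_i\}$ equals $\tfrac12\sum_i\bigl|\mu^{C_1}(A_i)-\mu^{C_2}(A_i)\bigr|$, which sees only the histograms of values and tends to $0$ by martingale convergence, whereas $d^n_{\rho_P}$ compares the configurations of values on the trees up to tree automorphism; for the nonstandard examples of Sec.~\ref{sec6} the former vanishes in the limit while the latter does not. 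Consequently your converse, if it worked, would prove that every ergodic finitely Bernoulli filtration is standard, contradicting the main results of the paper.

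The paper's own argument runs in the opposite direction and stays with tree couplings throughout: cut semimetrics are the finitely-valued-function case of Theorem~\ref{th7}; sums of cut semimetrics are dense among admissible (semi)metrics; and the tree-coupling Kantorovich functional is continuous with respect to limits of metrics, whence the condition holds for one admissible $\rho$ if and only if it holds for all of them. Your Luzin-type domination $\rho_P\le c\,\rho$ off a small set is actually a reasonable tool for the ``one metric suffices'' half, but it must be applied to $d^n_\rho$ and $d^n_{\rho_P}$ --- a tree coupling that is nearly optimal for $\rho$ is still a tree coupling and is cheap for $\rho_P$ --- not to the unrestricted Kantorovich distances.
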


Note that if in the condition of the last theorem  we take $\rho$ to be the cut semimetric corresponding to a~partition, then we obtain the condition of the theorem for functions with finitely many values. Hence the conclusion of the  theorem for cut semimetrics follows from the conclusion for functions. On the other hand, every semimetric is the limit of sums of cut semimetrics, hence the conclusion of the theorem for functions implies the conclusion for a~dense family of semimetrics.
Finally, it is not difficult to check that the fact that the elements of the partition endowed with the conditional measures approach one another in the introduced metric remains true when passing to the limit along a sequence
of metrics.\footnote{More exactly, the Kantorovich metric on the space of probability measures of a~given Borel space is continuous with respect to the weak topology in the space of admissible metrics (regarded as functions of two variables). The same considerations imply that the conclusion of the theorem is independent of the choice of a~metric.} Note, by the way, that the idea of varying metrics and the notion of an admissible metric
for a~fixed measure in a~space was suggested by the author in the 1980s and has been repeatedly used, for example, in the definition of scaled entropy (see~\cite{71},~\cite{82}).

The conclusion of the theorem can be equivalently stated as the convergence of distances to zero in measure, etc.

\subsection{The standardness criterion for an arbitrary filtration}
\label{ssec5.4}
How one should modify the condition of Theorem~\ref{th6} to obtain a~standardness criterion for an arbitrary locally finite filtration? In fact, the condition for the general case reduces to a~mixture of conditions on each finitely Bernoulli component of the filtration.

We will need the finite partitions~$\delta_n$ on the spaces
$(X/\xi_n, \mu_{\xi_n})$ introduced when defining minimality in Sec.~\ref{ssec5.1}. Recall that two elements~$C_1$,~$C_2$ of the partition~$\xi_n$ lie in the same element of~$\delta_n$ if the finite filtration~$\{\xi_k\}_{k=1}^{n-1}$ induces on~$C_1$,~$C_2$ isomorphic filtrations, or, equivalently, if these elements~$C_1$,~$C_2$ are isomorphic as trees endowed with the conditional measures. It is essential that the partition~$\delta_n$ is determined only by the conditional measures. For homogeneous filtrations, this is the trivial partition of the space $X/\xi_n$, and for minimal graphs, this is the partition into singletons at each level.

Then the statement in the $\varepsilon$-language  is as follows.

\begin{theorem}
\label{th9}
Let $\tau=\{\xi_n\}_{n=0}^{\infty}$ be an arbitrary finite filtration in a~Lebesgue space $(X,\mu)$
with continuous measure. The filtration~$\tau$ is isomorphic to a~standard filtration if and only if the following condition holds: for every $\varepsilon >0$ there exists~$N$ such that for every $n>N$ the partition of~$X/\xi_n$ into the types of elements of~$\xi_n$ isomorphic with respect to the previous fragment of the filtration,
$$
X/\xi_n=\bigcup_{i=1}^{k_n} A^n_i, \qquad
\{A^n_i\}_i=\delta_n,
$$
has the following property:
\begin{align*}
&\text{there exists~$D$}, \ D\subset X/{\xi_n},\
\mu_{\xi_n}(D)>1-\varepsilon,
\\
&\qquad\text{such that} \ \sup_{C_1,C_2 \in A^n_i\cap D}
d_f(C_1,C_2)<\varepsilon,
\end{align*}
in other words, inside each element of the partition~$\delta_n$, the distances between most elements of the partition~$\xi_n$ are small with respect to the metric~$d_f$ or~$d_{\rho}$.
\end{theorem}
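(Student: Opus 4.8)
The plan is to reduce the general criterion to the finitely Bernoulli criterion of Theorem~\ref{th7} (equivalently the metric form, Theorem~\ref{th8}) by conditioning on the finite type partition $\delta_n$, and then to combine these conditional statements with the independent-complement formulation of standardness supplied by Theorem~\ref{th6}. The first observation is that $\delta_n$ coincides with the partition $\eta_n$ appearing in Theorem~\ref{th6}: both are the partition of $X/\xi_n$ into the maximal classes of elements $C\in\xi_n$ that are isomorphic as measured trees with respect to the restricted filtration $\{\xi_k\}_{k=1}^{n-1}$. Fixing a typical element $\overline C$ in each class $A^n_i$ and an isomorphism of each $C\in A^n_i$ with $\overline C$ yields the product decomposition $D_i\simeq\overline C\times(D_i/\xi_n)$ and hence the independent complement $\xi^-_{n,i}$. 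Over each $A^n_i$ the filtration then looks exactly like a homogeneous (finitely Bernoulli) piece with a fixed conditional tree type, and the quantity $d_f(C_1,C_2)$ for $C_1,C_2\in A^n_i$ is precisely the coupling (Kantorovich) distance of Theorem~\ref{th7} between two elements of a homogeneous filtration; so the hypothesis of the present theorem is literally the $\varepsilon$-form of the finitely Bernoulli criterion applied separately inside each type class.

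For the ``only if'' direction I would argue as follows. If $\tau$ is standard it is isomorphic to its minimal model, so by Proposition~\ref{pr3} the partitions into cylinders form a basis of the measurable sets. Restricting to the sub-$\sigma$-algebra supported over a type class $A^n_i$, the induced filtration is homogeneous of the fixed type $\overline C$, and standardness of the whole filtration forces standardness of each such component; Theorem~\ref{th7} (in the metric form Theorem~\ref{th8}) then gives, for each $i$, the Kantorovich convergence $\sup_{C_1,C_2\in A^n_i\cap D}d_f(C_1,C_2)<\varepsilon$ on a set $D$ of measure $>1-\varepsilon$. Since $\delta_n$ is finite, choosing $N$ uniformly over the finitely many classes delivers the stated condition.

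For the ``if'' direction --- the substantial half --- I would verify the criterion of Theorem~\ref{th6}. Fix a measurable set $A$ and $\varepsilon>0$. The hypothesis gives, for large $n$, a set $D\subset X/\xi_n$ of measure $>1-\varepsilon$ on which, inside every type class $A^n_i$, the conditional measures on the elements of $\xi_n$ are Kantorovich-close. On each class this is exactly the input needed to run the ``if'' part of the proof of Theorem~\ref{th7}: conditionally on $A^n_i$ one constructs, by the same inductive scheme (extending the condition to the quotient filtrations $\{\xi_{n+k}/\xi_n\}$ and successively approximating a preselected basis), a coherent sequence of independent complements $\xi^-_{n,i}$ whose joins form a basis of the fiber $\overline C$. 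Patching these constructions over the finitely many classes $i=1,\dots,k_n$ produces, up to the error coming from the exceptional set $X/\xi_n\setminus D$, a set $A'=\bigcup_i A'_i$ with $\mu(A\bigtriangleup A')<\varepsilon$ and each $A'_i$ measurable with respect to $\xi^-_{n,i}$ --- which is precisely the conclusion of Theorem~\ref{th6}, whence $\tau$ is standard.

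The main obstacle is exactly this patching in the ``if'' direction: the independent complements $\xi^-_{n,i}$ are defined separately on each piece $D_i$ and for each level $n$, and one must glue them coherently --- both across the finitely many type classes at a fixed level and across successive levels as $n$ grows --- so that the resulting partitions genuinely assemble into a single basis of $(X,\mu)$ rather than merely fiberwise bases. Controlling the accumulation of the small exceptional sets $X/\xi_n\setminus D$ through the induction, and checking that the admissibility of the metric $\rho$ (that balls generate the whole $\sigma$-algebra) makes the fiberwise Kantorovich convergence strong enough to approximate arbitrary measurable sets, is where the real work lies; the independence of the conclusion from the chosen metric $f$ or $\rho$ then follows, as in the footnote to Theorem~\ref{th8}, from continuity of the Kantorovich metric in the weak topology on admissible metrics.
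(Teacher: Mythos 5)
Your proposal follows essentially the same route as the paper: the "only if" direction via invariance of the criterion plus the fact that minimal (equivalently, type-classwise Bernoulli) filtrations satisfy it, and the "if" direction by running the inductive basis-construction of the finitely Bernoulli criterion (Theorems~\ref{th7}/\ref{th8}) conditionally inside each element of $\delta_n$. The paper itself is far terser — it simply asserts that the "if" part "does not differ from the previous case" and records the "only if" part as Proposition~\ref{pr5} — so your explicit identification of the patching of the fiberwise independent complements as the real technical content is a faithful and somewhat more honest account of the same argument.
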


It should be clear from this statement in what sense standardness is a~generalization of independence.

In fact, it differs from Theorem~\ref{th8} (i.\,e., the theorem for finitely Bernoulli filtrations) only in that the pairwise distances should be small only for \textit{most} pairs of isomorphic elements, and not for all such pairs. Thus the proof of the ``if'' part, which is the main part of the theorem, does not differ from the previous case. However, checking the condition is by no means easy.

It makes sense to state the ``only if'' part of the criterion as a~separate proposition.

\begin{proposition}
\label{pr5}
1) The filtrations satisfying the standardness criterion form an invariant class,
 i.\,e., a~filtration isomorphic to a~filtration satisfying the standardness criterion also satisfies this criterion.

\nopagebreak
2) A~minimal filtration satisfies the standardness criterion.
\end{proposition}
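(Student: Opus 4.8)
The plan is to derive both parts from the observation that the standardness criterion of Theorem~\ref{th9} is phrased entirely in terms of data that are invariant under measure isomorphism: the conditional measures of the partitions $\xi_n$, the type partitions $\delta_n$ (which, as was emphasized, are determined by the conditional measures alone), and the transportation distances $d^n_f$ (equivalently $d^n_\rho$) between elements of $\xi_n$. Together, parts (1) and (2) will furnish the ``only if'' half of Theorem~\ref{th9}: a standard filtration is by definition isomorphic to a minimal one, so once minimal filtrations are known to satisfy the criterion (part 2) and the criterion is known to be isomorphism-invariant (part 1), every standard filtration satisfies it. Throughout I would invoke Theorem~\ref{th8}, which guarantees that the validity of the criterion does not depend on the particular admissible metric used to test it.

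For part (1), let $T\colon(X,\mu)\to(X',\mu')$ be an isomorphism carrying $\tau=\{\xi_n\}$ onto $\tau'=\{\xi'_n\}$, and suppose $\tau'$ satisfies the criterion. I would pick any admissible metric $\rho'$ on $X'$ and pull it back by $\rho(x,y)=\rho'(Tx,Ty)$; since $T$ is a measure isomorphism, $\rho$ is admissible on $X$. As $T$ maps $\xi_n$ onto $\xi'_n$ preserving conditional measures, it induces an isomorphism $X/\xi_n\to X'/\xi'_n$ taking $\mu_{\xi_n}$ to $\mu'_{\xi'_n}$, sending each element $C$ to a measure-preservingly isomorphic tree $TC$, and hence carrying $\delta_n$ onto $\delta'_n$. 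Any coupling of trees for $(C_1,C_2)$ pushes forward to one for $(TC_1,TC_2)$ of equal cost, so $d^n_\rho(C_1,C_2)=d^n_{\rho'}(TC_1,TC_2)$. Transporting the large set $D'\subset X'/\xi'_n$ back to $D=\bar T^{-1}(D')$ then yields the criterion for $\tau$ in the metric $\rho$, and Theorem~\ref{th8} upgrades this to the criterion in general.

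For part (2), I would realize the minimal filtration as the tail filtration on its minimal graph and exploit the defining property of minimality (Proposition~\ref{pr3}): distinct vertices of a level carry non-isomorphic measured trees. Hence $\delta_n$ separates the vertices of level $n$, and a single type class $A^n_v$ consists precisely of the tail classes $C\in\xi_n$ passing through one vertex $v$. All such $C$ share the same set of initial segments $p$ (paths $\varnothing\to v$) as leaves, with identical conditional weights $w(p)$ depending only on $p$, so matching the leaf $p$ of $C_1$ to the leaf $p$ of $C_2$ is a measure-preserving coupling of trees $\psi_0$. On $\psi_0$ any function $f_m$ measurable with respect to the first $m\le n$ coordinates is constant along matched leaves, so its transportation cost vanishes. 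For a general bounded $f$, choose by martingale convergence a cylinder function $f_m$ with $\|f-f_m\|_{L^1}<\delta$; evaluating the cost on $\psi_0$ gives, for same-type $C_1,C_2$ and all $n\ge m$,
$$d^n_f(C_1,C_2)\le\int_{C_1}|f-f_m|\,d\mu^{C_1}+\int_{C_2}|f-f_m|\,d\mu^{C_2}.$$
By Theorem~\ref{th2} (Fubini) the function $g_n(C)=\int_C|f-f_m|\,d\mu^C$ satisfies $\int g_n\,d\mu_{\xi_n}=\|f-f_m\|_{L^1}<\delta$ for every $n$, so Markov's inequality produces a set $D$ with $\mu_{\xi_n}(D)>1-\varepsilon$ outside which $g_n\le\sqrt\delta$; then $d^n_f(C_1,C_2)\le 2\sqrt\delta<\varepsilon$ for same-type $C_1,C_2\in D$, which is exactly the condition of Theorem~\ref{th9} with $N=m$.

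The technical heart, and the only place where minimality is genuinely used, is the structural identification in part (2): that a type class is the family of paths through a single vertex and that these paths share their leaves, so that cylinder functions match exactly under the canonical coupling $\psi_0$. The remaining ingredients --- pulling back admissible metrics in part (1), the martingale approximation, the Fubini identity, and the Markov extraction of $D$ --- are routine. The one point deserving care is verifying that $\psi_0$ is a bona fide coupling of trees with the correct marginals, which holds because the conditional weight $w(p)$ of a leaf is built from the cotransition probabilities along $p$ alone and is therefore the same in every element of $A^n_v$.
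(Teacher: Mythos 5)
Your proposal is correct, but it takes a more unified and explicit route than the paper, whose own proof is a two-line sketch. For part 1) the paper simply observes that the claim ``directly follows from the statement of the criterion''; your pullback of an admissible metric and pushforward of couplings is exactly the content of that remark, spelled out. The real divergence is in part 2). The paper splits into two cases: for a minimal multigraph without multiple edges it notes that the type partition $\delta_n$ coincides with the partition into cylinder sets and that the $\sigma$-algebra generated by the independent complements is the whole $\sigma$-algebra, i.\,e., it verifies the criterion in the form of Theorem~\ref{th6} rather than the $\varepsilon$-form of Theorem~\ref{th9}; for multigraphs with multiple edges it reduces to Bernoulli components and cites the ``only if'' direction already established for finitely Bernoulli filtrations in Theorem~\ref{th7}. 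You instead verify the transportation-distance form of Theorem~\ref{th9} directly and uniformly: minimality (Proposition~\ref{pr3}) identifies each type class with the set of elements of $\xi_n$ passing through a single vertex, the identity matching of initial segments is a zero-cost coupling of trees for cylinder functions, and the general case follows by martingale approximation, the Fubini identity of Theorem~\ref{th2}, and Chebyshev's inequality. What your approach buys is a single argument covering graphs and multigraphs at once, with the quantitative set $D$ produced explicitly; what the paper's approach buys is brevity and a direct link to the independent-complement formulation of standardness. The two are equivalent, and your verification is complete.
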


\begin{proof}
The first claim directly follows from the statement of the criterion.

The proof of the second claim breaks into two cases. In the first (obvious) case, assume that a~minimal multigraph is a~graph, i.\,e., has no multiple edges. Then the partition into the types of elements of the partitions~$\xi_n$ coincides with the partition into cylinder sets, and the $\sigma$-algebra generated by all complements coincides with the whole $\sigma$-algebra. In the second case, the partition into types reduces to Bernoulli components, for which the ``only if'' part of the criterion is proved.

The proposition is proved.
\end{proof}

The standardness criterion is of interest in the case of graphs essentially different from minimal ones. For canonical equipments, i.\,e., central measures, the criterion can be somewhat simplified, and in this simplified form it was stated in our previous papers; but for filtrations that are not semihomogeneous, the simplified statement is not equivalent to the general one given above, and it is in this sense that the term ``standardness'' is used. We will return to this question in connection with the so-called internal metric (see~\cite{82},~\cite{79},~\cite{78},~\cite{81}, and
Sec.~\ref{sec7}).

\subsection{The martingale form of the standardness criterion  and intermediate conditions}
\label{ssec5.5}
It is natural to ask how, given a~Markov, or even arbitrary, one-sided random process, one can determine whether or not its tail filtration is standard. The problem is to state the standardness criterion in terms of the process.

The most natural form of the answer to this question, as well as to other questions related to the interpretation of weakenings of the independence condition in the theory of random processes,  looks as a~strengthening of martingale convergence theorems. The classical theorem for a~random process~$\{x_n\}_{n \leqslant 0}$ in the form we need it looks as follows.

\textit{If the tail filtration of the process
(i.\,e., the intersection of the past $\sigma$-algebras) is trivial,
$$
\bigcap_n{\mathfrak A}_n=\mathfrak N,
$$
then for every real functional~$F$ in~$k+1$ coordinates of this process, the conditional distributions of~$F$ given a~fixed past (starting from $-n-1<k$) converge almost everywhere to the unconditional distribution of this functional:
\begin{align*}
&\lim_n \operatorname{Prob}(F(x_0,x_{-1},\dots,x_{-k})\geqslant
t\mid x_{-n-1},\dots)\hspace*{3cm}\\
&\qquad\qquad\qquad\qquad= \operatorname{Prob}(F(x_0,x_{-1},\dots,x_{-k})\geqslant
t),\quad t \in \mathbb R;
\end{align*}
in other words, as $n$ tends to infinity, the conditional measures on~$k+1$ coordinates of the process weakly converge almost everywhere (or in measure) to the corresponding unconditional measures.} Here it is essential that~$k$ is fixed and $n\to \infty$. Can~$k$ be allowed to go to infinity along with~$n$? Literally, this does not make sense, since the limit of the conditional measures for growing~$k$ is not defined, but one may speak about the
\textit{conditional measures approaching one another as~$n$ goes to infinity.} For this, one should define some metric in the space of conditional measures for a~given~$k$ and require, for instance, that the expectation of the distances between the conditional measures with respect to different conditions converge to zero. Here the choice of a~metric plays a~crucial role, and the classes of processes satisfying such requirements depend substantially on this choice. But in any case, the validity of conditions of this type shows that the process is kind of similar to a~Bernoulli process.

Let us state the standardness criterion in terms of convergence of a~sequence of conditional measures
(i.\,e., measures on spaces of measured trees); it can be regarded as a~``martingale'' definition of standard random processes.

\begin{theorem}[{\rm(Martingale standardness criterion)}]
\label{th10}
Consider an arbitrary (not necessarily Markov) random process~$\{x_n\}_{n<0}$ with values in~$[0,1]$
(or in an arbitrary Borel space). Consider its tail filtration~${\mathfrak A}_n$, $n=0,1,2,\dots$,
where~${\mathfrak A}_n$ is the $\sigma$-algebra generated by the random variables~$x_k$,
$k\leqslant -n$, $n=0,1,2,\dots$\,. Assume that the filtration is ergodic,
i.\,e., the intersection of the $\sigma$-algebras satisfies the zero--one law:
$$
\bigcap_{n=0}^{\infty}{\mathfrak A}_n=\mathfrak N.
$$

Take either

-- an arbitrary cylinder function~$f$ on the space of trajectories of the process, or

-- an arbitrary metric~$\rho$ on the space of trajectories of the process.

Fix two trajectories~$C_1$,~$C_2$ of the process for time from~$-\infty$ to~$-n$ with isomorphic conditional measures~$\mu^{C_1}$,~$\mu^{C_2}$ on the finite segments
$$
(x_0,x_{-1},x_{-2},\dots,x_{-n}|C_1)\quad\text{and}\quad
(x_0,x_{-1},x_{-2},\dots,x_{-n}|C_2),
$$
i.\,e., two points of the quotient space $X/\xi_n$ lying in the same element of the partition~$\delta_n$ (see the standardness criterion, Theorems~\ref{th7}
and~\ref{th8}), and consider the trees of trajectories corresponding to the elements~$C_1$ and~$C_2$ endowed with the conditional measures on the leaves. Then the filtration is standard if and only if the following condition is satisfied:
$$
\lim_{n \to \infty}\,\int_{X/\xi_n}\,\int_{X/\xi_n}
d_f(\mu^{C_1},\mu^{C_2})\,d\mu_{\xi_n}\,d\mu_{\xi_n}=0
$$
(in the case of a~metric, one should replace~$d_f$ with~$d_{\rho}$) for all measurable functions~$f$  on the space of trajectories of the process (it suffices to check this only for cylinder functions with finitely many values), or for all admissible metrics (actually, it suffices to check this only for one metric).
\end{theorem}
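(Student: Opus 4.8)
The plan is to read Theorem~\ref{th10} as the translation of the already-proved general criterion (Theorem~\ref{th9}, resting on Theorems~\ref{th7} and~\ref{th8}) into the language of one-sided processes, and then to reduce its integral (``martingale'') form to the $\varepsilon$-form used there. First I would fix the dictionary: the past $\sigma$-algebra~${\mathfrak A}_n$ generated by $x_k$, $k\le -n$, is precisely the $\sigma$-algebra~${\mathfrak A}_{\xi_n}$ of the abstract filtration; an element $C\in\xi_n$ is the class of trajectories agreeing from time $-n$ on; and the conditional law~$\mu^{C}$ of the segment $(x_0,x_{-1},\dots,x_{-n}\mid C)$ is exactly the measured tree attached to~$C$. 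Two trajectories lie in the same block of~$\delta_n$ iff these trees are isomorphic. Under this dictionary the quantity to be controlled is the expected Kantorovich distance $d_f$ (or $d_\rho$) between two \emph{same-type} random elements of $\xi_n$, and the claim is that it tends to zero exactly when the filtration is standard.

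The core step is the equivalence of this convergence with the $\varepsilon$-form of Theorem~\ref{th9}, and here the correct normalization is essential: one must average $d_f(C_1,C_2)$ over pairs drawn from the \emph{conditional} measure $\mu_i$ on each type $A^n_i\in\delta_n$, weighting the types by $\mu_{\xi_n}(A^n_i)$. Writing $w_i=\iint d_f\,d\mu_i\,d\mu_i$, the hypothesis is $\sum_i \mu_{\xi_n}(A^n_i)\,w_i\to 0$. By the Markov inequality the total measure of the types with $w_i$ not small is itself small, so these may be discarded into the exceptional set; within each surviving type the conditional measure is a probability measure, so one is in the situation of the single-type (finitely Bernoulli) criterion of Theorem~\ref{th7}. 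There the triangle inequality finishes the job: choosing by averaging a ``center'' $C_0^i$ with $\int d_f(C_0^i,C)\,d\mu_i(C)$ small, the ball $D_i=\{C:d_f(C_0^i,C)<\delta\}$ has large conditional measure, and for $C_1,C_2\in D_i$ one gets $d_f(C_1,C_2)<2\delta$. The union of these balls over the good types is the set $D$ with $\mu_{\xi_n}(D)>1-\varepsilon$ and uniformly small within-type distances demanded in Theorem~\ref{th9}; the reverse implication, that the $\varepsilon$-form forces the average to vanish, is immediate from boundedness of $d_f$.

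With this reduction, equivalence with standardness is exactly Theorem~\ref{th9}, whose substantive ``if'' part (the inductive construction of coherent independent complements forming a basis, passing to quotient filtrations $\{\xi_{n+k}/\xi_k\}$) has been carried out; independence of the conclusion from the choice of $f$ or of the admissible metric~$\rho$ is supplied by Theorem~\ref{th8}. It remains to record the martingale reading: classical martingale convergence asserts, for \emph{fixed} $k$, that the conditional laws of a $(k{+}1)$-coordinate functional converge to the unconditional law, whereas the present condition lets $k$ grow with $n$ and replaces convergence to a fixed limit by the \emph{mutual} approach of two conditional laws in the Kantorovich metric---which is meaningful precisely because we only ever compare trees of one combinatorial type.

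I expect the main obstacle to be this normalization and the attendant two-level bookkeeping over types. The literal unnormalized integral $\iint d_f\,d\mu_{\xi_n}\,d\mu_{\xi_n}$ equals $\sum_i \mu_{\xi_n}(A^n_i)^2\,w_i$ and can vanish while a set of small types of non-negligible total measure carries large within-type distances, so it is \emph{not} by itself equivalent to standardness; the condition must therefore be read as convergence in conditional measure within each type, and verifying that the discarded types (both those of large $w_i$ and those too small to run the center argument) have total measure below~$\varepsilon$ is the delicate point---this is precisely the ``mixing of finitely Bernoulli components'' over the partition~$\delta_n$ alluded to before Theorem~\ref{th9}.
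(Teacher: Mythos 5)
Your proposal is correct and follows the route the paper intends: the paper gives no separate argument for Theorem~\ref{th10} at all, declaring it ``a verbatim reproduction of the standardness criterion'' and deferring entirely to Theorems~\ref{th7}--\ref{th9} (whose own proofs are only sketched, with details referred to~\cite{69}). What you add beyond the paper is the explicit bridge between the integral (``martingale'') form and the $\varepsilon$-form of Theorem~\ref{th9} --- the Markov-inequality step discarding types with large average within-type distance, and the triangle-inequality ``center'' argument producing the set~$D$ --- which the paper simply omits. Your closing observation is also a genuine and worthwhile point that the paper glosses over: the literal unnormalized double integral weights each $\delta_n$-type by the square of its measure, so it can tend to zero for spurious reasons when the types fragment, and the condition is only equivalent to Theorem~\ref{th9} if one reads it as averaging $d_f$ against the \emph{conditional} (normalized) measures within each type; the integrand is in any case only defined for pairs in the same $\delta_n$-block, since couplings of trees require isomorphic trees. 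This normalized reading is the one consistent with the rest of Section~\ref{sec5}, and making it explicit, as you do, repairs an imprecision in the statement rather than introducing a gap.
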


This statement is a~verbatim reproduction of the standardness criterion, but it leads to an important conclusion:
\textit{between the ordinary martingale convergence theorem, which holds for all processes determining an ergodic filtration (i.\,e., regular processes), on the one hand, and the standardness criterion, which has less applicability, on the other hand, there is a~natural spectrum of intermediate conditions on processes.} They are determined by the relation between the moment~$n<0$  that fixes a~condition in the past and the moment~$k(n)>n$ up to which we compare the conditional measures on trees of rank~$|k(n)|$. If~$k(n)$ does not depend on~$n$ and $n\to \infty$, this is the ordinary martingale convergence theorem, and if $k(n)=n$, this is the standardness criterion. Hence standardness is the greatest possible strengthening of the zero--one law. The author knows nothing on the intermediate cases, which can be called ``higher zero--one laws,'' but one can expect that processes for which the conditions hold for different intermediate growth rates of the sequence~$k(n)$ have different properties.

The most interesting part of the theory is the study of filtrations that are finitely Bernoulli but not isomorphic to Bernoulli filtrations. Deviations from the conditions of the standardness criterion can be measured by various invariants, for instance, the entropy of filtrations.
We will return to this problem in Sec.~\ref{sec6}.

The simplest example of an inhomogeneous finitely Bernoulli nonstandard filtration is the Markov chain with two states and the transition matrix
$$
\begin{pmatrix}
p & q
\\
q & p
\end{pmatrix},\qquad
p \ne q,\quad
p+q=1,\quad
p,q>0
$$
(see~\cite{82}); the cylinder set $\{x=\{x_n\}:x_1=0\}$ does not satisfy the standardness criterion. This filtration is a~double cover of the Bernoulli filtration with probabilities~$(p,q)$.

Conversely, the Fibonacci Markov chain with the transition matrix
$$
\begin{pmatrix}
\lambda^2 & \lambda \\
1 & 0 \\
\end{pmatrix},\qquad
\lambda^2+\lambda=1,
$$
which is not locally Bernoulli, determines a~standard homogeneous tail filtration, since the partitions~$\delta_n$, $n=1,2,\dots$, are the partitions into singletons of each level of the graph.

\subsection{The spaces of filtrations, Markov compacta, and grad\-ed multigraphs}
\label{ssec5.6}
The theorem on the Markov property of filtrations allows one to take  the space of equipped graded multigraphs, or, equivalently, the space of Markov compacta with Markov measures, as the space of all locally finite filtrations in a~Lebesgue space. In other words, we give the following definition.

\begin{definition}
\label{def8}
We identify the space of locally finite filtrations in a~continuous Lebesgue space with the space of equipped graded multigraphs (respectively, with the space of Markov chains with cotransition probabilities) endowed with a~fixed continuous measure that agrees with the equipment (respectively, with the cotransitions).
\end{definition}

With this definition, a~filtration is identified with the tail filtration of the path space of a~multigraph (respectively, with the space of trajectories of a~Mar\-kov chain). We emphasize that an important role in the convention to identify the collection of filtrations in measure spaces and the collection of equipped graded multigraphs  is played by the measure on the path space that agrees with the equipment: the equipped graph alone does not determine, for instance, whether the filtration is ergodic, this is determined by the measure.

For definiteness, we will speak about multigraphs; reproducing all statements in terms of Markov chains brings nothing new.

As we have seen, an equipped multigraph can be defined by a~sequence of finite matrices whose nonzero entries are the probabilities of cotransitions of adjacent vertices. Each such sequence determines an equipped  graded multigraph~$\Gamma$. The path space of~$\Gamma$ will be denoted by~$T(\Gamma)$. Besides, given a~multigraph~$\Gamma$, we introduce the simplex~$\Sigma_{\Gamma}$ of all probability measures on the space~$T(\Gamma)$ that agree with the equipment.

\textit{Denote by~$\mathfrak F$ the space of all pairs $(\Gamma,\Sigma_{\Gamma})$ where~$\Gamma$ runs over the set of all infinite equipped graded locally finite multigraphs and $\Sigma_{\Gamma}$ are the corresponding simplices of measures.} This space can also be viewed as the space of all locally finite filtrations of a~continuous Lebesgue space, and simultaneously as the space of all locally finite Markov compacta with Markov measures. We endow this space with the weak topology,
i.\,e., the topology of an inverse spectrum.

We may consider the following important subspaces of this space.

1. \textit{The subspace
of ergodic multigraphs}, i.\,e., multigraphs for which there exists at least one continuous ergodic measure on the path space that agrees with the equipment. This class does not contain, for example, trees and other decomposable multigraphs. It is of most interest for the needs of the theory of filtrations and ergodic theory. The combinatorial structure of such multigraphs must be completely described. In fact, this is a~question about hyperfinite equivalence relations on a~standard Borel space (the path space) with a~fixed cocycle of measures (cf.~\cite{6},~\cite{78},~\cite{52}).

2. \textit{The subspace
 of multigraphs with canonical equipments}
(the tail filtration in this case is homogeneous, i.\,e., the conditional measures on the elements~$\xi_n$, $n=1,2,\dots$, are uniform). The geometry of the projective limit of simplices and the limiting simplex of central measures ${\mathfrak F}_0$ are quite special; in the dyadic case, the limiting simplex is the simplex of unordered pairs, or the so-called ``tower of dyadic measures'' (see Fig.~\ref{fig11}
in Sec.~\ref{ssec7.3}). In the language of Markov chains (in the stationary case), central measures are measures of maximal entropy. The canonical equipment is determined by the very structure of the graph, which is why this space is of special interest.

3. \textit{The space~${\mathfrak F}^{\rm st}$
 of stationary filtrations}
(i.\,e., filtrations invariant under the shift, which is the passage to the quotient by the first partition:
$\tau\equiv \{\xi_n\}_{n=0}^{\infty}\simeq
\{\xi_n/\xi_1\}_{n=1}^{\infty}\equiv \tau'$), or, equivalently,
\textit{the space
 of stationary multigraphs} (i.\,e., multigraphs with constant cotransition matrices) with an additional symmetry, etc. The analysis of this space is of special importance for ergodic theory (of one automorhism).

4. Besides, in all cases one can give up considering measures on path spaces and pass to the Borel theory of graphs and filtrations. This means that we study only the space~$\mathscr G$, without fixing the second component in pairs $(\Gamma,\Sigma_{\Gamma})$,
i.\,e., a~simplex of measures~$\Sigma_{\Gamma}$, and consider only cotransitions.
(See Sec.~\ref{sec7}.)

It is more important to analyze the space~${\mathfrak F}$ from the point of view of the problem of finite and general isomorphism. Let us select in~${\mathfrak F}$ filtrations corresponding to minimal multigraphs (minimal Markov chains) with an equipment and a~measure on the path space. By the previous analysis, this is a~subspace of~${\mathfrak F}$ consisting of the minimal locally finite filtrations.

By Theorem~\ref{sec5}, this is the
\textit{space of complete invariants of filtrations up to finite isomorphism: no two different standard filtrations are finitely isomorphic, and any filtration is finitely isomorphic to a~standard one}.

Our definitions immediately imply the following assertions.

\begin{proposition}
\label{pr6}
1) The space of all equipped graded multigraphs (respectively, the space of all Markov compacta with cotransitions) is fibered over the space of minimal equipped graphs (respectively, over the space of minimal Markov compacta with cotransitions). Thus the space of all filtrations is fibered over the space of minimal filtrations. The space of minimal equipped graphs is dense in the space of filtrations.\footnote{At the same time, in the space, say, of dyadic filtrations there is only one minimal ergodic filtration and only one minimal graph. In general, one can introduce a~more complicated topology in which the set of minimal filtrations is closed.}

2) The set of standard ergodic filtrations is dense in the space of all filtrations.
\end{proposition}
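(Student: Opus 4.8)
The plan is to read both assertions off the minimal-model construction of Section~\ref{ssec5.1} together with the identification of Definition~\ref{def8}. First I would make explicit the map underlying the fibration. The construction of Section~\ref{ssec5.1} assigns to each equipped graded multigraph carrying an agreeing measure its minimal model, and as already noted there, this assignment fixes minimal graphs (the minimal model of a minimal multigraph is itself) and hence is a projection $\pi\colon\mathfrak F\to\mathfrak F_{\min}$ onto the subspace of minimal filtrations. By Proposition~\ref{pr4}(2) the point $\pi(\tau)$ is a complete invariant of $\tau$ up to finite isomorphism, so the fibre $\pi^{-1}(\pi(\tau))$ is exactly the finite-isomorphism class of $\tau$. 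This already exhibits $\mathfrak F$ as fibred over $\mathfrak F_{\min}$, and the same statement holds verbatim for the path spaces of multigraphs and for Markov compacta, these being merely alternative languages for the same objects (Definition~\ref{def8}).

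The density of $\mathfrak F_{\min}$ in $\mathfrak F$ I would deduce from the fact that the weak topology is the topology of the inverse spectrum of finite truncations: the $n$-th stage records the length-$n$ fragment of a filtration, whose isomorphism class is completely described by the measured-tree invariant $\omega$ on $\operatorname{Tree}_{n+1}$ (Theorem~\ref{th4}). Since $\pi(\tau)$ is finitely isomorphic to $\tau$, it has \emph{the same} invariant at every stage, so $\pi(\tau)$ and $\tau$ project to the same point of every stage and are not separated by the weak topology; hence $\pi(\tau)$ lies in every neighbourhood of $\tau$, and $\mathfrak F_{\min}$ is dense. This is the precise content of the informal remark in Section~\ref{ssec5.1} that any filtration can be approximated arbitrarily closely by minimal ones, and it is why the two clauses of part~1) are ``immediate from the definitions.''

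For part~2) it suffices, by the same reduction, to place a standard ergodic filtration inside the finite-isomorphism class determined by an arbitrary prescribed invariant $\omega$ on $\operatorname{Tree}_{N+1}$. I would realise $\omega$ as the level-$N$ fragment of a concrete infinite filtration $\sigma$ obtained by continuing that fragment below level $N$ with an ergodic Bernoulli tail: over each leaf-type one adjoins a sequence of independent partitions (for instance a dyadic Glimm continuation, cf.\ Example~\ref{ex1}), chosen so that distinct continuations receive non-isomorphic trees. The truncation of $\sigma$ at level $N$ then reproduces $\omega$, so $\sigma$ is finitely isomorphic to the given filtration and therefore lies in its neighbourhood. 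Ergodicity of $\sigma$ follows from Kolmogorov's zero--one law applied to the Bernoulli tail, and standardness follows from the standardness criterion (Theorems~\ref{th6} and~\ref{th9}), whose condition is a statement about the limit $n\to\infty$ and is thus governed entirely by the Bernoulli tail; invariance of the standardness class under isomorphism is Proposition~\ref{pr5}(1).

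The main obstacle is exactly this last verification, together with the construction of the continuation: one must extend an \emph{arbitrary} finite measured-tree fragment to an infinite, locally finite, ergodic filtration whose deep quotient filtrations $\{\xi_{n+k}/\xi_n\}_{k\ge 0}$ are Bernoulli, and then confirm that the Kantorovich form of the criterion (the vanishing of $\int\!\int d^n_f\,d\mu_{\xi_n}\,d\mu_{\xi_n}$ as $n\to\infty$) really holds for $\sigma$. Granting the criterion's insensitivity to any fixed finite initial fragment --- which is precisely the ``only if'' content of Proposition~\ref{pr5}(2) combined with the passage to quotient filtrations used in the proof of Theorem~\ref{th7} --- the construction goes through, and both density statements follow.
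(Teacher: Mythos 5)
The paper offers no argument for this proposition beyond the remark that it is immediate from the definitions, so there is nothing to compare line by line; your part~1) is a faithful unpacking of what is intended: the minimal-model construction of Sec.~\ref{ssec5.1} is a retraction of $\mathfrak F$ onto the minimal graphs whose fibres are the finite-isomorphism classes (Proposition~\ref{pr4}), and since the weak topology only records finite fragments, a filtration and its minimal model are not separated, which gives density. That is consistent with the footnote to the proposition and with the remark at the end of Sec.~\ref{ssec5.1}.

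In part~2), however, your construction of the extension has a genuine gap at the ergodicity step. You propose to continue the prescribed fragment $\omega$ ``over each leaf-type'' by adjoining independent partitions, ``chosen so that distinct continuations receive non-isomorphic trees,'' and to get ergodicity from Kolmogorov's zero--one law ``applied to the Bernoulli tail.'' But if the continuation is performed separately over each type of element of $\xi_N$ (each level-$N$ vertex of the minimal graph) and the continuations over distinct types remain distinguishable, then the finite partition of $X/\xi_N$ into types is measurable with respect to every $\mathfrak A_n$, $n\geqslant N$, hence with respect to $\bigcap_n\mathfrak A_n$: the zero--one law applied fibrewise over each type only trivializes the tail \emph{within} a fibre and cannot kill this nontrivial invariant. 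The filtration you build is then not ergodic, hence not standard in the sense of Definition~\ref{def7}. The repair is to \emph{merge} the types: for instance, put a single vertex $w$ at level $N+1$, joined to each level-$N$ vertex $v$ by edges whose cotransition probabilities reproduce $\mu_{\xi_N}$, and continue above $N+1$ with a Glimm-type column (one vertex per level, multiple edges). The resulting graph is minimal (each level beyond $N$ has a single vertex, and levels $\leqslant N$ were already minimal), its path-space measure is ergodic by the zero--one law for the genuine Bernoulli tail, and its level-$\leqslant N$ fragment is $\omega$. This also shortcuts the part of your argument you flag as the ``main obstacle'': there is no need to verify the Kantorovich form of the criterion for the deep quotients, since minimality plus ergodicity already yields standardness by Definition~\ref{def7} and Proposition~\ref{pr5}.
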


Basically, the space of all minimal multigraphs (Markov chains) is the space of all locally finite types of filtrations.

Thus, in contrast to the problem of finite isomorphism, the problem of isomorphism of locally finite filtrations is wild: passing to the quotient of~${\mathfrak F}$ by the classes of isomorphic filtrations does not result in a~space with a~separable Borel structure. Moreover, the latter remains true even if we consider the problem of isomorphism of ergodic dyadic filtrations, i.\,e., in one (in fact, an arbitrary nontrivial) class of finite isomorphism. The situation is similar to many isomorphism problems in algebra and analysis, e.\,g., the problem of isomorphism of actions of infinite groups with invariant measure, or the problem of classification of irreducible unitary representations of the infinite symmetric group, etc. This question arises in the theory of dynamical systems, representation theory, asymptotic combinatorics, etc. The wildness of the classification problem follows from the fact that the classification includes a~typical wild problem;  we will return to this in another paper.

In what follows, we, on the one hand, propose to consider the isomorphism problem for some classes of filtrations close to standard ones and, on the other hand, suggest types of invariants for general filtrations substantially different from standard ones.

\section{Nonstandardness: examples and a~sketch of a~general theory}
\label{sec6}

The most difficult problems of the theory of filtrations are related to nonstandard filtrations. But, as follows from the standardness criterion, nonstandardness for inhomogeneous filtrations can have different causes. Recall that a~minimal filtration is uniquely determined by its finite type. The standardness of a~filtration means that it does not differ much from its finite type; roughly speaking, its homogeneous parts must be standard, i.\,e., Bernoulli.

Thus the nature of true nonstandardness shows itself already in homogeneous (and, moreover, even dyadic) filtrations. As we have already mentioned, the group of symmetries of the filtration itself and its quotients by the partitions~$\xi_n$ with arbitrarily large~$n$ must be infinite, otherwise the filtration cannot have a~complicated structure at infinity.

In this section we continue constructing Markov chains and graphs for nonstandard dyadic filtrations. Actually, these chains with finite state spaces, i.\,e., path spaces of graded multigraphs, should be regarded as an approximation or, more exactly, a~simple model of continual, as well as more complicated, Markov chains. But perhaps the most instructive thing in these constructions is that the graphs obtained from these approximations are interesting in themselves and, possibly, of even more importance than the approximation. This fact should be emphasized, because these graded graphs, regarded as Bratteli diagrams, represent important $C^*$-algebras which seem to have never been studied in the literature. For example, the algebras generated by the graph of words for the Abelian group~${\mathbb Z}^d$ (see below) look mysterious.

The method of proving nonstandardness contained in the criterion becomes clearer if one endows it with a~combinatorial meaning. This is exactly what we do below in several examples. In fact, the problem is reduced to the analysis of a~measure on the orbits of an action of the group of automorphisms of a~tree
on some subsets of configurations. The most transparent case is the action of this group on the set of vertices of the cube~$2^{2^{n}}$ with the uniform measure or with an arbitrary product measure with equal factors. The standardness criterion requires that the measure concentrates (with respect to the Hamming metric) in a~neighborhood of one orbit, and we need advanced methods of proving this fact. Unfortunately, such a~clarity is not yet achieved in the examples given below.

\subsection{Graphs of random walks over orbits of groups, graphs of words}
\label{ssec6.1}
The construction of a~Markov approximation, whose existence follows from Theorem~\ref{th5}, as a~Markov chain with finite state spaces is a~rich source of new graded graphs. On the other hand, these constructions allow one to study filtrations given a~priori as Markov filtrations with continual state spaces. Specializing the proof of Theorem~\ref{th5}, we will give an explicit construction of a~Markov chain with finite state spaces and obtain a~``graphic'' interpretation of the filtration. We mean models of random walks over the orbits of an action of a~countable group in a~space with continuous invariant measure and the tail filtration of the corresponding Markov chain. This subject is sometimes called the theory of
\textit{random walks in random scenery} (RWRS), and it is a~further step in the theory of random walks on groups,  which is currently the subject of hundreds of works. It should be noted that this relatively new field deals with problems of different nature: for instance, problems related to boundaries of walks, entropy, etc.\ (see, e.\,g., the circle of problems in~\cite{26}) are replaced with problems about the relation between the action of the group
and properties of filtrations (see~\cite{77}).

The question of when the corresponding filtrations (even dyadic ones) are standard is not sufficiently studied. It is only known that the filtration of the Markov process~$(T,T^{-1})$, that is, the random walk over the orbits of an automorphism~$T$, is not standard if the entropy is positive,
 $h(T)>0$ (see~\cite{22}).

For automorphisms~$T$ with discrete spectrum, this filtration is presumably standard. For the rotation~$T_{\lambda}$ of the circle, this is proved by the author
(unpublished) for well-approximable~$\lambda$, and by W.~Parry (unpublished)  in full generality.

But these are only first examples of random walks over orbits of actions; nonsimple walks  (i.\,e., walks not only over generators) offer much more possibilities. One can hope that the machinery of graphs will provide new tools for investigating such problems.

%many corrections in this fragment
We begin the construction with the following general example. Let  $G$ be a~group and
$S$, with $|S|=s<\infty$, be a set of generators of~$G$; consider the set~$\mathbf{k}^{G}$ where
$$
\mathbf{k}=\{0,1,\dots,k-1\};
$$
let $m$ be a~Bernoulli measure on~$\mathbf{k}^{G}$, i.\,e., the direct product $\prod_G p$, where
$$
\{p_g\},\qquad
g\in S,\quad
\sum_g p_g=1, \quad
p_g\geqslant 0,
$$
is a~probability vector of dimension~$s$.
%many corrections in this fragment
Assume that there is a~free action of~$G$  on a~Lebesgue space $(X,\mu)$ with continuous invariant measure. Consider the (one-sid\-ed) Markov process~$(y_n)_{n\leqslant 0}$ with the state space~$X$ and probabilities of transitions (since we consider negative moments of time) equal to the probabilities of cotransitions:
$$
\operatorname{Prob}(y|x)=\begin{cases}
p_g& \text{if} \ y=T_gx, \ g\in S,
\\
0 & \text{otherwise};
\end{cases}
$$
the measure~$\mu$ is an invariant initial measure of the Markov process; we also have the Markov measure~$M$ defined on the space~$\mathscr X$ of all trajectories of the process.

Consider  the tail filtration
$\tau=\{\mathfrak A\}_{n=0}^{\infty}$ on the space $(\mathscr X,M)$ determined by this stationary Markov process. It is, obviously, locally finite (since~$S$ is finite), but the Markov process has a~continual state space. However, by Theorem~\ref{th5}, the corresponding filtration can be defined as the tail filtration in the path space of some graded graph, or as the tail filtration of a~Markov chain with \textit{finite state spaces}. Certainly, this Markov chain will no longer be stationary, for instance, since the cardinality of state spaces, i.\,e., the number of vertices on levels of the graph, grows. But, surely, the filtration itself remains stationary, only the approximation is not shift-invariant. Of course, this chain is not unique, and, as was already mentioned, in general there is no canonical choice of such a~chain.

We will give only a~typical  example of constructing such a~graph.

Let $B_n$ be the set of all elements of the group~$G$ that can be written as words of length at most~$n$ in the generators~$S$; here $B_1=S$.
Observe that
$$
gB_{n-1}\subset B_n\quad\text{if }
g \in S.
$$
We introduce the following series of graded graphs $\Gamma_{G,S}\equiv \Gamma$.

The set~$\Gamma_k$ of vertices of the $k$th level of the graph~$\Gamma$, $k\geqslant 1$,
consists of all functions on the ball~$B_k$ with values in~$\mathbf{k}$;
an edge joins a~vertex $v\in \Gamma_k$ with a~vertex $u\in \Gamma_{k-1}$ if~$u$, regarded as a~function on~$B_{k-1}$, is the restriction of~$v$, regarded as a~function on~$B_k$, to $B_{k-1} \subset B_k$. An equipment on the edge $(u,v)$ is defined in this case as the product of the numbers~$p_g$ corresponding to the product of generators completing the word~$u$ to the word~$v$.

Thus we have constructed an equipped graded graph~$\Gamma_{G,S}$. We will call it the
 \textit{graph of words of the given group with generators $(G,S)$}. This construction admits useful generalizations, e.\,g., one can consider the graph of words corresponding to the sequence of balls~$B_{n_k}$, where
$\{n_k\}$ is a~sequence of positive integers going to infinity.

We define a~homomorphism of the space~$\mathscr X$ onto the path space~$T(\Gamma_{G,S})$ of the constructed graph as follows: a~trajectory of the process is a~sequence of configurations~$\{\phi_n\}_{n<0}$ on the group. Consider the map
$$
\Phi\colon \{\phi_n\}_n \to \{\phi_n|B_n\}_n.
$$

\begin{theorem}
\label{th11}
1) The image of the space~$\mathscr X$ under the homomorphism~$\Phi$ is the path space of the graph of words~$\Gamma_{G,S}$.

2) The homomorphism~$\Phi$ agrees with the equipment,
i.\,e., the parameters of the Markov chain (transition probabilities) coincide with the cotransition probabilities on the path space. Hence~$\Phi$ maps the measure~$M$ to a~measure on the path space that agrees with the equipment.

3) The homomorphism~$\Phi$ of the space~$\mathscr X$ to the path space~$T(\Gamma_{G,S})$ defines a~homomorphism of the space of configurations,
i.\,e., trajectories of the Markov chain, onto the path space of the graph, and the filtration is homomorphically and measure-preservingly mapped onto the tail filtration of the path space.
\end{theorem}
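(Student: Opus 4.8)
The plan is to recognize Theorem~\ref{th11} as the explicit specialization, to the stationary random walk in random scenery, of the general Markov-realization scheme of Theorem~\ref{th5} and of the minimal-model construction of Section~\ref{ssec5.1}, and then to verify by hand that the graph of words $\Gamma_{G,S}$ together with the restriction map $\Phi$ implements that scheme. For the purposes of this theorem I regard the states of the process as configurations $\phi_n\in\mathbf{k}^{G}$ carrying the Bernoulli measure and the shift action of $G$, so that a trajectory is a sequence $\{\phi_n\}_{n<0}$ in which $\phi_{n-1}$ arises from $\phi_n$ by a shift $T_g$ along a single generator $g\in S$, with cotransition weight $p_g$. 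The one structural fact driving everything is the inclusion $gB_{n-1}\subset B_n$ for $g\in S$, already recorded before the statement: a one-generator shift carries the ball of radius $n-1$ into the ball of radius $n$.

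For part~1 I would first show that $\Phi$ lands in the path space. Reading one cotransition step through the restriction maps, the value of $\phi_{n-1}$ on $B_{n-1}$ is determined by the value of $\phi_n$ on $B_n$ precisely because shifting by a generator sends $B_{n-1}$ into $B_n$; hence $\{\phi_n|_{B_n}\}_n$ is a coherent family of vertices joined by edges of $\Gamma_{G,S}$, i.e.\ an infinite path. Surjectivity onto $T(\Gamma_{G,S})$ is then the remark that, since $S$ generates $G$ we have $\bigcup_n B_n=G$ and hence $\mathbf{k}^{G}=\varprojlim_n \mathbf{k}^{B_n}$; so every path, being a compatible system of functions on the exhausting balls, extends to a genuine configuration in $\mathbf{k}^{G}$, and one exhibits a trajectory of the process whose $\Phi$-image is the given path.

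For part~2 I would compute the cotransition probability that $\Phi_\ast M$ assigns to an edge $(u,v)$ of $\Gamma_{G,S}$ and match it with the equipment. Since the process is Markov with one-step weights $p_g$ and $\Phi$ is built from the equivariant restriction maps, the conditional probability of descending from $v\in\Gamma_n$ to $u\in\Gamma_{n-1}$ is exactly the product of the $p_g$ over the generators realizing the corresponding shift, which is the weight attached to that edge in the construction of $\Gamma_{G,S}$. Thus $\Phi_\ast M$ agrees with the equipment, which is part~2. Part~3 then only packages parts~1 and~2: $\Phi$ is measurable and, by part~2, measure-preserving onto its image; and it intertwines the two tail filtrations because the initial segment of a path up to level $n$ records exactly the recent configurations $\phi_{-1},\dots,\phi_{-n}$ through their ball-restrictions, so the graph's level-$n$ tail $\sigma$-algebra pulls back under $\Phi$ to the level-$n$ tail $\sigma$-algebra $\mathfrak{A}_n$ of the process. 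Hence $\Phi$ carries $\tau$ homomorphically and measure-preservingly onto the tail filtration of $T(\Gamma_{G,S})$.

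The main obstacle is the coherence claim inside part~1: one must fix the conventions for the shift action and for the ball filtration so that the dynamical one-step map, expressed on ball-restrictions, coincides on the nose with the combinatorial restriction (edge) maps of $\Gamma_{G,S}$; this is exactly where the group geometry $gB_{n-1}\subset B_n$ must mesh with the inverse-limit structure $\mathbf{k}^{G}=\varprojlim_n\mathbf{k}^{B_n}$. A secondary but genuine point is that $\Phi$ is only a homomorphism, not an isomorphism: restricting the time-$n$ configuration to $B_n$ discards the scenery outside the visited region, so one must check that what is discarded is measurably negligible for the tail filtration, rather than attempting, as in Theorem~\ref{th5}, to arrange an honest isomorphism.
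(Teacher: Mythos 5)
Your proposal is correct and follows the same route as the paper, whose entire proof of Theorem~\ref{th11} is the one line ``Follows immediately from our definition of the graph and its structure''; you are simply writing out the verification that the paper leaves implicit, including the key point that $gB_{n-1}\subset B_n$ makes the dynamical one-step map descend to the edge maps of $\Gamma_{G,S}$ and that the edge weights reproduce the $p_g$. Your closing caution is also consistent with the paper, which immediately after the theorem notes that whether $\Phi$ is an isomorphism depends on whether the shifted balls cover the group; note only that part~3 claims a homomorphism, not an isomorphism, so no negligibility of the discarded scenery actually needs to be checked.
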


\begin{proof}
Follows immediately from our definition of the graph and its structure.
\end{proof}

The answer to the important question of  \textit{when this homomorphism is an isomorphism} depends on the group, on the choice of a~sequence~$\{n_k\}_k$, and on the transition probabilities, more exactly, on whether the shifts of the balls in the random walk cover the whole group. In the general case, one should increase the size of the balls, which dictates the choice of the sequence~$\{n_k\}_k$. We will return to this question later, and now, in the upcoming sections, consider concrete examples.

It is useful to observe that this construction can also be used for semigroups; for instance, one can take~$S$ to be a set of generators without inverses. The resulting graph corresponds to the filtration of the process constructed from the group, but  it is simpler, and it sometimes suffices to analyze this simpler graph in order
to study  the process itself.

Below we give examples of applying this scheme.

\subsection{Walks over orbits of a~free group}
\label{ssec6.2}
Historically the first example of a~nonstandard dyadic filtration~\cite{59} involved the free group
$G=F_2$ with two generators $(a,b)$
and the space $X=2^{F_2}$ endowed with a~Bernoulli measure (the infinite product of the measures
 $(1/2,1/2)$). We considered the random walk over the trajectories (orbits)
 of the Bernoulli action of the free semigroup with two generators, i.\,e., the Markov process with the transitions $y(g)\to y(ag)$ and $y(g)\to y(bg)$
having the probabilities $(1/2,1/2)$.

Let us cite the paper~\cite{59}: ``\textit{The constructed Markov process has the following paradoxical property, which can be loosely stated as follows. Assume that the process of development of mathematics is the same as the constructed Markov process. If each year,  starting from~$-\infty$, \textit{Abstract Journal of Mathematics}\footnote{Which, unfortunately, has not survived to modern times.}  publishes a~volume containing all essentially new results (i.\,e., results independent of the previous ones) discovered in this year, then reading all these volumes does not allow one to fully recover the picture of mathematics for each given year. Of course, it is understood that in year~$-\infty$ there were no discoveries.}''

Let us add that the above is true for any way of writing the volumes,  i.\,e., for any choice of an independent complement to the previous volumes. In other words, there are essential events
(i.\,e., events of positive measure) occurred in the given year that are not measurable with respect to any system of independent complements to the partitions of this dyadic filtration.

The construction of the graph
described above for the general case determines an approximation of this process by finite Markov chains. One of the corresponding multigraphs looks as follows: the number of vertices of level~$n$ is equal to~$2^{2^n}$, which corresponds to the set of all functions on all words of length~$n$ with values~$0$,~$1$. Every function-vertex~$v$ is joined by an edge with two functions-vertices~$u$ and~$u'$ of the previous level if~$u$ (respectively,~$u'$) is the function on words of length~$n-1$ obtained from~$v$ by considering it on the indices beginning as~$(0,*,*,\dots,*)$ (respectively,~$(1,*,*,\dots,*)$). But since functions are arbitrary, this means that the set of vertices of the $n$th level is the set of all pairs of vertices of the $(n-1)$th level. In other words, the graph of words
in this case is the
\textit{graph of ordered pairs}~\cite{82},~\cite{91}
(see Fig.~\ref{fig9}).\footnote{The possibility to use this graph in the study of  random walks on free semigroups and the corresponding filtrations was not observed in~\cite{91}; this link will be considered in another paper.} From here it is not difficult to deduce that the corresponding filtration is nonstandard (see below) and to obtain a~number of other properties.

\begin{landscape}

\begin{figure}[h]

\vspace*{2cm}

\hspace*{-12cm}

\centering
 \includegraphics[scale=0.9]{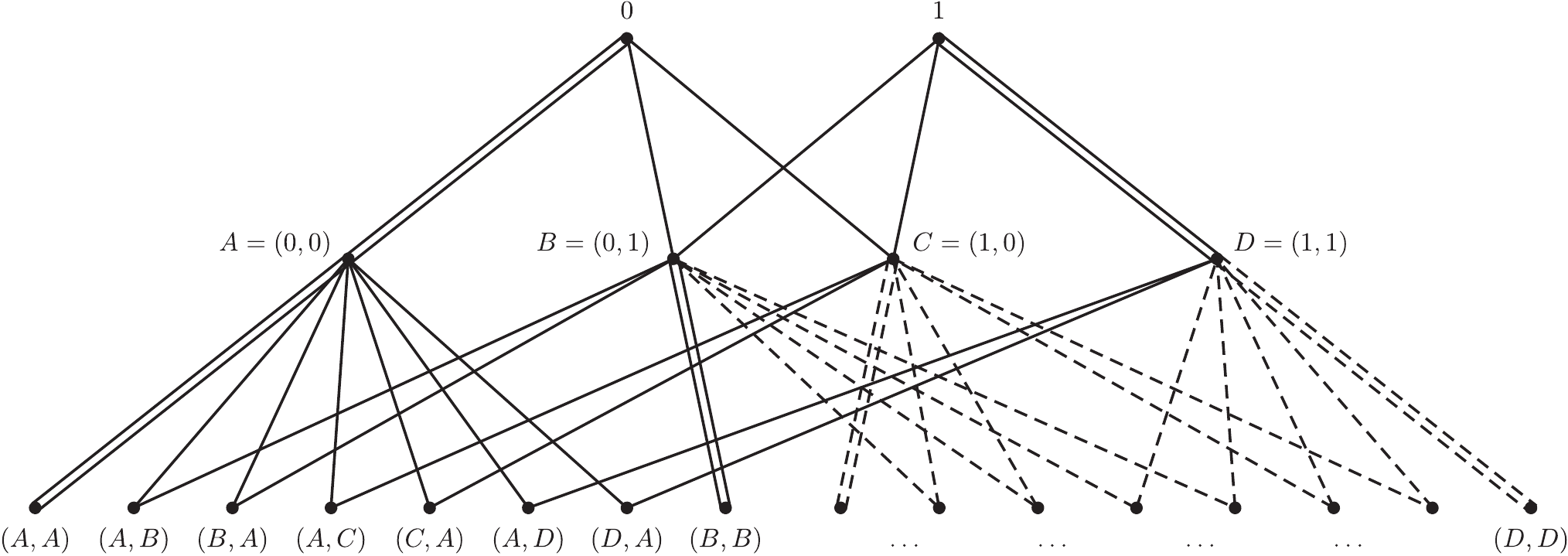}

\vspace*{3mm}

\caption{The graph of ordered pairs}
\label{fig9}

\end{figure}

\end{landscape}

Exactly the same numerical analysis was used in~\cite{59} to prove the nonstardardness of filtrations arising in the study of actions of locally finite groups, e.\,g., a~Bernoulli action of the group of roots of unity of orders of the form~$2^n$, or a~direct sum of groups of order~$2$. The main argument proving the nonstandardness in this case was that the orders of orbits of the groups of automorphisms of the dyadic tree in these examples was substantially smaller than the order sufficient for the measure to concentrate near one orbit. This argument was used by the author to introduce the notion of the entropy of a~dyadic filtration. After the author's paper~\cite{59} was published, where nonstandard dyadic filtrations and the standardness criterion appeared for the first time and the notion of entropy of dyadic filtrations was defined (see~\cite{69}), A.\,M.~Stepin observed that in examples related to actions of locally finite groups (like the infinite sum of groups of order~$2$), one can also use the entropy of the action as an invariant of a~trajectory dyadic sequence. In~\cite{64},~\cite{65},~\cite{69}, an estimate on the growth of the orders of groups is given for which the entropy of the action coincides with the entropy of the filtration. Further research~\cite{22} showed that this estimate is sharp. However, the entropy of filtrations in the sense of~\cite{62},~\cite{69} is of a~much more general nature.

Let us also add that the tree of paths~$T_u$ with root at a~given vertex~$u$ of level~$n$ together with the sequence of zeros and ones on the leaves of the tree saying at which of the two vertices~$a$,~$b$ of the first level the given path ends, is exactly what in~\cite{59} was called the
\textit{universal projection} of the cylinder set~$\xi_a$
or~$\xi_b$ of vertices of the first level to the initial fragment of the filtration,
i.\,e., to the first $n$ partitions. The fact that the universal projection does not stabilize, i.\,e., that for different vertices~$u$ of level~$n$ the  trees do not approach one another in measure, means, according to the standardness criterion, exactly that the filtration is nonstandard.

In conclusion of this example, note that although (by the theorem on a~Mar\-kov realization) filtrations generated by different RWRS actions can be represented as filtrations of Markov chains with finitely many states, an explicit form of this realization is known only in a~small number of examples. This is also true for adic realizations of the corresponding groups and subgroups. Such a~realization is not even known  for all Abelian groups, which is discussed in the next section.

\subsection{Walks over orbits of free Abelian groups}
\label{ssec6.3}
Walks over orbits of infinite Abelian groups provide
an example of a~filtration that is simpler in one respect and more complex in another. We will also consider a~model example.

If $G={\mathbb Z}^d$ and $S=(e_1,e_2,\dots,e_d)$, where $e_i$ are the unit coordinate vectors, then the corresponding graph looks as follows: the set of vertices of  level~$n$ is the set of all functions on the integer simplex
$$
\Sigma_d^n=\biggl\{\{r_1,r_2,\dots,r_d\}\colon \sum_i r_i=n,\
r_i\in {\mathbb Z}_+\biggr\}
$$
with values~0 or~1, and an edge joins a~vertex-function~$u$ with the vertices-functions of the previous level that are the restrictions of~$u$ to the subsimplices obtained by removing one of the faces.

Consider the case $d=1$, $G=\mathbb Z$,  $S=\{-1,+1\}$; the corresponding graph is the
\textit{graph of words for~$\mathbb Z$}. The vertices of the $n$th level are the words of length~$n$:
$$
(\epsilon_1,\epsilon_2,\dots,\epsilon_{n-1},\epsilon_n),\qquad
\epsilon \in 0,1,
$$
and the edges lead to the following two subwords:
$$
(\epsilon_1,\epsilon_2,\dots,\epsilon_{n-1}),\quad
(\epsilon_2,\dots,\epsilon_{n-1},\epsilon_n),\qquad
n=2,3,\dots\,.
$$

The graph of words for~$\mathbb Z$ is an Abelian analog of the graph of ordered pairs. This is a~dyadic (multi)graph, the $n$th level  has~$2^n$ vertices, each having two predecessors and four successors (see Fig.~\ref{fig10}). The~natural central measure is the uniform measure on the vertices of each level.

\begin{figure}[h!]

\centering
 \includegraphics{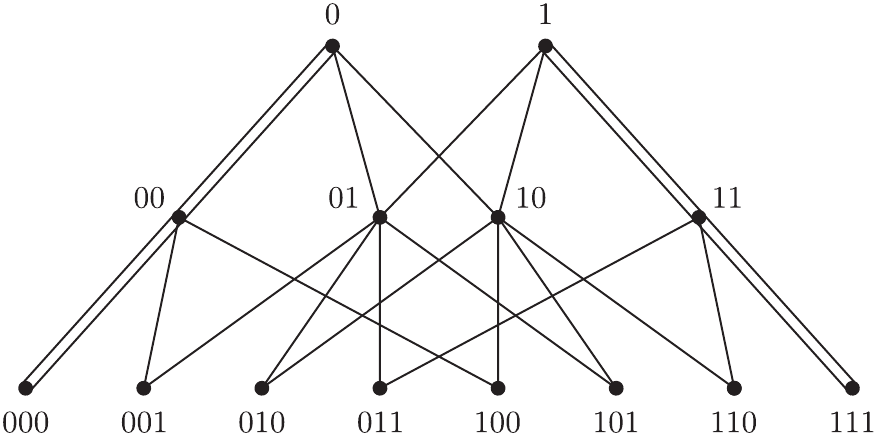}
\caption{The graph of binary words for the group $\mathbb{Z}$}
\label{fig10}
\end{figure}

The adic shift on this graph is quite interesting, it somewhat resembles the
$(T,T^{-1})$-transformation (where $T$ is a~Bernoulli automorphism). In the well-known paper~\cite{27}, it is proved that this transformation is not Bernoulli and even not weakly Bernoulli. But here we are interested in the properties of the filtration; the relation of these properties to ergodic characteristics of transformations is not yet sufficiently studied. The nonstandardness of the filtration for $(T,T^{-1})$ was conjectured by the author and first proved in~\cite{22} (see also~\cite{84}).
An extension of the results of~\cite{27} to the groups~${\mathbb Z}^d$ was obtained in~\cite{4}. The nonstandardness of the filtrations for all free Abelian groups with $d \geqslant 1$ and for nilpotent groups was proved in~\cite{84}, as well as the fact that the scaled entropy of these filtrations depends on~$d$; we will return to these graphs and their generalizations to other groups in subsequent papers.

\subsection{Lacunarity}
\label{ssec6.4}
The theorem on lacunary isomorphism~\cite{58} says that every dyadic ergodic filtration~$\{\xi_n\}_n$ is lacunary standard, i.\,e., there exists an increasing subsequence~$\{n_k\}$ of positive integers such that the filtration~$\{\xi_{n_k}\}_k$ is a
$\{2^{n_k-n_{k-1}}\equiv r_k\}$-adic standard filtration. The sequences~$\{n_k\}$ for which this is true constitute the so-called \textit{scale}, which is an invariant of the original filtration. The scale of a~standard dyadic sequence consists of all sequences and is called complete. Of course, together with every sequence, the scale contains all sequences cofinal with it (i.\,e., differing with it in finitely many positions); also, the scale is monotone, since a~subsequence of a~standard filtration is obviously standard. It seems that these two properties exhaust the conditions on a~scale. In~\cite{69},~\cite{40}, nonstandard dyadic sequences with various scales are constructed. For instance, $\{\xi_{2k}\}$ can be a~standard 4-adic filtration, while the filtration~$\{\xi_k\}_k$ is nonstandard, etc.

By the same method one can prove a~general theorem on arbitrary inhomogeneous filtrations, not necessarily locally finite
(see~\cite{82}).

\begin{theorem}
\label{th12}
Every ergodic filtration is lacunary standard.
\end{theorem}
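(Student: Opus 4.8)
The plan is to verify the standardness criterion (in the martingale form of Theorem~\ref{th10}) for the subfiltration $\{\xi_{n_k}\}_k$, where $\{n_k\}$ is a subsequence to be chosen recursively and sufficiently sparsely. The only structural hypothesis available beyond the finite invariants is ergodicity, so the argument must extract everything from the triviality of $\bigcap_n \mathfrak{A}_n$; this is exactly the input of the ordinary martingale convergence theorem, which holds for every ergodic filtration and which I would use as the engine. Concretely, I would first fix a countable family of cylinder functions $\{f_i\}$ (equivalently, one admissible metric, by Theorem~\ref{th8}) sufficient to test the criterion, together with a summable sequence $\varepsilon_k \downarrow 0$ governing the error allowed at each stage.

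The engine is the following consequence of ergodicity: for a \emph{fixed} finite window (a fixed cylinder function, or a fixed finite partition), the conditional distribution given the remote past $\mathfrak{A}_n$ converges, as $n\to\infty$, to the unconditional distribution in $L^1(\mu_{\xi_n})$; equivalently, the window becomes asymptotically independent of $\xi_n$. Hence for two elements $C_1,C_2\in\xi_n$ lying in a common type-class $\delta_n$, the conditional measures $\mu^{C_1},\mu^{C_2}$, seen through a fixed cylinder function $f$, are both close to the same unconditional measure, and so close to one another in the distance $d_f$, for all but an $\varepsilon$-fraction of pairs once $n$ is large. This is precisely the clustering demanded by Theorems~\ref{th9} and~\ref{th10}, but only at a \emph{fixed} depth of comparison.

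The recursion then interleaves the growth of the comparison window with the choice of the gaps. Having fixed $n_1<\dots<n_{k-1}$, the coarse structure seen by the first $k-1$ steps of the subfiltration is a fixed finite object determined by $n_{k-1}$, so I would apply the engine to the finitely many functions $f_1,\dots,f_k$ on this fixed window and choose $n_k>n_{k-1}$ so large that conditioning past level $n_k$ leaves their distributions within $\varepsilon_k$ of unconditional on a set of $\mu_{\xi_{n_k}}$-measure $>1-\varepsilon_k$. Passing to the quotient filtrations $\{\xi_{n+k}/\xi_k\}$ — the key lemma already used for Theorem~\ref{th7} and in~\cite{58} — lets one iterate this step down the levels and assemble a basis of (approximate) independent complements to the $\xi_{n_k}$, which is exactly what the criterion requires; summability of $\{\varepsilon_k\}$ keeps the accumulated errors controlled and guarantees that the limiting complements generate the whole $\sigma$-algebra. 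For the general inhomogeneous, possibly non locally finite case, the combinatorial orbit-counting of the dyadic argument is replaced throughout by the type partitions $\delta_n$ and the admissible-metric (Kantorovich) formulation, whose independence of the chosen metric (Theorem~\ref{th8}) makes the estimates uniform.

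The main obstacle is exactly the \emph{diagonal} nature of standardness: the criterion compares conditional measures on trees of rank $n_k$ while conditioning at level $n_k$, so that the window and the conditioning recede to infinity together and the plain martingale theorem, valid only for a fixed window, does not apply. Lacunarity is what breaks this deadlock: by making each gap $n_k-n_{k-1}$ large, each newly adjoined coarse block becomes, by the engine above, asymptotically independent of the coarse past, so the subfiltration looks Bernoulli one coarse level at a time. Turning this ``one level at a time'' asymptotic independence into a genuine basis of independent complements — that is, upgrading approximate independence to the exact product structure required by the criterion while controlling the error that accumulates over infinitely many recursion steps — is the technical heart of the proof, and is precisely where the quotient-filtration induction does the real work.
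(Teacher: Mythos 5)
Your proposal follows essentially the same route the paper indicates: the paper gives no written-out proof of Theorem~\ref{th12}, stating only that it follows ``by the same method'' as the lacunary isomorphism theorem of~\cite{58}, i.e.\ by using ergodicity (triviality of $\bigcap_n{\mathfrak A}_n$) to make each fixed finite partition asymptotically independent of $\xi_n$, choosing the gaps $n_k-n_{k-1}$ recursively, and assembling independent complements by induction on quotient filtrations while approximating a prescribed basis --- exactly the scheme you describe, with the type partitions $\delta_n$ replacing the homogeneous combinatorics in the general case. Your sketch correctly identifies both the engine and the technical heart (upgrading approximate to exact independence with summable error), so it matches the paper's intended argument.
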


Since standardness can be violated along an arbitrary sequence, classification of filtrations can hardly be efficient.

However, the role of lacunary theorems is different: they measure the degree of closeness of a~filtration to a~standard one.

In~\cite{64}, the notion of an automorphism with a complete (dyadic) scale was introduced; in terms of this paper, this is an automorphism for which there exists a~dyadic realization with a~standard tail filtration (with respect to the given measure).\footnote{In~\cite{64}, there was an attempt to apply the results related to dyadic filtrations (standardness, scale, etc.) to the study of ergodic properties of automorphisms. However, in contrast to such applications to actions of locally finite groups, for groups that are not locally finite the requirement of dyadicity or homogeneous periodicity in the spirit of Rokhlin's lemma complicates all constructions. As the further development of the subject showed, one should pass from homogeneous filtrations to semihomogeneous ones; it is in this way that adic realizations of group actions are constructed, and the notions of standardness and complete scale become more natural.}
Later, A.\,B.~Katok~\cite{28} (see also~\cite{29}) introduced and studied the notion of a~standard automorphism as an automorphism that is monotonically equivalent to an automorphism with universal discrete spectrum. It would be interesting to find out whether these notions coincide.

\subsection{Scaled and secondary entropy}
\label{ssec6.5}
Another measure of closeness to standardness is the scaled entropy of a~filtration (and its analog, the scaled entropy of a~group action).

Recall that the condition of the standardness criterion is that the metric~$d^n_{\rho}$ on the space of measured trees converges in measure to a~degenerate metric.

\begin{proposition}
\label{pr7}
If a~filtration is ergodic, then the sequence of metrics~$d^n_{\rho}$ either converges in measure to a~degenerate metric, or has no limit.
\end{proposition}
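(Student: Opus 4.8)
The plan is to read the convergence statement on the product space $(X\times X,\mu\times\mu)$ and to exploit ergodicity through a zero--one law for the \emph{doubled} filtration. First I would note that, by its very definition, the quantity $d^n_\rho(C_1,C_2)$ depends only on the $\xi_n$-classes of its two arguments, so lifting it to $X\times X$ by $F_n(x,y):=d^n_\rho(\xi_n(x),\xi_n(y))$ produces a function measurable with respect to $\mathfrak A_n\otimes\mathfrak A_n$; moreover the push-forward of $\mu\times\mu$ under $(x,y)\mapsto(\xi_n(x),\xi_n(y))$ is exactly $\mu_{\xi_n}\times\mu_{\xi_n}$, so ``convergence in measure of $d^n_\rho$'' means convergence of $F_n$ in measure on $(X\times X,\mu\times\mu)$. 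Suppose such a limit $F$ exists. Since the filtration decreases, for every fixed $n$ and every $m\geqslant n$ the function $F_m$ is $\mathfrak A_n\otimes\mathfrak A_n$-measurable; passing to an a.e.\ convergent subsequence shows that $F$ is $\mathfrak A_n\otimes\mathfrak A_n$-measurable $\bmod 0$ for each $n$, hence measurable with respect to the tail $\sigma$-algebra $\mathcal T:=\bigcap_n(\mathfrak A_n\otimes\mathfrak A_n)$.

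The heart of the argument is to prove that $\mathcal T$ is trivial, i.e.\ that the independent self-product of an ergodic filtration is again ergodic. Ergodicity of $\tau$ means $\bigcap_n\mathfrak A_n=\mathfrak N$, which by reverse martingale convergence is equivalent to $E[g\mid\mathfrak A_n]\to E[g]$ in $L^2(X,\mu)$ for every $g$. For a product function $h=g_1\otimes g_2$ one has $E[h\mid\mathfrak A_n\otimes\mathfrak A_n]=E[g_1\mid\mathfrak A_n]\otimes E[g_2\mid\mathfrak A_n]$, which therefore converges in $L^2$ to $E[g_1]E[g_2]=E[h]$. Since finite linear combinations of product functions are dense in $L^2(X\times X,\mu\times\mu)$ and each conditional expectation $E[\,\cdot\mid\mathfrak A_n\otimes\mathfrak A_n]$ is an $L^2$-contraction, a routine $3\varepsilon$-estimate extends this to $E[h\mid\mathfrak A_n\otimes\mathfrak A_n]\to E[h]$ for every $h\in L^2(X\times X)$. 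Applying this to a $\mathcal T$-measurable $h$, for which $E[h\mid\mathfrak A_n\otimes\mathfrak A_n]=h$ identically, forces $h=E[h]$; thus every $\mathcal T$-measurable square-integrable function is constant and $\mathcal T$ is trivial.

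Combining the two steps, $F$ is $\mathcal T$-measurable and hence constant $\mu\times\mu$-almost everywhere. A distance function that is $\mu\times\mu$-a.e.\ equal to a single constant does not separate points and is precisely what we call a degenerate metric (the case of the zero metric being the standard one); consequently, whenever the sequence $d^n_\rho$ converges in measure its limit is degenerate, which is exactly the stated dichotomy. I expect the genuine obstacle to lie entirely in the second paragraph: one must check that the reverse martingale and density argument really yields triviality of the \emph{product} tail and is not an instance of the subtle failures (of the Tsirelson type) that can occur for finer invariants than tail triviality; the point is that ergodicity is an $L^2$ (Hilbert-space) property, stable under tensoring, whereas standardness is not. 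A secondary technical point is the $\bmod 0$ bookkeeping in identifying $F$ as tail-measurable, which is handled by the usual passage to an almost-everywhere convergent subsequence.
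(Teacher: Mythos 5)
Your argument is correct, and it is a rigorous version of the justification the paper itself gives only in one sentence (``the existence of a limit that is not reduced to a one-point space contradicts the ergodicity, since otherwise nontrivial limiting functionals would appear''). The real content you add, and the one place where care is genuinely needed, is the triviality of the product tail $\bigcap_n(\mathfrak A_n\otimes\mathfrak A_n)$: this does \emph{not} follow formally from $\bigcap_n\mathfrak A_n=\mathfrak N$ by exchanging intersection with tensor product (compare the Kolmogorov--Rokhlin example in Sec.~8 of the paper, where $\bigcap_n(\mathfrak B\vee\mathfrak A_n)\ne\mathfrak B$), but your reverse-martingale-plus-density argument settles it precisely because the two coordinates are independent under $\mu\times\mu$, which is what validates the factorization $E[g_1\otimes g_2\mid\mathfrak A_n\otimes\mathfrak A_n]=E[g_1\mid\mathfrak A_n]\otimes E[g_2\mid\mathfrak A_n]$. (A slightly more elementary route to the same conclusion, closer to the letter of the paper's remark, is to fix $y$ by Fubini: for a.e.\ $y$ the section $x\mapsto F(x,y)$ is a $\bmod\,0$ limit of $\mathfrak A_n$-measurable functions for every $n$, hence tail-measurable on $X$ and constant $=c(y)$, and then $c(y)$ is itself tail-measurable in $y$, hence constant; this avoids the product-tail lemma altogether.) The only loose end is terminological: your argument shows the limit is $\mu\times\mu$-a.e.\ equal to a constant $c$, not that $c=0$; the paper's phrase ``reduced to a one-point space'' suggests the zero semimetric is intended, and neither the paper nor your proof rules out $c>0$ on its own. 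Since an a.e.-constant semimetric on a continuous measure space separates nothing measurably (and is not even essentially separable when $c>0$, hence not admissible), calling it degenerate is legitimate, but you should state explicitly that this is the sense in which you use the word.
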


The reason of this simple, but important fact is that the existence of a~limit that is not reduced to a~one-point space contradicts the ergodicity, since otherwise nontrivial limiting functionals would appear.

Thus in the case of a~nonstandard filtration there arises a~sequence of metric spaces that has no limiting metric space, but one can track the $\epsilon$-entropy of this metric space as a~function of~$n$. It turns out that the asymptotics of this sequence for some normalization does not depend on the choice of a~metric, and thus its class is an invariant of the filtration. It is in this way that this invariant was defined in~\cite{65},~\cite{59} in the case of a~dyadic filtration with exponential (Kolmogorov) normalization. In~\cite{69},~\cite{84}, it
was defined in the general case and, for reasons related to the stationary case, called secondary entropy. In the sense of this definition and under a correct normalization, the secondary entropy of the filtration corresponding to the transformation $(T,T^{-1})$,
by a~theorem from~\cite{23}, equals the entropy~$h(T)$.
In~\cite{91}, it was proved that the scaled entropy of the adic automorphism\footnote{The definition of this entropy was given in the author's papers~\cite{73},~\cite{74}, where its main properties were formulated, proved later by P.~B.~Zatitskiy~\cite{100},~\cite{101}.} for some measures on the graph of ordered pairs is equal to the entropy (scaling) sequence (with respect to the same measure) of the tail filtration. These theorems should be extended to arbitrary groups.

\subsection{A project of classification of nonstandard filtrations}
\label{ssec6.6}
Kolmogorov's zero--one law (the triviality of the $\sigma$-algebra of the intersection of the $\sigma$-algebras of a~filtration) is the simplest condition on the behavior of a~filtration at infinity. Standardness, or Bernoulli property, is, on the contrary, the strongest such condition: it guarantees that at infinity there is no complexity, since all conditional distributions converge in the strongest sense. Here, in contrast to the traditional way of measuring the closeness of a~process to a~process of independent random variables, this closeness is expressed not by the rate of convergence to zero of the correlation coefficients or other characteristics, but by the smallness of the difference (in one or another metric) between  the tree structures of conditional measures. Thus one can speak about ``higher zero--one laws,'' meaning assertions that conditional distributions approach one another in some metric, which are intermediate between the ordinary zero--one law and standardness (Bernoulli property).

As observed in the previous section, such a~scale of conditions apparently exists.

Ornstein's very weak Bernoulli (VWB) condition is also a~law of this type, but, as shown by examples, it does not coincide with the standardness condition: in~\cite{16}, an example is given of a~non-Bernoulli process with a~standard ergodic dyadic filtration, and our examples show that a~Bernoulli automorphism can have a~Markov generator with respect to which the past is not standard; another example is given in~\cite{23}. However, these are only examples, and a~more complete analysis of the situation is yet to be carried out. Note also that conditions may depend on what metric on the space of measured trees we choose.

Recall that  the scheme of the standardness criterion involves considering functions on
$X/\xi_n$ with values in the space of measured trees of rank~$n$, and the condition  is that  for sufficiently large~$n$ the values of the functions approach one another with respect to a~metric on the space of trees,
i.\,e., the sequence of functions converges to a~constant in measure.
 In other words, the images of the measures on the spaces of trees converge to a~$\delta$-measure. Instead of functions on the quotient space $(X/\xi_n,\mu_{\xi})$, it is convenient to consider their liftings to the space $(X,\mu)$ itself, and then on $(X,\mu)$ we obtain semimetrics~$\rho_n$ in which the distance between points is the distance between their images, the values of the functions,
i.\,e., the corresponding trees.

The condition of the standardness criterion is  that the sequence of these metrics collapses to a~point, and if this condition is satisfied, then two finitely isomorphic filtrations are isomorphic. If there is no standardness, then the functions do not converge to a~constant in measure, and this means that the sequence $(X,\mu,\rho_n)$, $n=1,2,\dots$, where $\rho_n$ is the sequence of semimetrics, does not converge and there is no limiting metric on
$(X,\mu)$.

What criteria can we use to establish an isomorphism of filtrations? Necessary conditions are the coincidence of the entropy, of the scale, etc.\ (see~\cite{64}).  But can we state sufficient conditions? For this, one should understand what does it mean that  two sequences of (semi)metric spaces for which there is no limiting space are asymptotically isomorphic.

The conjecture we formulate below is that sometimes finite isomorphism supplemented by a~similar asymptotic isomorphism implies isomorphism. This conjecture reduces the study of all invariants of nonstandard filtrations (like, e.\,g., entropy) to a~unified context.

To state this conjecture more precisely, recall the following general idea related to asymptotic isomorphism suggested in~\cite{82}.

Consider a~so-called metric triple, or, in another terminology, an admissible triple $\theta=(X,\mu,\rho)$, that is, a~space~$X$ endowed with a~measure~$\mu$ and a~metric~$\rho$ that agree with each other: the metric~$\rho$ is measurable as a~function of two variables on the space $(X,\mu)$,
the space $(X,\rho)$ is separable as a~metric space, and~$\mu$ is a~Borel probability measure on this metric space $(X,\rho)$. By the way, note that, in contrast to the traditional practice of fixing a~metric on a~space and considering various Borel measures on the resulting metric space, the author has repeatedly advocated the usefulness of another approach: fixing a~measure and varying metrics on the resulting measure space~\cite{71}.

In~\cite{19},~\cite{71},~\cite{72}, a~theorem is proved on a~complete system of invariants of metric triples with respect to the group of measure-preserving isometries.

\begin{definition}
\label{def9}
The matrix distribution~$D_{\theta}$ of a~metric triple
$\theta=(X,\mu,\rho)$ is the measure in the space of infinite nonnegative real matrices defined as the image of the Bernoulli measure~$\mu^{\infty}$ on~$X^{\infty}$ under the map
\begin{align*}
M_{\theta}\colon (X^{\infty},\mu^{\infty}) &\to
({\mathbb M}_{\mathbb N }(\mathbb{R}_+),D_{\theta}),
\\*
\{x_n\}_n&\mapsto \{\rho(x_i,x_j)\}_{(i,j)}.
\end{align*}
\end{definition}

\begin{theorem}
\label{th13}
Two metric triples
$$
\theta=(X,\mu,\rho)\quad\text{and}\quad
\theta'=(X',\mu',\rho')
$$
with nondegenerate measures ($\mu(U)>0$, $\mu(U')>0$ if $U$,~$U'$ are nonempty open sets in~$X$,~$X'$) are isomorphic with respect to the above equivalence  (i.\,e., there exists an invertible isometry
$T\colon X\to X'$ preserving the measure: $T\mu=\mu'$) if and only if their matrix distributions coincide:
$$
D_{\theta}=D_{\theta'}.
$$
\end{theorem}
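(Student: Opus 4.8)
The plan is to prove the two directions separately, with the reconstruction (the ``if'' direction) carrying all the weight. The ``only if'' direction is essentially functoriality of the sampling construction: if $T\colon X\to X'$ is an invertible measure-preserving isometry with $T\mu=\mu'$, then the product map $T^{\infty}\colon X^{\infty}\to X'^{\infty}$ sends $\mu^{\infty}$ to $\mu'^{\infty}$, and because $T$ preserves distances we have $\rho'(Tx_i,Tx_j)=\rho(x_i,x_j)$ for all $i,j$. Hence $M_{\theta'}\circ T^{\infty}=M_{\theta}$ as maps into $\mathbb{M}_{\mathbb N}(\mathbb R_+)$, and pushing $\mu^{\infty}$ forward along both sides gives $D_{\theta}=D_{\theta'}$ at once.

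For the ``if'' direction I would show that $\theta$ can be reconstructed, up to isomorphism, from $D_{\theta}$ alone, by a procedure that reads off both the metric and the measure from a single $D_{\theta}$-typical matrix. First I would fix a matrix $r=(r_{ij})_{i,j\in\mathbb N}$ in the support of $D_{\theta}$; for $\mu^{\infty}$-almost every sequence $\{x_n\}_n$ the entries $r_{ij}=\rho(x_i,x_j)$ define a genuine (semi)metric on the index set $\mathbb N$, so after identifying points at distance zero and taking the metric completion one obtains a complete separable metric space $(\widehat X,\widehat\rho)$. The nondegeneracy hypothesis is exactly what guarantees that an i.i.d.\ sample from $\mu$ is almost surely dense in $X$ (since every nonempty open set has positive measure, it is hit infinitely often), so $(\widehat X,\widehat\rho)$ is canonically isometric to $(X,\rho)$.

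It remains to recover the measure. Here I would invoke the strong law of large numbers in its empirical-measure form (Glivenko--Cantelli, Varadarajan): for $\mu^{\infty}$-almost every sequence the empirical measures $\frac{1}{n}\sum_{i=1}^{n}\delta_{x_i}$ converge weakly to $\mu$. Transported to the index set, $\mu$ is thus recovered as the weak limit of the uniform measures on $\{1,\dots,n\}$ pushed into $(\widehat X,\widehat\rho)$, and this limit is determined by the matrix $r$ alone, since the relative position of each index in the completion --- and hence which balls it lands in --- is encoded in the distances $r_{ij}$. The outcome is a metric triple $(\widehat X,\widehat\mu,\widehat\rho)$ whose isomorphism class is a function of $r$ and which equals the isomorphism class of $\theta$ for $D_{\theta}$-almost every $r$. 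Since this class is therefore $D_{\theta}$-almost surely constant and depends on $\theta$ only through $D_{\theta}$, two triples with $D_{\theta}=D_{\theta'}$ yield the same reconstructed class, whence $\theta\simeq\theta'$.

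The main obstacle I anticipate is making the measure-recovery step fully rigorous at the level of distance matrices rather than of points: one must check that the measures of metric balls, and more generally the integrals of bounded continuous functions, are genuinely measurable functions of the matrix $r$ and are recovered as almost sure limits of empirical frequencies, uniformly enough that the weak limit exists and is independent of the $D_{\theta}$-typical representative chosen. This requires a Glivenko--Cantelli type argument adapted to the separable space $(\widehat X,\widehat\rho)$, together with careful bookkeeping of the $\operatorname{mod} 0$ conventions so that the null sets discarded at each stage (non-metric matrices, non-dense samples, samples with divergent empirical measures) assemble into a single $D_{\theta}$-null set off of which the construction is deterministic.
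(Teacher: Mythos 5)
The paper offers no proof of Theorem~\ref{th13} --- it is quoted from \cite{19}, \cite{71}, \cite{72} --- and your argument is exactly the standard reconstruction proof from those sources: functoriality of the sampling map $M_\theta$ for the ``only if'' direction, and, for the ``if'' direction, completion of an almost surely dense i.i.d.\ sample (density coming from nondegeneracy plus separability) followed by recovery of $\mu$ as the almost sure weak limit of the empirical measures (Varadarajan's theorem), with the two full-measure sets of ``good'' matrices intersected at the end to transfer the isomorphism from $\theta$ to $\theta'$. This is correct, and the measurability worries you raise at the end are resolvable by precisely the Glivenko--Cantelli/Varadarajan argument you name. The one detail worth making explicit is that the completion of the sample recovers the completion of $(X,\rho)$ rather than $X$ itself, so you should either take the triples to be complete separable spaces or appeal to the paper's pervasive $\operatorname{mod} 0$ conventions, under which the discrepancy is a null set and harmless.
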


In~\cite{70},~\cite{76},~\cite{85}, a~similar theorem is proved on the classification of arbitrary measurable functions of several variables via matrix distributions.

\begin{definition}
\label{def10}
Assume that for a~sequence of admissible metric triples on the same measure space $(X,\rho_n,\mu)$ there exists a~weak limit   $\lim_n D_{\theta_n}$ of  matrix distributions (regarded as measures on the space of matrices~${\mathbb M}_{\mathbb N}(\mathbb{R}_+)$ with the ordinary weak topology). This limit will be called the virtual matrix distribution of the sequence of metric triples. It is an invariant with respect to a~sequence of isometries~$I_n$,
$n=1,2,\dots$\,.
\end{definition}

Note that if the metric triples do not converge to any metric triple, then the limit of the matrix distributions, if it exists, is not the matrix distribution of any metric triple. On the other hand, the set of all matrix distributions of metric triples is not weakly closed, hence one can use virtual matrix distributions for classification of some diverging sequences of metric spaces.

Now we can state a~conjecture on isomorphism of filtrations.

\begin{conjecture}
\label{con1}
Consider two finitely isomorphic ergodic filtrations~$\tau$,~$\tau'$
and assume that
{
\begin{enumerate}
\renewcommand{\labelenumi}{{\rm\arabic{enumi})}}
\renewcommand{\theenumi}{{\rm\arabic{enumi})}}

\item\label{lab1} they both are nonstandard;

\item\label{lab2} they both have virtual matrix distributions of the sequences of (semi)metric triples.
\end{enumerate}
}

If these matrix distributions coincide, then the filtrations are isomorphic.
\end{conjecture}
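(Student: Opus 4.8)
The plan is to show that the pair consisting of (i) the finite isomorphism type and (ii) the virtual matrix distribution of the associated sequence of semimetric triples is a \emph{complete} invariant within the class of ergodic filtrations; the conjecture is then precisely the completeness half of this statement. That these data are invariants is already clear: the finite type is an invariant by Proposition~\ref{pr4}, and the virtual matrix distribution is invariant under sequences of isometries by Definition~\ref{def10}. So the entire difficulty is to reconstruct an isomorphism of filtrations from the coincidence of these two invariants.

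First I would record the finite data. Since $\tau$ and $\tau'$ are finitely isomorphic, Proposition~\ref{pr4}(2) gives that their minimal models coincide; realizing both on the path space of the common minimal graph $\Gamma$ yields, for every $n$, a measure-preserving identification $j_n\colon X/\xi_n \to X'/\xi'_n$ of the quotient spaces that respects the measured-tree structure of each fiber, coherently with the projections $X/\xi_{n-1}\to X/\xi_n$. Fix an admissible metric $\rho$ on $(X,\mu)$ and let $\rho_n$ be the lift to $(X,\mu)$ of the tree-distance $d^n_\rho$ on $X/\xi_n$ (so that $\rho_n$ is $\xi_n\times\xi_n$-measurable); choose an admissible metric $\rho'$ on $(X',\mu')$ whose level-$n$ tree data are carried to those of $\rho$ by $j_n$. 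The hypothesis is that the virtual matrix distributions $\lim_n D_{\theta_n}$ and $\lim_n D_{\theta'_n}$ exist and coincide, where $\theta_n=(X,\mu,\rho_n)$ and $\theta'_n=(X',\mu',\rho'_n)$.

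The reconstruction step is modelled on the mechanism behind Theorem~\ref{th13}. I would sample an i.i.d. sequence $(x_i)_{i\ge 1}\sim\mu$, form for each $n$ the random matrix $\{\rho_n(x_i,x_j)\}_{i,j}$, and likewise sample $(x'_i)\sim\mu'$. Coincidence of the virtual matrix distributions means these two arrays of random matrices have the same limiting joint law as $n\to\infty$. Using this, together with the fixed finite identifications $j_n$, I would couple the two sampling processes so that the sampled configurations are matched compatibly at every finite level, and then pass to the limit (along a subsequence, via a diagonal argument over $n$ and over the sample size) to produce a measure-preserving bijection $T\colon(X,\mu)\to(X',\mu')$. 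Because $T$ is built to intertwine the $j_n$ and because each $\rho_n$ is $\xi_n$-measurable, $T$ would automatically satisfy $T\xi_n=\xi'_n$ for all $n$, so $T$ is an isomorphism of filtrations. This is exactly the adaptation of the ``if'' direction of the standardness criterion (Theorems~\ref{th7} and~\ref{th9}), in which the collapse of $d^n_\rho$ to a point---which there allows one to build a Bernoulli basis---is replaced by convergence to a common \emph{nontrivial} virtual limit, which here should allow one to build a coherent isomorphism.

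The main obstacle is precisely the point flagged after Definition~\ref{def10}: the virtual matrix distribution is in general \emph{not} the matrix distribution of any metric triple, since the sequence $(X,\mu,\rho_n)$ has no limiting metric space. Consequently Theorem~\ref{th13} cannot be invoked as a black box in the reconstruction step, and the clean Gromov-type uniqueness it provides is unavailable. What one really needs is a \emph{relative} reconstruction theorem: the coupling of the two sampling processes must be shown to be essentially unique \emph{given} the finite identifications $j_n$, even though no limit object exists to anchor it. Establishing this rigidity---controlling the coupling simultaneously at all levels and ruling out that equal virtual distributions could arise from genuinely non-isomorphic towers---is the crux of the conjecture, and is the reason it is stated as a conjecture rather than a theorem.
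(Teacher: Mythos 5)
This statement is a \emph{conjecture} in the paper: no proof is given, and the author explicitly says that the main open problem is to find a complete system of invariants for a diverging sequence of metrics on a measure space, and even doubts that hypothesis 2) can always be satisfied. Your proposal is therefore not being measured against an existing argument; it is a strategy sketch, and you yourself concede in the final paragraph that the ``relative reconstruction'' rigidity step is missing. That concession is accurate: the proposal does not prove the conjecture, and the gap you flag is the entire content of the statement.

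Beyond that honest admission, there is one concrete overstatement worth correcting. Definition~\ref{def10} defines the virtual matrix distribution as the weak limit of the individual measures $D_{\theta_n}$ on matrix space, i.e.\ it records only the limiting \emph{marginal} law of the $n$th distance matrix. Your reconstruction step asserts that coincidence of the virtual matrix distributions means the two arrays of random matrices ``have the same limiting joint law as $n\to\infty$''; this does not follow from the hypothesis. Without joint control across levels there is no way to couple the sampling processes coherently over all $n$ simultaneously, which is exactly what your diagonal argument needs. A second difficulty you pass over quickly: even if a measure-preserving bijection $T$ matching the semimetrics $\rho_n$ could be produced, the matrix distribution of $\rho_n$ determines the level-$n$ tree data only up to a measure-preserving isometry of $(X,\mu,\rho_n)$, and such an isometry need not carry $\xi_n$ to $\xi'_n$ nor be compatible with the choice made at level $n+1$; so ``$T\xi_n=\xi'_n$ automatically'' is not justified. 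Both points are instances of the rigidity problem you name, and they are why the statement remains open.
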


The validity of this conjecture, along with the standardness criterion, would allow us to believe that the classification of nonstandard filtrations is completed in the above terms and under the above assumption on the validity of condition~\ref{lab2} for an arbitrary nonstandard filtration. The answer would be that the virtual matrix distribution together with the finite invariants exhaust all invariants of ergodic locally finite filtrations. However, condition~\ref{lab2}
(more exactly, the existence of a~virtual matrix distribution) can hardly always be  satisfied.

Hence the main problem is to find a~complete system of invariants for a~diverging sequence of metrics on a~measure space. For the dyadic case, it suffices to consider invariants of sequences of metrics arising from iterations of metrics on the graph of unordered pairs, which we discuss in the next section.

But in the example of a~Markov chain with a~nonstandard filtration from Sec.~\ref{ssec6.2}, the virtual matrix distribution does exist. Namely, the sequence of matrix distributions of triples on the two-point space,
$$
\rho_{n}(a,b)=0, \quad
n\equiv 0 \ (\operatorname{mod} 2); \qquad
\rho_{n}(a,b)=1, \quad
n\equiv 1 \ (\operatorname{mod} 2),
$$
converges in the sense of Ces\`aro to the virtual matrix distribution of this sequence, which is a~measure on the set of infinite symmetric matrices with zeros on the diagonal and with independent entries~$r_{i,j}$, $i>j$,
taking the values~0 and~1 with equal probabilities.

\section{Projective limits of simplices, invariant measures, and the absolute of a~graph}
\label{sec7}

This section is devoted to the theory of filtrations in Borel spaces: in this case,  a~priori there is no measure, but only an equipment, i.\,e., conditional measures. The main problem is to describe all Borel probability measures that agree with the equipment.

We describe the fourth language of the theory of locally finite filtrations: that of projective limits of finite-dimensional simplices. One may say that the category of projective limits of finite-dimensional simplices is equivalent to the category of locally finite filtrations in Borel spaces, or the category of multigraphs with equipments, or the category of Markov compacta with cotransition probabilities. Although this is an extremely natural relation, nevertheless, as far as the author knows, until recently the functorial equivalence of the theory of Borel and metric filtrations on the one hand and the convex geometry of projective limits of simplices on the other hand almost has not been discussed in the literature in this context. We use some definitions and facts from the recent paper~\cite{78} by the author (see also~\cite{36},~\cite{5},~\cite{98},~\cite{52}). The analysis of the theory of locally finite filtrations in Borel spaces, as well as the theory of central and invariant measures on path spaces of multigraphs, or the theory of Markov measures with given cotransitions, shows that the adequate and geometric language of projective limits of finite-dimensional simplices is the most appropriate language for all these theories. Below we will describe a~correspondence between the problem of describing the measures that agree with an equipment and, in particular, of describing the central measures, and the geometry of projective limits of simplices (for more details, see~\cite{78}).

\subsection{Projective limits of simplices, extreme points}
\label{ssec7.1}
First, we will show how, given a~pair $(\Gamma,\Lambda)$,
i.\,e., an equipped graded graph, one can define, in a canonical way, a~projective limit of finite-dimensional simplices. Later we will see that there is also a~reverse transition from projective limits to equipped graphs.

Denote by~$\Sigma_n$ the finite-dimensional simplex of formal convex combinations of vertices
 $v \in\Gamma_n$ of level~$n$. It is natural to view the simplex as the set of all probability measures on the set of vertices of~$\Gamma_n$. We define affine projections
$$
\pi_{n,n-1}\colon \Sigma_n \to \Sigma_{n-1};
$$
it suffices to define them for every vertex $v \in \Gamma_n$. Obviously, these projections can be regarded as a~system of cotransition probabilities~$\Lambda$, and the images of vertices~$v$ are points of the previous simplex,
i.\,e., probability vectors:
\begin{equation}
\label{eq7.1}
\pi_{n,n-1}(\delta_v)=\sum_{u: u\prec v} \lambda_v^u \delta_u;
\end{equation}
this map can be extended by linearity to the whole simplex~$\Gamma_n$. The vertex~$\varnothing$ gives rise to the zero-dimensional simplex consisting of a~single point. Degeneracies are allowed (i.\,e., projections may glue together different vertices). Now we define projections of simplices with arbitrary numbers:
\begin{gather*}
\pi_{n,m}\colon \Sigma_n \to \Sigma_m,\qquad
m<n;
\\
\pi_{n,m}=\prod_{i=m}^{n+1}\pi_{i,i-1},\qquad
m>n,\quad
m,n \in \mathbb N.
\end{gather*}
The collection of data $\{\Sigma_n, \pi_{n,m}\}$ allows one to define a~projective limit on the one hand, and to recover the graph on the other hand: the vertices of the graph~$\Gamma_n$ are the vertices of~$\Sigma_n$, and the edges (and then also the paths in the graph) are recovered from the nonzero coordinates of the vectors~$\pi_{n,n-1}$, $n \in \mathbb{N}$.

Let ${\mathscr M}=\prod\limits_{n=0}^{\infty}\Sigma_n$ be the direct product of the simplices~$\Sigma_n$, $n\in \mathbb N$, endowed with the product topology.

\begin{definition}
\label{def11}
The space of the projective limit of the family~$\{\Sigma_n\}_n$ of simplices with respect to the system of projections~$\{\pi_{n,m}\}$ is the following subset of the direct product~${\mathscr M}$:
\begin{align*}
\lim_{n\to\infty}(\Sigma_n,\pi_{n,m}) &\equiv
\bigl\{\{x_n\}_n; \ \pi_{n,n-1}(x_n)=x_{n-1}, \ n=1,2,\dots\bigr\}
\\
&\equiv (\Sigma_{\infty},\Lambda) \subset \prod_{n=0}^{\infty}\Sigma_n
=\mathscr M.
\end{align*}
\end{definition}

\begin{proposition}
\label{pr8}
The space of the projective limit~$\Sigma_{\infty}$ is always a~nonempty, convex, closed (and hence compact) subset of the direct product~$\mathscr M$, which is a~(possibly infinite-dimensional) Choquet simplex.
\end{proposition}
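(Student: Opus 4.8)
The plan is to dispose of the three elementary assertions first and then concentrate on the simplex property, which is the substance. First I would record that $\mathscr M=\prod_{n=0}^{\infty}\Sigma_n$ is a countable product of compact metrizable convex sets, hence itself compact, metrizable and convex by Tychonoff's theorem. For each $n\geq 1$ the connecting map $\pi_{n,n-1}$ is continuous and affine, so the set $F_n=\{(x_m)_m\in\mathscr M:\pi_{n,n-1}(x_n)=x_{n-1}\}$ is closed (it is the preimage of the diagonal of the Hausdorff space $\Sigma_{n-1}\times\Sigma_{n-1}$) and convex (it is cut out by an affine equation). Since $\Sigma_{\infty}=\bigcap_{n\geq 1}F_n$, it is closed and convex, and being closed in the compact $\mathscr M$ it is compact. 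For nonemptiness I would note that the partial compatibility sets $G_N=\bigcap_{n=1}^{N}F_n$ are nonempty (pick any $x_N\in\Sigma_N$, set $x_m=\pi_{N,m}(x_N)$ for $m<N$, and fill in the coordinates $m>N$ arbitrarily) and form a decreasing sequence of nonempty closed subsets of the compact set $\mathscr M$; by the finite intersection property $\Sigma_{\infty}=\bigcap_N G_N\neq\varnothing$.

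For the simplex property I would use the characterization that a compact convex set $K$ is a Choquet simplex precisely when its order-unit space $A(K)$ of continuous affine functions has the Riesz interpolation property. The first step is to identify $A(\Sigma_{\infty})$ with the uniform closure of $A_0=\bigcup_n p_n^{\ast}A(\Sigma_n)$, where $p_n\colon\Sigma_{\infty}\to\Sigma_n$ is the canonical projection. This rests on the standard fact that for any compact convex $K$ in a locally convex space $E$ the functions $\ell|_K+c$, with $\ell\in E^{\ast}$ and $c$ constant, are uniformly dense in $A(K)$ (a signed measure annihilating all of them annihilates all continuous affine functions by the barycenter identity, hence is $0$). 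Here $E=\prod_n V_n$ with $V_n=\mathbb R^{\Gamma_n}$, so $E^{\ast}=\bigoplus_n V_n^{\ast}$ consists of functionals depending on finitely many coordinates; pushing all coordinates up to one level $n$ via the compatibility relations shows $\ell|_{\Sigma_{\infty}}+c\in p_n^{\ast}A(\Sigma_n)$. Since $p_n=\pi_{n+1,n}\circ p_{n+1}$ gives $p_n^{\ast}A(\Sigma_n)\subseteq p_{n+1}^{\ast}A(\Sigma_{n+1})$, the union $A_0$ is directed and dense.

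Next I would reduce interpolation in $A(\Sigma_{\infty})$ to interpolation for quadruples already lying in $A_0$, the passage to general quadruples being handled by a successive-approximation ($\varepsilon/2^{k}$-correction) scheme using only completeness and the closedness of the positive cone. Interpolation \emph{within} $A_0$ is where the real difficulty lies, and I expect it to be the main obstacle. One would like to interpolate at the finite level $M$ carrying all four functions, exploiting that $A(\Sigma_M)\cong\mathbb R^{\Gamma_M}$ is a lattice; but the order that $A_0$ inherits from $A(\Sigma_{\infty})$ is strictly coarser than the coordinatewise order, because the hypothesis $f_i\le g_j$ on $\Sigma_{\infty}$ records only the averaged inequalities over the image $K_M=p_M(\Sigma_{\infty})$, and $K_M$ — a closed convex image of the simplices above it — is in general \emph{not} a simplex, so affine data on it cannot be interpolated. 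The way out is to ascend to higher levels $M'\to\infty$ and use that every vertex of $\Sigma_{M'}$ is extreme, so that these averaged inequalities refine, in the limit, to genuine pointwise inequalities along the extreme boundary, restoring interpolability.

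I would make this limiting step rigorous by the equivalent criterion of uniqueness of maximal representing measures, which is available because $\mathscr M$, and hence $\Sigma_{\infty}$, is metrizable: two maximal probability measures representing the same point of $\Sigma_{\infty}$ have equal pushforwards $(p_n)_{\ast}$ to each simplex $\Sigma_n$, and since the projections $p_n$ generate the Borel structure of $\Sigma_{\infty}\subset\prod_n\Sigma_n$, the two measures coincide. Alternatively, and more economically for a survey, one simply invokes the known theorem that a projective limit of finite-dimensional simplices along affine continuous maps is a Choquet simplex (see \cite{78} and the standard sources on Choquet theory, e.g.\ the books of Phelps and of Alfsen). Either route yields that $\Sigma_{\infty}$ is a (possibly infinite-dimensional) Choquet simplex, completing the proof.
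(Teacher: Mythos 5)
Your treatment of the elementary assertions is correct and in fact more careful than the paper, which dismisses nonemptiness, convexity, and closedness as obvious; the finite-intersection-property argument and the observation that each compatibility set is cut out by a continuous affine equation are exactly right. For the simplex property the paper also gives no self-contained convex-geometric proof: it reduces the claim to the uniqueness of the decomposition over the Choquet boundary (Proposition~\ref{pr9}, part 3, and Corollary~\ref{cor3}) and states explicitly that the correct way to see this is to invoke the uniqueness of the decomposition of measures with a given cocycle into ergodic components, i.e., the probabilistic reading of $\Sigma_\infty$ as the set of measures on the path space with prescribed cotransitions. So your final fallback --- citing the known theorem that a projective limit of finite-dimensional simplices is a Choquet simplex --- lands at the same level of rigor as the paper, merely routed through Choquet theory rather than through ergodic decomposition.

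Both of your attempted self-contained arguments, however, have a genuine gap at exactly the crux. In the interpolation route you correctly identify the obstruction --- the inequalities $f_i\le g_j$ are known only on $K_M=p_M(\Sigma_\infty)$, which is not a simplex --- but the proposed remedy (ascend to higher levels so that the ``averaged inequalities refine, in the limit, to genuine pointwise inequalities along the extreme boundary'') is a hope, not an argument; making it precise is essentially the whole theorem. In the maximal-measure route, the assertion that two maximal measures representing the same point of $\Sigma_\infty$ have equal pushforwards under $p_n$ is unjustified and is where all the content lives: a maximal measure is carried by $\operatorname{ex}(\Sigma_\infty)$, whose image under $p_n$ lies in $\Omega_n$ and in general consists of \emph{interior} points of $\Sigma_n$ (for the Pascal graph, ergodic central measures project to binomial-type distributions, nowhere near the vertices), so the pushforward is not a maximal measure on $\Sigma_n$ and the finite-dimensional simplex property of $\Sigma_n$ cannot be applied to force equality. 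A priori the two pushforwards share only their barycenter $x_n$, and a barycenter does not determine a measure. Identifying the common pushforward with the canonical measure $\nu_{x_n}$ of Proposition~\ref{pr9} is precisely the missing martingale/ergodic-decomposition step that the paper delegates to the literature.
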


The affine structure of the direct product~$\mathscr M$ determines an affine structure on the limiting space; the fact that the set is nonempty and closed is obvious. It remains to check only that an arbitrary point of the limit has a~unique decomposition over its Choquet boundary (see below).

We distinguish between the space of the projective limit and the ``structure of the projective limit,'' meaning that not only the limiting space itself, i.\,e., some infinite-dimensional simplex, is important for us,
 but also the structure of the pre-limit simplices and their projections. In other words, we consider the category of projective limits of simplices in which an object is not a~limiting simplex, but a~sequence of finite-dimensional simplices.

Let us show that the collection of data of a~projective limit of simplices allows one to recover the graph, the path space, and the system of cotransition probabilities, and the projective limit corresponding to this system and constructed according to the above rule coincides with the original one. This will establish a~tautological relation between the two languages:  the language of pairs (a~Bratteli diagram, a~system of cotransitions) on the one hand, and the language of projective limits of finite-dimensional simplices on the other hand.

Indeed, assume that we have a~projective limit of finite-dimensional simplices~$\{\Sigma_n\}$, $n\in \mathbb N$,
and a~coherent system of affine projections
$$
\{\pi_{n,m}\},\qquad
\pi_{n,m}\colon\Sigma_n \to \Sigma_m,\quad
n\geqslant m,\quad
n,m\in \mathbb N.
$$
Take the vertices of~$\Sigma_n$ as the vertices of the $n$th level of the graph~$\Gamma$; a~vertex~$u$ of level~$n$ precedes a~vertex~$v$ of level~$n+1$ if the projection~$\pi_{n+1,n}$ sends~$v$ to a~point of the simplex~$\Sigma_n$ for which the barycentric coordinate with respect to~$u$ is positive. As a~system of transition probabilities, we take the system of  vectors~$\{\lambda_v^u\}$ related to the projections~$\pi_{n+1,n}$ by formula~\eqref{eq7.1}.

In what follows, having a~projective limit of simplices, we will use the graph (of vertices of all simplices) canonically associated with it, the space of paths in this graph, etc., and in the same way will speak about projections of simplices canonically associated with an equipped graph.

Consider an arbitrary projective limit of finite-dimensional simplices:
$$
\Sigma_1 \leftarrow \Sigma_2 \leftarrow \dots\leftarrow
\Sigma_n\leftarrow \Sigma_{n+1}\leftarrow
\dots \leftarrow\Sigma_{\infty}\equiv \Sigma(\Gamma,\Lambda).
$$

First of all, we define limiting projections
$$
\pi_{\infty,m}\colon\Sigma_{\infty}\to \Sigma_m
$$
as the limits $\lim_n \pi_{n,m}$ for each~$m$: obviously, the images~$\pi_{n,m}\Sigma_n$, regarded as subsets in the simplices~$\Sigma_m$, monotonically decrease as~$n$ grows, and their intersections are some sets denoted by
$$
\Omega_m= \bigcap_{n:n>m} \pi_{n,m}\Sigma_n;
$$
these are convex closed subsets of the finite-dimensional simplices~$\Sigma_m$,
$m=1,2,\dots$, and the limiting projections send the limiting compactum~$\Sigma_{\infty}$ epimorphically on these sets:
$$
\pi_{\infty,m}\colon\Sigma_{\infty}\to \Omega_m.
$$

It would be more economical to consider also the projective limit
$$
\Omega_1\leftarrow \Omega_2 \leftarrow \dots\leftarrow \Omega_n
\leftarrow \dots\leftarrow \Omega_{\infty}=\Sigma(\Gamma,\Lambda)
$$
itself with the epimorphic projections~$\pi_{n,m}$ restricted to the subset~$\Omega_n$ and, by definition, with the same limiting space. However, to find the sets $\Omega_n$ explicitly is an interesting and difficult problem equivalent to the main problem of describing all invariant measures.\footnote{In particular, an explicit form of the compacta~$\Omega_n$ is known in very few cases, even among those where the central measures are known. Even for the Pascal graph, we obtain interesting and rather complicated convex compacta; but, for example, for the Young graph, the author does not know the form of these compacta with the same degree of precision.}

Every point of the limiting compactum, i.\,e.,  a~sequence
$$
\{x_m\}\colon x_m \in \Sigma_m,\quad
\pi_{m,m-1}x_m=x_{m-1},
$$
determines, for every~$m$, a~sequence of measures~$\{\nu_n^m\}_n$
on the simplex~$\Sigma_m$, namely,
$$
\nu_n^m=\pi_{n,m}(\mu_n),
$$
where the measure~$\mu_n$ is the (unique) decomposition of the point~$x_n$ into the extreme points of the simplex~$\Sigma_n$. Of course, the barycenter of each of the measures~$\nu_n^m$
in~$\Sigma_m$ is $x_m$, and this sequence of measures itself becomes coarser, in a~clear sense, and weakly converges in~$\Sigma_m$
as $n \to \infty$ to a~measure~$\nu_{x_m}$ supported by
$\Omega_m \subset \Sigma_m$. Obviously, in this way we obtain all points of the limiting compactum, i.\,e., all measures with given cotransition probabilities.

A point of an arbitrary convex compactum~$K$ is called \textit{extreme} if it cannot be represented by any nontrivial convex combination of points of~$K$; the collection of extreme points is called the Choquet boundary of~$K$ and denoted by~$\operatorname{ex}(K)$. A~point is called
\textit{almost extreme} if it lies in the closure~$\overline{\operatorname{ex}(K)}$ of the Choquet boundary. Recall that an affine compactum in which every point has a~unique decomposition into extreme points (over the Choquet boundary) is called a~Choquet simplex.

Let us give general criteria of extremality and almost extremality for points of a~projective limit of simplices.

\begin{proposition}
\label{pr9}
1) A~point~$\{x_n\}$ of a~projective limit of simplices is extreme if and only if for every~$m$ the weak limit of the measures~$\nu_{x_m}$
in the simplex~$\Sigma_m$ is the $\delta$-measure at the point~$x_m$:
$$
\lim_n \nu_n^m \equiv \nu_{x_m}=\delta_{x_m}.
$$

2) A~point $\{x_n\}$ is almost extreme if for every~$m$ and every neighborhood~$V(x_m)$ of the point $x_m \in \Sigma_m$ there exists an extreme point~$\{y_n\}$ of the limiting compactum for which $y_m \in V(x_m)$.

3) For every point~$\{x_n\}$ of the limiting compactum of simplices there exists a~unique decomposition into extreme points (Choquet decomposition), which is determined by the measures~$\nu_{x_m}$.
\end{proposition}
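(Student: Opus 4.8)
The plan is to realize $\Sigma_\infty$ concretely as the simplex $M_\Lambda$ of Borel probability measures on the path space $T(\Gamma)$ that agree with the equipment $\Lambda$, as already indicated in the text preceding the statement: a coherent sequence $\{x_n\}$ is exactly the sequence of level marginals of a unique such measure $\mathbb{P}_x$, and agreement with $\Lambda$ means precisely that $\mathbb{P}_x$ is a reverse Markov chain whose backward transition kernel from level $n$ to level $n-1$ is $\Lambda$. Under this identification the extreme points of $\Sigma_\infty$ are the ergodic measures, i.e.\ those whose tail $\sigma$-algebra $\mathcal{T}=\bigcap_n\mathcal{F}_n$ (with $\mathcal{F}_n=\sigma(\gamma_n,\gamma_{n+1},\dots)$, where $\gamma_k$ denotes the level-$k$ vertex of a random path) is trivial; I will use this standard boundary-theoretic fact. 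The key observation is that $\nu_n^m=\pi_{n,m}(\mu_n)$ is the law, under $\mathbb{P}_x$, of the $\Sigma_m$-valued random variable $\pi_{n,m}(\delta_{\gamma_n})$, namely the conditional distribution of $\gamma_m$ given $\gamma_n$.

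First I would prove part~1. By the reverse Markov property, for $m<n$ one has $\pi_{n,m}(\delta_{\gamma_n})=\mathbb{E}[\,\delta_{\gamma_m}\mid\mathcal{F}_n\,]$, so as $n\to\infty$ these form a reverse (downward) martingale indexed by the decreasing $\sigma$-algebras $\mathcal{F}_n$. By L\'evy's downward convergence theorem it converges $\mathbb{P}_x$-almost surely to $\mathbb{E}[\,\delta_{\gamma_m}\mid\mathcal{T}\,]$, the conditional law of $\gamma_m$ given the tail. Passing to laws, $\nu_{x_m}=\lim_n\nu_n^m$ is precisely the distribution of this tail-conditional law. Consequently $\nu_{x_m}=\delta_{x_m}$ for all $m$ if and only if the tail-conditional law of every $\gamma_m$ is almost surely the constant $x_m$; since the coordinates $\gamma_m$ generate the whole $\sigma$-algebra, this holds if and only if $\mathcal{T}$ is trivial, i.e.\ $\mathbb{P}_x$ is ergodic, i.e.\ $\{x_n\}\in\operatorname{ex}(\Sigma_\infty)$.

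For part~3 I would invoke Proposition~\ref{pr8}: $\Sigma_\infty$ is a Choquet simplex, so each point $x$ has a unique representing measure $P$ carried by $\overline{\operatorname{ex}(\Sigma_\infty)}$. This $P$ is the ergodic decomposition of $\mathbb{P}_x$, obtained by disintegrating $\mathbb{P}_x$ over the tail $\mathcal{T}$: for $P$-almost every extreme $y$ the component $\mathbb{P}_y$ is the ergodic measure with level-$m$ marginal $y_m=\mathbb{E}[\,\delta_{\gamma_m}\mid\mathcal{T}\,]$. Reusing the martingale limit from part~1, the law of $y_m$ under $P$ is exactly $\nu_{x_m}$, i.e.\ $\pi_{\infty,m}(P)=\nu_{x_m}$ for every $m$. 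Since $\Sigma_\infty\subset\prod_m\Sigma_m$ carries the product topology, a Borel measure on $\Sigma_\infty$ is determined by its images under the projections $\pi_{\infty,m}$; hence $P$ is uniquely determined by the family $\{\nu_{x_m}\}_m$, which is the asserted description of the Choquet decomposition. In particular, if all $\nu_{x_m}=\delta_{x_m}$ then every marginal of $P$ is a point mass, forcing $P=\delta_x$ and recovering the ``if'' direction of part~1.

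Finally, part~2 is topological. A basic neighborhood of $x=\{x_n\}$ in the product topology constrains finitely many coordinates $m_1<\dots<m_k$; by coherence $y_{m_i}=\pi_{m_k,m_i}(y_{m_k})$, and by continuity of the projections, constraining $y_{m_k}$ to a sufficiently small neighborhood of $x_{m_k}$ forces every $y_{m_i}$ into a prescribed neighborhood of $x_{m_i}$. Thus every neighborhood of $x$ in $\Sigma_\infty$ contains a set of the form $\{y:y_m\in V(x_m)\}$ for a single large $m$, and the hypothesis supplies an extreme point in it; hence $x\in\overline{\operatorname{ex}(\Sigma_\infty)}$, i.e.\ $x$ is almost extreme. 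The main obstacle I anticipate is making the boundary-theoretic input fully rigorous: identifying $\operatorname{ex}(\Sigma_\infty)$ with the ergodic ($\mathcal{T}$-trivial) measures, and justifying the downward martingale passage together with the disintegration over $\mathcal{T}$ (the Markov property of $\Lambda$-measures is what guarantees $\pi_{n,m}(\delta_{\gamma_n})=\mathbb{E}[\,\delta_{\gamma_m}\mid\mathcal{F}_n\,]$, and one must check that the weak limits of the laws $\nu_n^m$ indeed coincide with the law of the almost sure limit).
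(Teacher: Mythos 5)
Your route --- identify $\Sigma_\infty$ with the measures on $T(\Gamma)$ agreeing with the equipment, read extremality as triviality of the tail $\sigma$-algebra, and derive everything from reverse martingale convergence and the ergodic decomposition over the tail --- is exactly the proof the paper has in mind: Proposition~\ref{pr9} is stated there without argument, and immediately after Corollary~\ref{cor3} the author says the right way to see these facts is ``the probabilistic interpretation of simplices'' together with the uniqueness of the decomposition of measures with a given cocycle into ergodic components (citing Schmidt). Parts 1 and 2 are fine as you have them; for part 1, the step ``the coordinates generate the whole $\sigma$-algebra'' deserves one more line: constancy of $\mathbb{E}[\delta_{\gamma_m}\mid\mathcal{T}]$ for every $m$ kills the whole tail because, by the Markov property, $\mathbb{E}[g\mid\mathcal{T}]=\mathbb{E}[h(\gamma_m)\mid\mathcal{T}]$ for any cylinder function $g$ of the first $m$ coordinates, with $h$ determined by the fixed cotransitions.

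The one thing to repair is the opening of part 3: you cannot ``invoke Proposition~\ref{pr8}'' for uniqueness, because in the paper the simplex property asserted in Proposition~\ref{pr8} is explicitly deferred (``see below'') and is deduced from part 3 via Corollary~\ref{cor3}; as written, your argument is circular. Moreover, it is not automatic that an arbitrary representing measure $P$ carried by the extreme points satisfies $\pi_{\infty,m}(P)=\nu_{x_m}$: the barycenter condition only fixes the barycenter of $\pi_{\infty,m}(P)$ in $\Sigma_m$, not the measure itself. Both issues are resolved by the same observation, which is the honest content of ``reusing the martingale limit'': the map $\beta\colon\gamma\mapsto\{\lim_n\pi_{n,m}(\delta_{\gamma_n})\}_m$ is tail-measurable, takes values in $\Sigma_\infty$ by continuity of the $\pi_{m,m-1}$, and by your part 1 applied to each ergodic $\mathbb{P}_y$ satisfies $\beta=y$ $\mathbb{P}_y$-almost surely; hence if $\mathbb{P}_x=\int\mathbb{P}_y\,dP(y)$ with $P$ carried by $\operatorname{ex}(\Sigma_\infty)$, then $\beta_*\mathbb{P}_x=P$. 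Since $\beta_*\mathbb{P}_x$ is computed from $\mathbb{P}_x$ alone and has $m$-th marginal $\nu_{x_m}$, uniqueness follows (and your remark that a measure on $\Sigma_\infty$ is determined by its one-dimensional marginals is valid precisely because coherence makes $y_m$ a deterministic function of $y_n$ for $n>m$); existence is your disintegration over $\mathcal{T}$. With this rearrangement the proposal is a correct and complete version of the argument the paper only sketches.
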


\begin{corollary}
\label{cor3}
The limiting compactum of a~projective limit of fi\-nite-di\-men\-sion\-al simplices is a~Choquet simplex (in general, infinite-dimensional).
\end{corollary}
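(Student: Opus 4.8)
The plan is to read the corollary off directly from Proposition~\ref{pr9}, reducing everything to the definition of a Choquet simplex. Recall that a compact convex subset $K$ of a locally convex space is a Choquet simplex precisely when every point of $K$ is the barycenter of a \emph{unique} probability measure carried by the closure $\overline{\operatorname{ex}(K)}$ of its extreme points. By Proposition~\ref{pr8}, the limiting space $\Sigma_{\infty}$ is a nonempty compact convex subset of the locally convex space $\mathscr M=\prod_n\Sigma_n$ with the product topology, so the sole remaining task is to produce, for each point, such a unique representing measure. This is almost verbatim the content of part~3) of Proposition~\ref{pr9}, so the corollary should amount to transcribing that statement into the language of Choquet simplices.

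For existence I would use the family $\{\nu_{x_m}\}_m$ of weak limits $\nu_{x_m}=\lim_n\nu_n^m$ on the finite-dimensional simplices $\Sigma_m$ supplied by Proposition~\ref{pr9}. These are compatible with the projections: since $\pi_{m,m-1}(\nu_n^m)=\pi_{m,m-1}\pi_{n,m}(\mu_n)=\pi_{n,m-1}(\mu_n)=\nu_n^{m-1}$, continuity of the affine maps $\pi_{m,m-1}$ gives $\pi_{m,m-1}(\nu_{x_m})=\nu_{x_{m-1}}$ in the weak limit. Thus $\{\nu_{x_m}\}_m$ is a consistent system on the inverse system $\{\Sigma_m,\pi_{m,m-1}\}$, and by the standard existence theorem for measures on a projective limit of compact spaces it defines a Borel probability measure $\nu_x$ on $\Sigma_{\infty}$ whose projection to each $\Sigma_m$ is $\nu_{x_m}$ and whose barycenter is $x$. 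That $\nu_x$ is carried by the extreme points is exactly what the extremality criterion of part~1) of Proposition~\ref{pr9} certifies.

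For uniqueness I would take two representing measures $\mu,\mu'$ on $\Sigma_{\infty}$, both carried by $\overline{\operatorname{ex}(\Sigma_{\infty})}$ with barycenter $x$, and push them forward by the limiting projections $\pi_{\infty,m}$ to measures on $\Sigma_m$ with barycenter $x_m$. The decisive point is that each $\Sigma_m$ is a genuine finite-dimensional simplex, so barycentric decomposition into its vertices is unique; together with part~1), which constrains the images to sit on the relevant extreme structure, this forces $\pi_{\infty,m}(\mu)=\pi_{\infty,m}(\mu')=\nu_{x_m}$ for every $m$. Because the cylinder sets generate the Borel $\sigma$-algebra of the projective limit, a measure on $\Sigma_{\infty}$ is determined by its finite-dimensional projections, whence $\mu=\mu'$. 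Assembling existence, uniqueness, and Proposition~\ref{pr8} yields the definition of a Choquet simplex.

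The main obstacle I anticipate is the uniqueness step, and in particular making rigorous the passage from ``unique decomposition on every finite level $\Sigma_m$'' to ``unique decomposition on the infinite-dimensional limit.'' The finite-level uniqueness is automatic, but one must argue that no mass of a representing measure escapes detection at every finite level and that the limit measure really lands on $\overline{\operatorname{ex}(\Sigma_{\infty})}$ rather than merely on points that look extreme at each finite stage. This is precisely the content Proposition~\ref{pr9} is engineered to deliver, so once its three parts are in hand the corollary reduces to bookkeeping: transporting the finite-dimensional decompositions through the inverse-limit projections and recognizing the result as the defining property of a Choquet simplex.
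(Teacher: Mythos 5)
Your opening instinct is right and matches the paper: the corollary is stated there as an immediate consequence of part~3) of Proposition~\ref{pr9}, which already asserts both existence and uniqueness of the decomposition over the Choquet boundary; the paper adds only the remark that the substantive content behind that assertion is the uniqueness of the decomposition into ergodic components of measures with a given cocycle (the probabilistic reading of the simplex property, with a reference to Schmidt). Your existence step is also essentially sound: the compatibility $\pi_{m,m-1}(\nu_{x_m})=\nu_{x_{m-1}}$ together with the projective-limit theorem for Radon measures on compact spaces does produce a measure on $\Sigma_{\infty}$ with the prescribed marginals and with barycenter~$x$.

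The genuine gap is in your uniqueness step. You argue that because each $\Sigma_m$ is a finite-dimensional simplex, barycentric decomposition into its vertices is unique, and conclude that $\pi_{\infty,m}(\mu)=\pi_{\infty,m}(\mu')=\nu_{x_m}$. But the pushforward $\pi_{\infty,m}(\mu)$ of a measure carried by $\operatorname{ex}(\Sigma_{\infty})$ is a measure on $\Omega_m\subset\Sigma_m$ that is in general \emph{not} supported on the vertices of $\Sigma_m$: extreme points of the limit project to points of $\Omega_m$, typically interior points of the finite-dimensional simplex. Uniqueness of barycentric coordinates applies only to measures concentrated on the vertex set; two distinct probability measures on a triangle can perfectly well share a barycenter. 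So finite-level simpliciality gives you nothing, and the passage from ``same barycenter at every level'' to ``same projection at every level'' is exactly the nontrivial claim, not bookkeeping. To close it one needs the argument the paper alludes to after the corollary: disintegrate a representing measure over the tail $\sigma$-algebra of the path space (equivalently, run the reverse-martingale convergence that underlies part~1) of Proposition~\ref{pr9} almost everywhere) and invoke the uniqueness of the ergodic decomposition --- which is to say, one must actually prove Proposition~\ref{pr9}(3), since that is where the whole content of the corollary resides.
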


In the literature (see~\cite{98}), the question was discussed why a~limit of simplices is a~simplex; even in the finite-dimensional case, this problem is not quite obvious, it was raised by A.\,N.~Kolmogorov. The most correct way to explain this is to refer to the result, formally much more general but in fact equivalent (see, e.\,g.,~\cite{52}), on the uniqueness of a~decomposition of measures with a~given cocycle into ergodic components, i.\,e., use the probabilistic interpretation of simplices.

It is easy to prove that the converse is also true: every separable Choquet simplex can be represented as a~projective limit of finite-dimensional simplices, but, of course, such a~representation is not unique.

Slightly digressing from the main focus, note that the simplex of invariant measures with respect to an action of a~nonamenable group on a~compactum is separable, though its possible approximation is not generated by finite approximations of the action; thus there arises a~non-trajectory finite-dimensional approximation of the action, which, apparently, has not been considered in the literature.

\begin{remark}
\label{rem1}
The first two claims of Proposition~\ref{pr9} can perhaps be extended to projective limits of arbitrary convex compacta.
\end{remark}

Recall that the class  of separable Choquet  simplices contains
\textit{Poulsen simplices}, for which the set of extreme points is dense; such a~simplex is unique up to an affine isomorphism and universal in the class of all affine separable simplices (see~\cite{72} and the references therein).

\begin{proposition}
\label{pr10}
Consider a~projective limit of simplices with the following property: for every~$m$ the union of the projections
$$
\bigcup_{n,t}\{\pi_{n,m}(t); \ t\in \operatorname{ex}(\Sigma_n),\
n=m,m+1,\dots\}
$$
over all vertices of the simplices~$\Sigma_n$ and all $n>m$ is dense in the simplex~$\Sigma_m$. Then the limiting simplex is a~Poulsen simplex.
\end{proposition}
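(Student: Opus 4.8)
The plan is to reduce the statement to a density assertion about extreme points and then feed the hypothesis into the extremality criterion of Proposition~\ref{pr9}. By Corollary~\ref{cor3} the limit $\Sigma_{\infty}$ is already a Choquet simplex, so being Poulsen means exactly that $\operatorname{ex}(\Sigma_{\infty})$ is dense. I would first record the shape of the weak topology: since every point $\{x_n\}$ of $\Sigma_{\infty}$ is a coherent sequence, requiring the coordinate $x_m$ to lie in a small neighborhood $V(x_m)\subset\Sigma_m$ automatically controls all earlier coordinates, which are continuous images under $\pi_{m,m'}$, $m'<m$. Hence the sets $\{\{y_n\}:y_m\in V(x_m)\}$ form a neighborhood base at $\{x_n\}$, and density of $\operatorname{ex}(\Sigma_{\infty})$ is equivalent to saying that every point is almost extreme. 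By Proposition~\ref{pr9}(2) it therefore suffices to prove that for every $m$ the set $\pi_{\infty,m}(\operatorname{ex}(\Sigma_{\infty}))$ is dense in the image $\Omega_m=\pi_{\infty,m}(\Sigma_{\infty})$.

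Next I would extract a structural consequence of the hypothesis, namely that $\Omega_m=\Sigma_m$ for every $m$, so that the limiting projections $\pi_{\infty,m}\colon\Sigma_{\infty}\to\Sigma_m$ are onto. Indeed, the compacta $\pi_{n,m}(\Sigma_n)$ decrease to $\Omega_m$; given $p\in\Sigma_m$, density of the vertex-projections produces $\pi_{n_j,m}(\delta_{v_j})\to p$, and either $n_j\to\infty$, whence $p\in\Omega_m$ because $p$ eventually lies in every $\pi_{n,m}(\Sigma_n)$, or the levels stay bounded, whence $p$ is one of only finitely many vertex-projections, the graph being locally finite. Since $\Sigma_m$ is uncountable when positive-dimensional and $\Omega_m$ is closed, this forces $\Omega_m=\Sigma_m$, and a decreasing intersection of compacta equal to the whole simplex means $\pi_{n,m}(\Sigma_n)=\Sigma_m$ for each $n$. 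This surjectivity lets me lift any prescribed target at level $m$ into $\Sigma_{\infty}$.

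The core step is the passage from density of \emph{finite-level} extreme points, the vertices $\delta_v$ furnished by the hypothesis, to density of the projections of genuine extreme points of $\Sigma_{\infty}$. Here I would use the ergodic/martingale characterization of Proposition~\ref{pr9}(1): $\{y_n\}$ is extreme exactly when $\nu_n^m\to\delta_{y_m}$, i.e.\ when, for large $n$, the mass of $y_n$ sits on vertices whose projection $\pi_{n,m}(\delta_v)$ is close to $y_m$. Fixing a target $p\in\Sigma_m$ and $\varepsilon>0$, I would build a coherent sequence $\{y_n\}$ by an inductive diagonal procedure: at each new level $k$ I use density of the vertex-projections in $\Sigma_k$ together with the surjectivity above to choose $y_k$ supported on a tight cluster of vertices all of whose projections to the previously fixed levels lie within shrinking tolerances, thereby keeping $y_m$ within $\varepsilon$ of $p$ while simultaneously driving the measures $\nu_n^m$ toward a point mass. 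The limit is then an extreme point with $m$-marginal near $p$, which yields the required density.

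The main obstacle is precisely this last construction. A single vertex mass $\delta_v$ typically does \emph{not} lift to $\Sigma_{\infty}$, since a coherent extension with $y_n=\delta_v$ exists only when $v$ has a successor for which it is the unique predecessor, which usually fails; and extending $v$ upward along a path in the naive ergodic-method fashion lets the lower marginal $\pi_{n,m}(\delta_{v_n})$ drift away from $\pi_{N,m}(\delta_v)$. The delicate point is thus to reconcile two competing demands at once, pinning the level-$m$ marginal near the target and forcing the concentration $\nu_n^m\to\delta_{y_m}$ that certifies extremality, which calls for careful back-and-forth bookkeeping across all levels rather than a single upward extension. Once this is arranged, every point of $\Sigma_{\infty}$ is almost extreme, so $\operatorname{ex}(\Sigma_{\infty})$ is dense and $\Sigma_{\infty}$ is a Poulsen simplex.
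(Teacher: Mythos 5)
Your reductions are sound: the observation that neighborhoods of a point of $\Sigma_{\infty}$ are controlled by a single coordinate, the equivalence of the Poulsen property with density of $\pi_{\infty,m}(\operatorname{ex}(\Sigma_{\infty}))$ in $\Omega_m$ for every $m$, and the deduction that the hypothesis forces $\pi_{N,m}(\Sigma_N)=\Sigma_m$ for all $N\geqslant m$ (hence $\Omega_m=\Sigma_m$) are all correct. But the proposal stops exactly where the proof has to be given: the passage from density of the vertex projections $\pi_{n,m}(\delta_v)$ to density of the projections of genuine extreme points of $\Sigma_{\infty}$ is only announced (``once this is arranged\dots''), and the back-and-forth diagonal construction you describe is never carried out. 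As written this is a proof plan with its central step missing, so it does not yet establish the proposition. (For comparison, the paper itself disposes of that step with the single remark that the extremality criterion ``obviously implies'' density of extreme points, so your reduction is in the spirit of the intended argument; the burden of the unproved step is the same in both texts.)

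The gap closes much more easily than your last paragraph suggests, with no delicate bookkeeping. You have already shown $\Omega_n=\Sigma_n$, i.e., $\pi_{\infty,n}\colon\Sigma_{\infty}\to\Sigma_n$ is onto; so for each vertex $v$ of $\Sigma_n$ there is some $\mu\in\Sigma_{\infty}$ with $\pi_{\infty,n}(\mu)=\delta_v$. Take the Choquet decomposition $\mu=\int y\,d\nu(y)$ over $\operatorname{ex}(\Sigma_{\infty})$ given by Proposition~\ref{pr9}, part~3). Since $\pi_{\infty,n}$ is affine and continuous, $\delta_v$ is the barycenter of the push-forward measure $(\pi_{\infty,n})_*\nu$ on $\Sigma_n$; and because $\delta_v$ is an extreme point of the finite-dimensional simplex $\Sigma_n$, its only representing measure is the point mass, so $(\pi_{\infty,n})_*\nu=\delta_{\delta_v}$. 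Hence $\nu$-almost every extreme point $y$ in the decomposition satisfies $y_n=\delta_v$ exactly, and in particular at least one does. For such a $y$ one has $y_m=\pi_{n,m}(y_n)=\pi_{n,m}(\delta_v)$, so every vertex projection is literally the $m$-th coordinate of an extreme point of $\Sigma_{\infty}$ --- no lifting of $\delta_v$ through all higher levels and no approximation are needed. Combined with the hypothesis that these vertex projections are dense in $\Sigma_m$, this gives density of $\pi_{\infty,m}(\operatorname{ex}(\Sigma_{\infty}))$ for every $m$, hence by Proposition~\ref{pr9} every point of $\Sigma_{\infty}$ is almost extreme and the limit is a Poulsen simplex. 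In probabilistic language this is just the ergodic decomposition: a measure giving full mass to the cylinder over $v$ has ergodic components with the same property.
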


It is clear that this construction can be carried out by induction, and the extremality criterion obviously implies that in this case the extreme points are dense. With such arbitrariness in the construction, the uniqueness seems surprising. Nevertheless, verifying that a~given projective limit is a~Poulsen simplex is not so easy and is similar to problems related to filtrations.

A simplex whose Choquet boundary is closed is called a~ \textit{Bauer simplex}. Between the classes of Bauer and Poulsen simplices, there are many intermediate types of simplices. In the literature on convex geometry and the theory of invariant measures, this has been repeatedly discussed. However, it seems that these and similar properties of infinite-dimensional simplices in application to projective limits and the theory of graded graphs and corresponding algebras have never been considered. Each of these properties has an interesting interpretation in the framework of these theories. The author believes that the following class of simplices (or even convex compacta) is useful for applications: an \textit{almost Bauer simplex} is a~simplex whose Choquet boundary is open in its closure.

Parallelism between the study of pairs $(\Gamma,\Lambda)$ (a graded graph and an equipment of this graph) on the one hand and of projective limits of simplices on the other hand means that the latter has a~probabilistic interpretation. It is useful to describe this interpretation directly. Recall that in the context of projective limits a~path is a~sequence~$\{t_n\}_n$ of vertices ${t_n \in \operatorname{ex}(\Sigma_n)}$
that agrees with the projections in the following way: the point $\pi_{n,n-1}t_n$
for all $n\in {\mathbb N}$   has a~nonzero barycentric coordinate with respect to the vertex~$t_{n-1}$. First of all, every point $x_{\infty} \in \Sigma_{\infty}$ of the limiting simplex
 is a~sequence of points of simplices that agrees with the projections:
$$
\{x_n\}\colon \pi_{n,n-1}x_n=x_{n-1},\quad
n \in \mathbb N.
$$
As an element of a~simplex,~$x_n$ defines a~measure on the vertices of this simplex, and since all these measures agree with the projections,~$x_{\infty}$ defines a~measure~$\mu_x$ on the path space with fixed cotransition probabilities. Conversely, each such measure defines a~sequence of projections. Thus the limiting simplex is the simplex of all measures on the path space with given cotransitions.

If a~point $\mu\in \operatorname{ex}(\Sigma_{\infty})$ is extreme, this means that the measure~$\mu$ is ergodic,
 i.\,e., the tail $\sigma$-algebra is trivial with respect to the measure~$\mu$ on the path space.

We end this section with the following conclusion.

\textit{The theory of equipped graded multigraphs and the theory of Markov compacta with systems of cotransitions are identical to the theory of Choquet simplices regarded as projective limits of finite-dimensional simplices. All three theories can be viewed as theories of locally finite filtrations in a~fixed basis representation}.

\subsection{The fundamental problem: description of the absolute}
\label{ssec7.2}
\begin{problem}[Main problem]
Given a~projective limit of finite-dimensional simplices, find the Choquet boundary of the limiting simplex.

It will be convenient to call this boundary (in the context of path spaces of graded graphs) the absolute of a~projective limit (of a~Markov chain, a graded graph, etc.). Sometimes the absolute consists of a~single point; in this case it is trivial, but this happens very rarely.

By above, this problem is equivalent to the following one:
\begin{enumerate}
\item[--] given a~Markov compactum with a~system of cotransitions, describe all ergodic measures that agree with these cotransitions,
\end{enumerate}
or
\begin{enumerate}
\item[--] given a~graded graph with an equipment, describe all ergodic measures on its path space that agree with this equipment,
\end{enumerate}

\noindent
or, finally,

\begin{enumerate}
\item[--] given a~filtration on a~Borel space, describe all ergodic Borel measures that agree with the cocycle of this filtration (see Sec.~\ref{sec3}).
\end{enumerate}
\end{problem}

Actually, as we have seen, all these problems are tautologically equivalent, but still they belong to different areas of mathematics. The formulation with Markov chains is essentially due to E.\,B.~Dynkin (see~\cite{11}).\footnote{In~\cite{11}, the terms ``exit boundary,'' ``entrance boundary,'' etc.\ were used. Note that the Martin boundary can also be easily defined in our context, see~\cite{78}.}

For graphs with the canonical equipment, this is the problem of describing the central measures, or traces of
$\operatorname{AF}$-algebras, or characters of locally finite groups; besides, this is also the problem of describing the invariant measures for actions of countable groups, provided that an adic approximation is chosen. This is an enormous circle of problems which covers a~substantial part of asymptotic representation theory, combinatorics, algebra, and, of course, the theory of dynamical systems.

In the case of filtrations, fixing some ergodic measure allows one to regard a~filtration as a~filtration of a~measure space (rather than of merely a~Borel space), with all the problems considered above, whose solution substantially depends on the choice of a~specific measure. But the absolute, in a sense, combines different measures with respect to which one can consider filtrations, equivalence relations, adic actions, etc. Thus the problem is not only to describe the absolute. Here is a~list of natural important additional questions (we state them in terms of graded graphs).

The setting of the problem can be, for instance, as follows:

-- On which graded graphs every measure from the absolute determines a~standard tail filtration on the path space endowed with this measure (it is natural to call such graphs standard)?

-- On which graphs there exists a~measure from the absolute with a~standard tail filtration?

-- On which graphs the absolute contains no nontrivial measures with a~standard tail filtration?

The same series of questions can be posed in connection with entropy and other invariants of filtrations. From the point of view of the metric theory of filtrations and the theory of Markov chains, these questions look especially unconventional, since we ask to what extent can measures with the same conditional (cotransition) probabilities be dissimilar.

\subsection{Several examples. Relation to the theory of locally finite filtrations}
\label{ssec7.3}
We choose several examples of problems of describing the absolute.

1. \textsl{Absolutes of groups}. Consider a~countable finitely generated group~$G$ and choose a~symmetric system of generators: $S=S^{-1}$. By the
\textit{dynamical Cayley graph}, or the \textit{Markov compactum of the random walk}, we mean the graph in which the vertices of level~$n$ are all elements of~$G$ representable as words of length at most~$n$, and edges correspond to the right multiplication by generators. The equipment is canonical,
i.\,e., the set of word representations of a~given element is endowed with the uniform measure. The problem of describing the absolute,
i.\,e., the set of ergodic central measures, is a~considerable extension of the problem of describing the Poisson--Furstenberg boundary of a~random walk. While the Poisson--Furstenberg boundary reduces to a~single point for many groups, the absolute does it very rarely; it has a~structure of a~generalized fiber bundle over the Poisson--Furstenberg boundary. For the free group, the absolute is calculated in~\cite{90}. In this case,  a~natural phase transition from ergodicity to nonergodicity  is discovered. Presently, the absolute of nilpotent groups is under study. A~quite useful tool here is a~generalization of the ergodic method (see~\cite{82}), which reduces finding the central measures to establishing their additional symmetry properties. In this sense, describing the absolute can be viewed as a far-reaching generalization of de~Finetti's theorem on measures invariant with respect to groups of permutations. In all group examples considered so far, the filtrations turned out to be standard, i.\,e., the dynamical graph can be called standard.

2. On the contrary, graphs arising in the approximation theory of dynamical systems
(i.\,e., graphs providing adic realizations of automorphisms with invariant measure) are often nonstandard.

\begin{conjecture}
\label{con2}
A graph providing an adic realization of an automorphism with positive entropy is not standard. The corresponding limiting simplex is a~Poulsen simplex.
\end{conjecture}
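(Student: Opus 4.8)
The statement splits into two essentially independent assertions, and I would attack them with different tools. For the nonstandardness claim I would use the martingale/combinatorial standardness criterion (Theorems~\ref{th7}, \ref{th9}, \ref{th10}) in its negative form together with the entropy invariant of Sec.~\ref{ssec6.5}. The plan is first to realize the automorphism $T$ with $h(T)>0$ as the tail filtration $\tau$ on the path space of the equipped graph $\Gamma$ that provides its adic realization, taken with respect to the central measure $\mu$ corresponding to the invariant measure of $T$ (the setup recalled in Sec.~\ref{ssec4.2}). By Proposition~\ref{pr7} the sequence of tree metrics $d^n_{\rho}$ either collapses in measure to a degenerate metric or has no limit, so it suffices to exhibit a positive lower bound, uniform in $n$ under the correct normalization, for the $\epsilon$-entropy of the metric measure spaces $(X/\xi_n, d^n_{\rho}, \mu_{\xi_n})$. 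I would derive such a bound from $h(T)>0$ via the results relating the secondary (scaled) entropy of a filtration to the metric entropy of the underlying transformation — proved for the $(T,T^{-1})$ walk in \cite{22}, \cite{23} and for adic automorphisms on the graph of ordered pairs in \cite{91} — which should give that the scaled entropy of $\tau$ is comparable to $h(T)$, hence strictly positive, so the tree metrics cannot collapse and $\tau$ is nonstandard.

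For the Poulsen assertion I would verify the density criterion of Proposition~\ref{pr10}: it suffices to show that for every level $m$ the union over $n>m$ of the projections $\pi_{n,m}(t)$, $t\in\operatorname{ex}(\Sigma_n)$, is dense in $\Sigma_m$. The mechanism I expect to exploit is that positive entropy forces exponential growth of the vertex sets $\Gamma_n$ and of the numbers of paths through each vertex, so there is enough combinatorial room to steer the ergodic (extreme central) measures supported on higher levels so that their level-$m$ projections fill out the compactum $\Omega_m\subset\Sigma_m$. Concretely, given a target point $x\in\Sigma_m$ and a neighborhood $V(x)$, I would construct an ergodic central measure whose $m$-th projection lies in $V(x)$ by prescribing suitable cotransition behavior on a long intermediate block of levels, using that the exponentially many vertices allow arbitrary finite convex combinations to be approximated; the uniqueness of the Choquet decomposition supplied by Proposition~\ref{pr9} would then certify that the approximants can be taken extreme.

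The main obstacle, in both parts, is to convert the analytic positive-entropy hypothesis into effective combinatorial estimates on $\Gamma$. For nonstandardness this means making precise, and extending beyond the special cases of \cite{22}, \cite{23}, \cite{91}, the comparison between $h(T)$ and the scaled entropy of a general adic filtration; those results are established only for particular graphs, and the author explicitly notes that they should be extended. For the Poulsen claim the hard point is that the compacta $\Omega_m=\bigcap_{n>m}\pi_{n,m}\Sigma_n$ are in general not known explicitly (as stressed in the footnote to Sec.~\ref{ssec7.1}), so the density of the projected extreme points must be proved without an explicit description of the target simplex — presumably through a self-refining construction in the spirit of Proposition~\ref{pr10} combined with the entropy-driven growth of the graph. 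This is precisely where I expect the genuine difficulty to lie, and the reason the statement remains a conjecture.
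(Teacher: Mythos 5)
The statement you are trying to prove is stated in the paper as a \emph{conjecture} (Conjecture~\ref{con2}); the paper offers no proof of it, so there is nothing to compare your argument against, and your own closing admission is accurate: what you have written is a research plan, not a proof. The plan does correctly identify the machinery the author intends for this circle of questions (the standardness criterion in its metric form, Proposition~\ref{pr7}, scaled/secondary entropy from Sec.~\ref{ssec6.5}, and the Poulsen density criterion of Proposition~\ref{pr10}), and it correctly locates the two genuine obstructions. But both of your key steps are exactly the open problems, not lemmas you can cite. First, the comparison between $h(T)$ and the scaled entropy of the tail filtration is established in the literature only for very particular models --- the $(T,T^{-1})$ random-walk filtrations of~\cite{22},~\cite{23} and certain measures on the graph of ordered pairs in~\cite{91} --- and these are not adic realizations of a general positive-entropy automorphism. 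Since an automorphism admits many inequivalent adic realizations, you would need a lower bound on the scaled entropy of the tail filtration that is uniform over all graphs realizing $T$ adically, and no such theorem exists; the paper explicitly says these results ``should be extended to arbitrary groups'' and graphs. Your reduction of nonstandardness to ``the tree metrics cannot collapse because scaled entropy is positive'' is therefore circular at the decisive point.

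Second, for the Poulsen assertion there is a conceptual gap beyond the technical one you name. Positive entropy is a property of \emph{one} ergodic central measure on the path space, whereas the Poulsen property is a statement about the global geometry of the \emph{entire} simplex $\Sigma_\infty$ of central measures. Exponential growth of the level sets $\Gamma_n$ does not by itself give density of the projected extreme points in $\Sigma_m$ (or in $\Omega_m$): the cotransition probabilities are fixed by the canonical equipment, so you are not free to ``prescribe suitable cotransition behavior on a long intermediate block of levels'' --- you may only choose which ergodic central measure to project, and the set of such measures is precisely the unknown object. Without an explicit handle on $\Omega_m=\bigcap_{n>m}\pi_{n,m}\Sigma_n$, the self-refining construction you gesture at has no starting point. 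In short: the proposal is a sensible map of the territory and would be a fine opening paragraph of a paper attacking the conjecture, but every load-bearing step remains unproved, which is consistent with the statement's status in the source.
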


The graph of ordered pairs (see Sec.~\ref{sec4}) has many central measures with respect to which the tail filtration is not standard (see~\cite{91}).

Another, more interesting, example of this type is the  ``tower of measures'' graph.

This is the graph of unordered pairs, which appeared in 1969 as a~tower of binary measures. The simplest way to introduce this graph is to define it as a~projective limit of simplices in which the set of vertices of each simplex is the set of all vertices and middle points of all edges of the previous simplex, the initial simplex being the interval $[0,1]$
(see Fig.~\ref{fig11}). The limiting  simplex provides nice interpretations of invariants of sets with respect to nonstandard dyadic filtrations. Moreover, the definition of the tower of measures was motivated by an interpretation of the standardness criterion (more exactly, an interpretation of the values of the universal projection of a~set) for nonstandard situations. The construction of the tower of measures is related to
 constructions of universal spaces in the set-theoretic and algebraic topology.

\begin{figure}[h!]

\centering
 \includegraphics{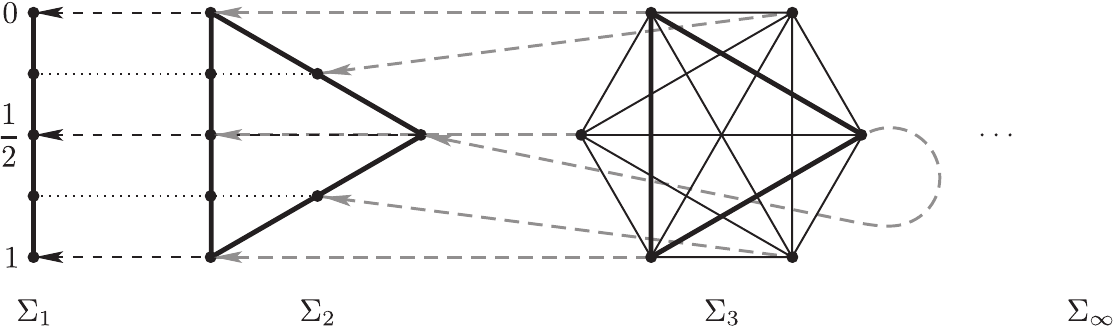}
\caption{The beginning of the tower of measures}
\label{fig11}
\end{figure}

3. An enormous supply of graded graphs, and hence of absolutes, filtrations and their invariants, is provided by the representation theory of $\operatorname{AF}$-algebras and locally finite groups. Each of these objects gives rise to its own Bratteli diagram, the canonical equipment determines a~projective limit of simplices and the limiting simplex of central measures, etc. These links have been studied (but yet without taking into account tail filtrations) both in Borel path spaces and in path spaces endowed with central measures. We only mention that the computation of the absolute,
i.\,e., of the traces and characters, looks isolated in each specific case; up to now, there no sufficiently general methods for solving this problem. For example,  one has not yet managed to prove
the well-known Kerov's conjecture~\cite{30, 31, 32} on central measures in the Macdonald graph, though it differs little from known solved problems. An attempt to separate solvable problems related to the absolute from ``intractable'' ones,  i.\,e., those that  have  no reasonable solution, was made in~\cite{77},~\cite{79},~\cite{82}; it is also reproduced in part in this paper. This is the reason for bringing the notion of standardness of filtrations into these problems. In the papers mentioned above (in more detail, in~\cite{82}), the so-called internal metric on paths of a~graph is introduced, whose role is exactly to separate the two cases. For want of space, we postpone a~further analysis of these relations to future publications.

\section{Historical commentary}
\label{sec8}

In conclusion of this survey, it is appropriate to add several remarks on the history of the problem and some papers, examples, and counterexamples useful in the study of filtrations.

1. In~1958, the famous A.\,N.~Kolmogorov's paper~\cite{34} was published which contained a~definition of entropy as an invariant of the metric theory of dynamical systems. In that paper, Kolmogorov defined entropy in the spirit of Shannon's concept of information transmission and rested on a~filtration (in the old terminilogy, the ``sequence of past $\sigma$-algebras'') that he had often considered  earlier, acknowledging the importance of Rokhlin's theory of partitions used in his considerations. As one can deduce from recollections of various persons, in an earlier lecture course, given by A.\,N.\ in 1957 and, unfortunately, lost, he apparently proved that Bernoulli schemes with different entropies are nonisomoprhic using a~combinatorial description of entropy, which, due to the subsequent definition  suggested by Ya.\,G.~Sinai~\cite{53}, later became the main one. The more so since this definition was at once easily extended to arbitrary groups.\footnote{Here one should mention the master thesis by D.\,Z.~Arov (Odessa University, 1957), mentioned by A.\,N.\ in his second paper~\cite{35} and published only recently (see~\cite{1}), which contains a~combinatorial definition of an invariant of an automorphism close to Sinai's definition. A~full comparison of Kolmogorov's entropy and Arov's entropy was carried out by B.\,M.~Gurevich~\cite{20}.} Kolmogorov's definition from the first paper was much closer to the traditional theory of stationary processes and Shannon's ideas. But a~small error forced A.\,N.\ to write the second paper (see~\cite{35}). Here we quote a~fragment and a~footnote from this second paper by A.\,N., in which the error contained in the first paper was corrected,\footnote{For some reason, the publishers of A.\,N.'s collected works decided not to publish this paper in full, but to combine a~small part of it with the first paper.} since this is directly related to difficulties of the theory of filtrations.

``V.\,A.~Rokhlin pointed out that the proof of Theorem~2 in my paper~(1) tacitly uses the following assumption.
It follows from
$$
{\mathfrak A}_1 \supseteq {\mathfrak A}_2 \supseteq \cdots \supseteq
{\mathfrak A}_n \supseteq\dotsb,\qquad
\bigcap_n {\mathfrak A}_n={\mathfrak N},
$$
that
$$
\bigcap_n({\mathfrak B}\vee{\mathfrak A}_n)=\mathfrak B.\textrm{''}
$$

``By the courtesy of V.~A.~Rokhlin, I reproduce this example. Let
${\mathfrak G}^m$ be the additive group of numbers of the form~$\alpha m^{-\beta}$
($m$~is a~positive integer, $\alpha$~is an integer,
$\beta$~is a~nonnegative integer).
Denote by~$U$ the automorphism of~${\mathfrak G}^6$ that consists in dividing by~$6$; by $M$, the group of characters of~${\mathfrak G}^6$; and by $T$, the automorphism of~$M$ conjugate to~$U$. The subgroups~${\mathfrak G}^2$,~${\mathfrak G}^3$
of~${\mathfrak G}^6$ satisfy the following obvious relations:
\begin{alignat*}{3}
{\mathfrak G}^2&\subset U{\mathfrak G}^2, &\quad
\bigvee_n U^n{\mathfrak G}^2&={\mathfrak G}^6,&\quad
\bigcap_n U^n{\mathfrak G}^2&=0,
\\*
{\mathfrak G}^3&\subset U{\mathfrak G}^3, &\quad
\bigvee_n U^n{\mathfrak G}^3&={\mathfrak G}^6,&\quad
\bigcap_n U^n{\mathfrak G}^3&=0.
\end{alignat*}

Therefore, the corresponding subalgebras~${\mathfrak G}^2$,~${\mathfrak G}^3$
of the algebra~${\mathfrak G}^6$ of measurable sets of the space~$M$ satisfy the quasi-regularity conditions. At the same time,~${\mathfrak G}^2$ has index~3 in~$U{\mathfrak G}^2$ and~${\mathfrak G}^3$ has index~2 in~$U{\mathfrak G}^3$, so
$$
MH(T{\mathfrak G}^2|{\mathfrak G}^2)=\lg 3, \qquad
MH(T{\mathfrak G}^3|{\mathfrak G}^3)=\lg 2.
$$

Here
$$
\bigcap_n({\mathfrak G}^2\vee T^n{\mathfrak G}^3)\ne {\mathfrak G}^2,\qquad
\bigcap_n({\mathfrak G}^3\vee T^n{\mathfrak G}^2)\ne
{\mathfrak G}^3.\textrm{''}
$$

This example shows the lack of continuity when passing from a~filtration to its supremum with a~given partition. It is interesting that this difficulty is absent in the theory of increasing sequences of partitions ($\sigma$-algebras). I mention this also because the theory of invariant partitions (in our terms, the theory of stationary filtrations) became an object of interest for Rokhlin,  Sinai, and many subsequent authors (see~\cite{47}). In our view, this deep subject is not yet exhausted, and its relations with the theory of groups of unitary or Markov operators, with scattering theory should be studied systematically. On the other hand, as one can see from the survey, the applicability of the theory of filtrations is much wider than the stationary case.

2. The question of whether there exists an ergodic filtration finitely isomorphic, but not isomorphic to a~Bernoulli one appeared as early as in the 1950s, e.\,g., in the work by M.~Rosenblatt (see~\cite{49}). In slightly different terms, it was discussed in N.~Wiener's book \textit{Nonlinear Problems in Random Theory}~\cite{97} (chapters \textit{Coding} and \textit{Decoding}). Actually, \textit{Decoding} contains a~negative answer to this question along with tedious calculations. More exactly, it is claimed there that if a~stationary Gaussian random sequence~$\alpha_n$, $n<0$, has the property that for every~$n$, for the $\sigma$-algebra~${\mathfrak A}_n$ generated by the variables with indices~$<n$, there exist~$|n|$ Gaussian random variables independent of each other and independent of~${\mathfrak A}_n$ that together with~${\mathfrak A}_n$ generate the $\sigma$-algebra of all measurable sets, then the original stationary sequence is isomorphic to a~one-sided Bernoulli shift. However, this is not true, since there exist nonstandard (and hence nonisomorphic to Bernoulli ones) stationary filtrations.

I must say that in my first paper~\cite{58} I also claimed (for dyadic filtrations) that nonstandard filtrations do not exist (without giving any arguments or calculations to support this claim), but soon~\cite{59} corrected this error and suggested the first series of examples of nonstandard dyadic sequences, as well as the standardness criterion. Thus classification of filtrations became a~meaningful problem.

3. Speaking about the prehistory of the theory of filtrations, one should begin with the remark that the problem of analyzing the structure of stationary one-sided filtrations was raised in the 1960s by V.\,A.~Rokhlin in the context of the study of endomorphisms (noninvertible measure-preserving transformations): in this field, many important results are due to him (see~\cite{48}). He attracted to it B.\,G.~Vinokurov and his  Tashkent pupils (B.\,A.~Rubshtein, N.\,N.~Ganikhodzhaev, etc.), who published a~series of interesting papers on this subject
(see~\cite{95},~\cite{94},~\cite{50},~\cite{51}). On the other hand, a~direct necessity to study homogeneous (e.\,g., dyadic) filtrations appeared later in connection with the problem of trajectory isomorphism of automorphisms, also raised by Rokhlin irrespectively of filtrations. It was successfully studied by R.\,M.~Belinskaya~\cite{2}. In fact, by that time, the positive answer to the problem of trajectory isomorphism of all ergodic transformations with invariant measure had been already obtained by H.~Dye~\cite{8}, a~fact that became known to the participants of Rokhlin's seminar on ergodic theory only later, when I proved the theorem on lacunary isomorphism, which immediately implied Dye's theorem. The techniques used in~\cite{8} were based on the theory of $W^*$-algebras, in contrast to an approximation (essentially combinatorial) proof in~\cite{58},~\cite{2}. Much later, the same result was proved for all actions of amenable groups with invariant measure~\cite{43},~\cite{3}. I was interested in trajectory theory in connection with the theory of factors even earlier, and some results for the more general case of quasi-invariant measures were obtained simultaneously with results of W.~Krieger and A.~Connes. For set-theoretic intersections of filtrations, I used the term ``locally measurable partitions,'' V.\,A.~Rokhlin suggested the term ``tame partitions,'' and the generally accepted term is ``hyperfinite partitions.''

4. Similar problems have also been raised in probability theory itself, long ago and repeatedly. For instance, the question of what stationary processes can be obtained by encoding Bernoulli schemes was studied by M.~Rosenblatt, K.~It\^o~\cite{45}, and many others. The clearest formulation of the problem was developed in the framework of information theory and ergodic theory: what automorphisms or endomorphisms are factors of Bernoulli automorphisms? For autormophisms, the problem was solved in the remarkable paper by D.~Ornstein~\cite{42}, who proved that a~necessary and sufficient condition is the condition incongruously called VWB (very weak Bernoulli), while it would be more appropriate to call it ``Ornstein's condition.'' For endomorphisms, it is not yet completely solved. Ornstein's condition resembles in nature the standardness condition, which appeared in another connection in approximately the same time~\cite{59},~\cite{66}. The similarity is in the fact that both are requirements on the behavior of the collection of conditional measures of finite segments of a~process given a~fixed past starting from some time, namely, one requires that these conditional measures approach one another as this time goes to~$-\infty$. Obviously, any interpretation of a~weakening of the independence condition can be expressed exactly in these terms, but still this method differs from various conditions of weak dependence which existed earlier in traditional schemes of the theory of random processes (Kolmogorov's, Rosenblatt's, Ibragimov's conditions, etc.)\ and are usually stated as a~decrease of correlation coefficients, measure densities, etc. But in our approach, the leading role is played by the system of conditional measures and a~metric that controls the convergence of systems of conditional measures. In Rosenblatt's condition, this is the Kantorovich metric (rediscovered by him specifically in this case) constructed from the Hamming metric on trajectories. In the case of standardness, this is a~modified Kantorovich metric taking into account the (tree-like) structure of conditional measures. The difference between them is substantial, but the problems and classes of processes are also different: Ornstein's condition concerns stationary processes on~$\mathbb Z$ (though is stated for their restrictions to~${\mathbb Z}_+$) and deals with the properties of a~two-sided process, while standardness characterizes the properties of a~one-sided filtration.\footnote{Later, D.~Ornstein and B.~Weiss~\cite{43},~\cite{44}, D.~Rudolph, and others extended this theory from the group~$\mathbb Z$ to actions of arbitrary amenable groups with invariant measure, but this generalization has no parallel in the theory of filtrations in the form it is used for the group~$\mathbb Z$
(the existence of a~notion of ``past''). However, filtrations that arise as a~tool in trajectory approximation (see one of the examples of filtrations in Sec.~\ref{sec1}) are undoubtedly related also to Ornstein's condition for general groups.}

It is worth mentioning that instead of the Hamming metric (on trajectories), other metrics have also been considered, and the Kantorovich metric on measures constructed from a~metric on trajectories can also be replaced with other ones: there is a~wide room for experiments and analysis of asymptotic properties of processes and filtrations depending on metrics.

It seems that the most important idea is that of measuring the degree of ``being non-Bernoulli'' and ``being nonstandard'' with the help of what was called ``secondary entropy'' and higher zero--one laws.

\section{Acknowledgements}
\label{sec9}

The author must remember and thank the people with whom he discussed the subject of this paper many years ago. First of all, this is V.\,A.~Rokhlin, one of my teachers, who posed a~series of stimulating questions on similar subjects and who was very attentive and sympathetic to my ideas in this field (see~\cite{47},~\cite{48}); A.\,N.~Kolmogorov, who was interested in filtrations and communicated several my papers on this subject to \textit{Proceedings of the USSR Academy of Sciences}; R.~Belinskaya, who asked me one of the first specific questions about dyadic filtrations; S.~Yuzvinsky, who took an active interest in the subject and wrote an important paper on applications of the scale of an automorphism~\cite{99}.

In those years, I had useful fierce disputes with Ya.~Sinai, B.~Gurevich, V.~Alexeev, A.~Katok, S.~Yuzvinsky, A.~Stepin, V.~Vinokurov, B.~Rubshtein, L.~Abramov, M.~Gordin, I.~Ibragimov, and others, as well as with A.~Kirillov, who suggested a~meaningful comparison of this subject to some previous works.

Later, after a~long pause, the interest to the subject was revived abroad; M.~Yor's question about filtrations related to the Brownian motion (see also~\cite{57}) was partially solved in~\cite{7}, where my standardness criterion was rediscovered. A~remarkable flurry of activity in the theory of filtrations was observed in 1997--1998 during the semester on ergodic theory at the Israel Institute for Advanced Studies (Jerusalem): H.~Furstenberg, B.~Weiss, J.~Feldman, D.~Rudolph, D.~Hoffman, D.~Heicklen,
J.-P.~Thouvenot published a~number of papers and participated in discussions of this subject. In the 2000s, M.~\'Emery, who gave a Bourbaki~talk~\cite{12} on this and related subjects, his students W.~Schachermayer and others also worked in the theory of filtrations. In the 1990s, the author had many discussions of the theory of filtrations with J.~Feldman (see~\cite{15},~\cite{16})
and B.~Tsirelson (see, in particular,~\cite{17}).

Most recently, especially after the subject has been combined with the theory of graded graphs,  the theory of filtrations and the notion of standardness attracted much interest from S.~Laurent, T.~de~la~Rue, and others; they obtained many new results mentioned in the survey (see~\cite{38, 39, 40}, \cite{25}). Finally, I have repeatedly discussed the subject with my students A.~Gorbulsky, F.~Petrov, P.~Zatitskiy, and others, which, hopefully, will lead to the discovery of new facts and a~new understanding of this interesting and important subject. The author thanks all people mentioned above and  those he may have forgotten to mention. Special thanks are due to A.\,A.~Lodkin for his help with formatting the bibliography, and to A\,M.~Minabutdinov for the figures.


\begin{thebibliography}{100}

\bibitem{1}
D.~Z. Arov.
\newblock To the history of the appearance of the notion of the
  $\varepsilon$-entropy of an authomorphism of a~{L}ebesgue space and
  $(\varepsilon,{T})$-entropy of a~dynamical system with continuous time.
\newblock {\em J.~Math. Sci. (N.~Y.)}, 215(6):677--692, 2016.

\bibitem{2}
R.~M. Belinskaya.
\newblock Partitions of {L}ebesgue space in trajectories defined by ergodic
  automorphisms.
\newblock {\em Funct. Anal. Appl.}, 2(3):190--199, 1968.

\bibitem{3}
A.~Connes, J.~Feldman, and B.~Weiss.
\newblock An amenable equivalence relation is generated by single
  transformation.
\newblock {\em Ergodic Theory Dynam. Systems}, 1(4):431--450, 1981.

\bibitem{4}
F.~den Hollander and J.~E. Steif.
\newblock Random walk in random scenery: a~survey of some recent results.
\newblock In {\em Dynamics and Stochastics, IMS Lecture Notes Monogr. Ser.},
  vol.~48, pp.~53--65. Inst. Math. Statist., Beachwood, OH, 2006.

\bibitem{5}
R.~L. Dobrushin.
\newblock Perturbation methods of the theory of {G}ibbsian fields.
\newblock {\em Lecture Notes in Math.}, 1648:1--66, 1996.

\bibitem{6}
R.~Dougherty, S.~Jackson, and A.~S. Kechris.
\newblock The structure of hyperfinite {B}orel equivalence relations.
\newblock {\em Trans. Amer. Math. Soc.}, 341(1):193--225, 1994.

\bibitem{7}
L.~Dubins, J.~Feldman, M.~Smorodinsky, and B.~Tsirelson.
\newblock Decreasing sequences of $\sigma$-fields and a~measure change for
  {B}rownian motion.
\newblock {\em Ann. Probab.}, 24(2):882--904, 1996.

\bibitem{8}
H.~A. Dye.
\newblock On groups of measure preserving transformations.~{I}.
\newblock {\em Amer. J.~Math.}, 81:119--159, 1959.

\bibitem{9}
E.~B. Dynkin.
\newblock Boundary theory of {M}arkov processes (the discrete case).
\newblock {\em Russian Math. Surveys}, 24(2):1--42, 1969.

\bibitem{10}
E.~B. Dynkin.
\newblock The space of exits of a~{M}arkov process.
\newblock {\em Russian Math. Surveys}, 24(4):89--157, 1969.

\bibitem{11}
E.~B. Dynkin.
\newblock Entrance and exit spaces for a~{M}arkov process.
\newblock In {\em Actes du Congr\`es International des Math\'ematiciens, Nice,
  1970}, vol.~2, pp.~507--512. Gauthier-Villars, Paris, 1971.

\bibitem{12}
M.~\'Emery.
\newblock Espace probabilis\'es filtr\'es: de la th\'eorie de {V}ershik au
  mouvement brownien, via des id\'ees de {T}sirelson.
\newblock In {\em S\'eminaire Bourbaki, 2000/2001}, Exp.\ No.~282, pp.~vii,
  63--83. Soc. Math. France, Paris, 2002.

\bibitem{13}
M.~\'Emery and W.~Schachermayer.
\newblock Brownian filtrations are not stable under equivalent time-changes.
\newblock {\em Lecture Notes in Math.}, 1709:267--276, 1999.

\bibitem{14}
M.~\'Emery and W.~Schachermayer.
\newblock On {V}ershik's standardness criterion and {T}sirelson's notion of
  cosiness.
\newblock {\em Lecture Notes in Math.}, 1755:265--305, 2001.

\bibitem{15}
J.~Feldman and C.~C. Moore.
\newblock Ergodic equivalence relations, cohomology, and von {N}eumann
  algebras.
\newblock {\em Bull. Amer. Math. Soc.}, 81(5):921--924, 1975.

\bibitem{16}
J.~Feldman and D.~J. Rudolph.
\newblock Standardness of sequences of $\sigma$-fields given by certain
  endomorphisms.
\newblock {\em Fund. Math.}, 157(2-3):175--189, 1998.

\bibitem{17}
J.~Feldman and B.~Tsirelson.
\newblock Decreasing sequences of sigma-fields and a~measure change for
  {B}rownian motion.~{II}.
\newblock {\em Ann. Probab.}, 24(2):905--911, 1996.

\bibitem{18}
F.~M. Goodman, P.~de~la Harpe, and V.~F.~R. Jones.
\newblock {\em Coxeter Graphs and Towers of Algebras}.
\newblock Springer-Verlag, New York, 1989.

\bibitem{19}
M.~Gromov.
\newblock {\em Metric Structures for Riemannian and Non-Riemannian Spaces}.
\newblock Birkh\"auser Boston, Inc., Boston, MA, 1999.

\bibitem{20}
B.~M. Gurevich.
\newblock Toward the history of dynamical entropy: comparing two definitions.
\newblock {\em J.~Math. Sci. (N.~Y.)}, 215(6):693--699, 2016.

\bibitem{21}
O.~V. Guseva.
\newblock Classification of sequences of measurable partitions.
\newblock {\em Vestnik Leningrad. Univ.}, 20(1):14--23, 1965.

\bibitem{22}
D.~Heicklen, C.~Hoffman, and D.~J. Rudolph.
\newblock Entropy and dyadic equivalence of random walks on a~random scenery.
\newblock {\em Adv. Math.}, 156(2):157--179, 2000.

\bibitem{23}
C.~Hoffman and D.~J. Rudolph.
\newblock A~dyadic endomorphism which is {B}ernoulli but not standard.
\newblock {\em Israel J.~Math.}, 130:365--379, 2002.

\bibitem{24}
\'E. Janvresse, T.~de~la Rue, and Y.~Velenik.
\newblock Self-similar corrections to the ergodic \mbox{theorem} for the
  {P}ascal-adic transformation.
\newblock {\em Stoch. Dyn.}, 5(1):1--25, 2005.

\bibitem{25}
\'E. Janvresse, S.~Laurent, and T.~de~la Rue.
\newblock Standardness of monotonic {M}arkov filtrations.
\newblock {\tt arXiv:1501.02166}, 2015.

\bibitem{26}
V.~A. Kaimanovich and A.~M. Vershik.
\newblock Random walks on discrete groups: boundary and entropy.
\newblock {\em Ann. Probab.}, 11(3):457--490, 1983.

\bibitem{27}
S.~A. Kalikow.
\newblock ${T}$,~${T}^{-1}$ transformation is not loosely {B}ernoulli.
\newblock {\em Ann. of Math.~(2)}, 115(2):393--409, 1982.

\bibitem{29}
A.~Katok and B.~Hasselblatt.
\newblock {\em Intro\-duction to the Modern Theory of Dynamical Systems}.
\newblock Cambridge Univ. Press, Cambridge, 1995.

\bibitem{28}
A.~B. Katok.
\newblock Monotone equivalence in ergodic theory.
\newblock {\em Math. USSR-Izv.}, 11(1):99--146, 1977.

\bibitem{30}
S.~V. Kerov.
\newblock Combinatorial examples in the theory of ${A}{F}$-algebras.
\newblock {\em J.~Soviet Math.}, 59(5):1063--1071, 1992.

\bibitem{31}
S.~V. Kerov.
\newblock Generalized {H}all--{L}ittlewood symmetric functions and orthogonal
  polynomials.
\newblock {\em Adv. Soviet Math.}, 9:67--94, 1992.

\bibitem{32}
S.~V. Kerov.
\newblock {\em Asymptotic Representation Theory of the Symmetric Group and its
  Appli\-cations in Analysis}, with a~foreword by A.~M.~Vershik.
\newblock Amer. Math. Soc., Providence, RI, 2003.

\bibitem{33}
A.~N. Kolmogorov.
\newblock {\em Foundations of the Theory of Probability}, 2nd ed.
\newblock Chelsea Publishing Co., New York, 1956.

\bibitem{34}
A.~N. Kolmogorov.
\newblock A new metric invariant of transient dynamical systems and
  automorphisms in {L}ebesgue spaces.
\newblock {\em Dokl. Akad. Nauk SSSR (N.~S.)}, 119(5):861--864, 1958.

\bibitem{35}
A.~N. Kolmogorov.
\newblock Entropy per unit time as a metric invariant of automorphisms.
\newblock {\em Dokl. Akad. Nauk SSSR}, 124(4):754--755, 1959.

\bibitem{88}
I.~P. Kornfeld and A.~M. Vershik.
\newblock General ergodic theory of transformation groups with invariant
  measure. {C}hapter~4. {P}eriodic approximations and their applications.
  {E}rgodic theorems, spectral and entropy theory for general group actions.
\newblock In {\em Dynamical Systems~{I}{I}. Ergodic Theory with Applications to
  Dynamical Systems and Statistical Mechanics}, vol.~2, pp.~59--77.
  Springer, Berlin, 1989.

\bibitem{36}
R.~Koteck\'y and D.~Preiss.
\newblock The use of projective limits in classical statistical mechanics and
  {E}uclidean quantum field theory.
\newblock {\em Czechoslovak J.~Phys.~B}, 30(1):23--32, 1980.

\bibitem{39}
S.~Laurent.
\newblock On {V}ershikian and {I}-cosy random variables and filtrations.
\newblock {\em Theory Probab. Appl.}, 55(1):54--76, 2010.

\bibitem{38}
S.~Laurent.
\newblock On standardness and {I}-cosiness.
\newblock {\em Lecture Notes in Math.}, 2006:127--186, 2011.

\bibitem{40}
S.~Laurent.
\newblock Standardness and nonstandardness of next-jump time filtrations.
\newblock {\em Electron. Commun. Probab.}, 18(56), 2013.

\bibitem{37}
A.~A. Lodkin and A.~M. Vershik.
\newblock Approximation for actions of amenable groups and transversal
  automorphisms.
\newblock {\em Lecture Notes in Math.}, 1132:331--346, 1985.

\bibitem{41}
X.~M\'ela and K.~Petersen.
\newblock Dynamical properties of the {P}ascal adic transformation.
\newblock {\em Ergodic Theory Dynam. Systems}, 25(1):227--256, 2005.

\bibitem{42}
D.~S. Ornstein.
\newblock {\em Ergodic Theory, Randomness, and Dynamical Systems}.
\newblock Yale Univ. Press, New Haven, CT -- London, 1974.

\bibitem{43}
D.~S. Ornstein and B.~Weiss.
\newblock Ergodic theory of amenable actions. {I}.~{T}he {R}ohlin lemma.
\newblock {\em Bull. Amer. Math. Soc. (N.~S.)}, 2(1):161--164, 1980.

\bibitem{44}
D.~S. Ornstein and B.~Weiss.
\newblock Entropy and isomorphism theorems for actions of amenable groups.
\newblock {\em J.~Anal. Math.}, 48:1--141, 1987.

\bibitem{45}
V.~A. Rokhlin.
\newblock {\em On the Fundamental Ideas of Measure Theory}.
\newblock Amer. Math. Soc., Providence, RI, 1952.

\bibitem{46}
V.~A. Rokhlin.
\newblock Metric classification of measurable functions.
\newblock {\em Uspekhi Mat. Nauk}, 12(2(74)):169--174, 1957.

\bibitem{47}
V.~A. Rokhlin.
\newblock Lectures on the entropy theory of measure-preserving transformations.
\newblock {\em Russian Math. Surveys}, 22(5):1--52, 1967.

\bibitem{48}
V.~A. Rokhlin.
\newblock {\em Selected works. Appendix: Reminiscences of V. A. Rokhlin}.
\newblock MCCME, Moscow, 2010.

\bibitem{49}
M.~Rosenblatt.
\newblock {\em Markov Processes. Structure and Asymptotic Behavior}.
\newblock Springer-Verlag, New York--Heidelberg, 1971.

\bibitem{50}
B.~A. Rubshtein.
\newblock On decreasing sequences of measurable partitions.
\newblock {\em Soviet Math. Dokl.}, 13:962--965, 1972.

\bibitem{51}
B.-Z. Rubshtein.
\newblock Lacunary isomorphism of decreasing sequences of measurable
  partitions.
\newblock {\em Israel J.~Math.}, 97:317--345, 1997.

\bibitem{52}
K.~Schmidt.
\newblock A~probabilistic proof of ergodic decomposition.
\newblock {\em Sankhy\=a Ser.~A}, 40(1):10--18, 1978.

\bibitem{53}
Ya.~G. Sinai.
\newblock On the concept of entropy for a dynamic system.
\newblock {\em Dokl. Akad. Nauk SSSR}, 124(4):768--771, 1959.

\bibitem{54}
R.~P. Stanley.
\newblock {\em Ordered Structures and Partitions}.
\newblock Amer. Math. Soc., Providence, RI, 1972.

\bibitem{55}
R.~P. Stanley.
\newblock {\em Enumerative Combinatorics}, Vol.~2.
\newblock Cambridge Univ. Press, Cambridge, 1999.

\bibitem{56}
A.~M. Stepin.
\newblock On entropy invariants of decreasing sequences of measurable
  partitions.
\newblock {\em Funct. Anal. Appl.}, 5(3):237--240, 1971.

\bibitem{57}
N.~Tsilevich, A.~Vershik, and M.~Yor.
\newblock An infinite-dimensional analogue of the {L}ebesgue measure and
  distinguished properties of the gamma process.
\newblock {\em J.~Funct. Anal.}, 185(1):274--296, 2001.

\bibitem{58}
A.~M. Vershik.
\newblock Theorem on lacunary isomorphisms of monotonic sequences of
  partitions.
\newblock {\em Funct. Anal. Appl.}, 2(3):200--203, 1968.

\bibitem{59}
A.~M. Vershik.
\newblock Decreasing sequences of measurable partitions and their applications.
\newblock {\em Soviet Math. Dokl.}, 11:1007--1011, 1970.

\bibitem{62}
A.~M. Vershik.
\newblock Continuum of pairwise nonisomorphic dyadic sequences.
\newblock {\em Funct. Anal. Appl.}, 5(3):182--184, 1971.

\bibitem{60}
A.~M. Vershik.
\newblock Nonmeasurable decompositions, orbit theory, algebras of operators.
\newblock {\em Soviet Math. Dokl.}, 12:1218--1222, 1971.

\bibitem{63}
A.~M. Vershik.
\newblock The structure of tame partitions.
\newblock {\em Uspekhi Mat. Nauk}, 27(3(165)):195--196, 1972.

\bibitem{65}
A.~M. Vershik.
\newblock Approximation in measure theory.
\newblock Doctor of Science Thesis, Leningrad State University, Leningrad,
  1973.

\bibitem{64}
A.~M. Vershik.
\newblock Four definitions of the scale of an automorphism.
\newblock {\em Funct. Anal. Appl.}, 7(3):169--181, 1973.

\bibitem{66}
A.~M. Vershik.
\newblock On {D}.~{O}rnstein's papers, weak dependence conditions and classes
  of stationary measures.
\newblock {\em Theory Probab. Appl.}, 21:655--657, 1977.

\bibitem{67}
A.~M. Vershik.
\newblock Uniform algebraic approximation of shift and multiplication
  operators.
\newblock {\em Soviet Math. Dokl.}, 24(1):97--100, 1981.

\bibitem{68}
A.~M. Vershik.
\newblock A~theorem on the {M}arkov periodic approximation in ergodic theory.
\newblock {\em J.~Soviet Math.}, 28(5):667--674, 1985.

\bibitem{69}
A.~M. Vershik.
\newblock Theory of decreasing sequences of measurable partitions.
\newblock {\em St. Petersburg Math.~J.}, 6(4):705--761, 1995.

\bibitem{70}
A.~M. Vershik.
\newblock Classification of measurable functions of several variables and
  invariantly distributed random matrices.
\newblock {\em Funct. Anal. Appl.}, 36(2):93--105, 2002.

\bibitem{71}
A.~M. Vershik.
\newblock Random and universal metric spaces.
\newblock In {\em Fundamental Mathematics Today}, pp.~54--88. MCCME, Moscow,
  2003.

\bibitem{72}
A.~M. Vershik.
\newblock Random metric spaces and universality.
\newblock {\em Russian Math. Surveys}, 59(2):259--295, 2004.

\bibitem{74}
A.~M. Vershik.
\newblock Dynamics of metrics in measure spaces and their asymptotics
  invariants.
\newblock {\em Markov Process. Related Fields}, 16(1):169--184, 2010.

\bibitem{73}
A.~M. Vershik.
\newblock Information, entropy, dynamics.
\newblock In {\em Mathematics of the {X}{X}th Century: A View from
  {S}t.~{P}etersburg}, pp.~47--76. MCCME, Moscow, 2010.

\bibitem{75}
A.~M. Vershik.
\newblock The {P}ascal auto\-morphism has a~continuous spectrum.
\newblock {\em Funct. Anal. Appl.}, 45(3):173--186, 2011.

\bibitem{76}
A.~M. Vershik.
\newblock On classification of measurable functions of several variables.
\newblock {\em J.~Math. Sci. (N.~Y.)}, 190(3):427--437, 2013.

\bibitem{77}
A.~M. Vershik.
\newblock The problem of describing central measures on the path spaces of
  graded graphs.
\newblock {\em Funct. Anal. Appl.}, 48(4):256--271, 2014.

\bibitem{78}
A.~M. Vershik.
\newblock Equipped graded graphs, projective limits of simplices, and their
  boundaries.
\newblock {\em J.~Math. Sci. (N.~Y.)}, 209:860--873, 2015.

\bibitem{80}
A.~M. Vershik.
\newblock Several remarks on {P}ascal automorphism and infinite ergodic theory.
\newblock {\em Armen. J.~Math.}, 7(2):85--96, 2015.

\bibitem{79}
A.~M. Vershik.
\newblock Standardness as an invariant formulation of independence.
\newblock {\em Funct. Anal. Appl.}, 49(4):253--263, 2015.

\bibitem{82}
A.~M. Vershik.
\newblock Asymptotic theory of path spaces of graded graphs and its
  applications.
\newblock {\em Jpn. J.~Math.}, 11(2):151--218, 2016.

\bibitem{81}
A.~M. Vershik.
\newblock Smoothness and standardness in the theory of ${A}{F}$-algebras and in
  the problem on invariant measures.
\newblock In {\em Probability and Statistical Physics in St.~Petersburg},
  pp.~423--436. Amer. Math. Soc., Providence, RI, 2016.

\bibitem{83}
A.~M. Vershik and A.~L. Fedorov.
\newblock Trajectory theory.
\newblock {\em J.~Soviet Math.}, 38(2):1799--1822, 1987.

\bibitem{84}
A.~M. Vershik and A.~D. Gorbulskii.
\newblock Scaled entropy of filtrations of $\sigma$-fields.
\newblock {\em Theory Probab. Appl.}, 52(3):493--508, 2008.

\bibitem{85}
A.~M. Vershik and U.~Hab\"ock.
\newblock On the classification problem of measurable functions in several
  variables and on matrix distributions.
\newblock {\em J.~Math. Sci. (N.~Y.)}, 219(5):683--699, 2016.

\bibitem{87}
A.~M. Vershik and S.~V. Kerov.
\newblock Asymptotic of the largest and the typical dimensions of irreducible
  representations of a~symmetric group.
\newblock {\em Funct. Anal. Appl.}, 19(1):21--31, 1985.

\bibitem{86}
A.~M. Vershik and S.~V. Kerov.
\newblock Locally semisimple algebras. combinatorial theory and the
  ${K}_0$-functor.
\newblock {\em J.~Soviet Math.}, 38(2):1701--1733, 1987.

\bibitem{89}
A.~M. Vershik and A.~N. Livshits.
\newblock Adic models of ergodic transformations, spectral theory,
  substitutions, and related topics.
\newblock {\em Adv. Soviet Math.}, 9:185--204, 1992.

\bibitem{90}
A.~M. Vershik and A.~V. Malyutin.
\newblock Phase transition in the exit boundary problem for random walks on
  groups.
\newblock {\em Funct. Anal. Appl.}, 49(2):86--96, 2015.

\bibitem{91}
A.~M. Vershik and P.~B. Zatitskii.
\newblock Universal adic approximation, invariant measures, and scaled entropy.
\newblock {\em Izv. RAN. Ser. Mat.}, 2017.

\bibitem{92}
A.~M. Vershik, P.~B. Zatitskii, and F.~V. Petrov.
\newblock Geometry and dynamics of admissible metrics in measure spaces.
\newblock {\em Cent. Eur. J.~Math.}, 11(3):379--400, 2013.

\bibitem{93}
A.~M. Vershik, P.~B. Zatitskii, and F.~V. Petrov.
\newblock Virtual continuity of measurable functions and its applications.
\newblock {\em Russian Math. Surveys}, 69(6):1031--1063, 2014.

\bibitem{94}
V.~G. Vinokurov and N.~N. Ganikhodzhaev.
\newblock Conditional functions in the trajectory theory of dynamical systems.
\newblock {\em Math. USSR-Izv.}, 13(2):221--252, 1979.

\bibitem{95}
V.~G. Vinokurov and B.~A. Rubshtein.
\newblock Extensions of decreasing sequences of measurable decompositions of a
  {L}ebesgue space.
\newblock In {\em Random Processes and Statistical Inference}, No.~2, pp.~49--61. Fan, Tashkent, 1972.

\bibitem{96}
J.~von Neumann.
\newblock Zur {O}peratorenmethode in der klassischen {M}echanik.
\newblock {\em Ann. of Math.~(2)}, 33(3):587--642, 1932.

\bibitem{97}
N.~Wiener.
\newblock {\em Nonlinear Problems in Random Theory}.
\newblock The Technology Press of The Massachusetts Institute of Technology and
  John Wiley {\&} Sons, Inc., New York; Chapman {\&} Hall, Ltd., London, 1958.

\bibitem{98}
G.~Winkler.
\newblock {\em Choquet Order and Simplices with Applications in Probabilistic
  Models}.
\newblock Springer-Verlag, Berlin, 1985.

\bibitem{99}
S.~A. Yuzvinskii.
\newblock Distinction of ${K}$~automorphisms by the scale.
\newblock {\em Funct. Anal. Appl.}, 7(4):312--316, 1973.

\bibitem{100}
P.~B. Zatitskii.
\newblock On a~scaling entropy sequence of a~dynamical system.
\newblock {\em Funct. Anal. Appl.}, 48(4):291--294, 2014.

\bibitem{101}
P.~B. Zatitskii.
\newblock Scaling entropy sequence: invariance and examples.
\newblock {\em J.~Math. Sci. (N.~Y.)}, 209(6):890--909, 2015.

\bibitem{61}
P.~B. Zatitskii and F.~V. Petrov.
\newblock Correction of metrics.
\newblock {\em J.~Math. Sci. (N.~Y.)}, 181(6):867--870, 2012.

\end{thebibliography}
\end{document}